\documentclass[preprint,12pt,sort&compress]{elsarticle}

\usepackage[utf8]{inputenc}
\usepackage{a4,amssymb,amsmath,amsthm}

\usepackage{enumitem}
\usepackage{float}

\usepackage[colorlinks=true]{hyperref}

\theoremstyle{plain}
\newtheorem{theorem}{Theorem}[section]
\newtheorem{lemma}[theorem]{Lemma}
\newtheorem{proposition}[theorem]{Proposition}

\newtheorem{remark}[theorem]{Remark}

\theoremstyle{definition}

\numberwithin{equation}{section}

\def\dhp#1{\mathop {#1}\limits_{+h}}
\def\dhm#1{\mathop{#1}\limits_{-h}}
\def\dtp#1{\mathop {#1}\limits_{+\tau}}
\def\dtm#1{\mathop {#1}\limits_{-\tau}}

\newcommand{\MongeAmp}{\mathcal{M}}

\makeatother

\begin{document}
\begin{frontmatter}



\title{Polynomial Difference Schemes with Energy and Momentum Multipliers on Orthogonal Uniform Meshes: Stencil Restrictions and Monge--Amp\`ere-Type Examples}

\author[SUTaddress]{E.~I.~Kaptsov\corref{correspondingauthor}}

\ead{evgkaptsov@math.sut.ac.th}
\cortext[correspondingauthor]{Corresponding author}

\address[SUTaddress]{School of Mathematical Sciences and Geoinformatics, Institute of Science, \\
Suranaree University of Technology, Nakhon Ratchasima, 30000, Thailand}

\date{\today}

\begin{abstract}
Experience with invariant finite-difference schemes for one-dimensional
hydro\-dynamic-type equations for a function $u(t,x)$ suggests that nonlinear
polynomial schemes on orthogonal uniform meshes with the 9-point stencil cannot
simultaneously preserve difference analogues of the energy and momentum
conservation laws, with the multipliers given by the standard difference
approximations of $u_t$ and $u_x$. For the 9-point stencil this hypothesis is
proved for polynomials of arbitrary degree. On larger stencils it is
investigated by means of the difference analogue of the direct method: for
polynomials of bounded degree, such schemes are found to exist only on
sufficiently large stencils, and the second-order equations they approximate
are of Monge--Amp\`ere type. As a by-product, a scheme for the homogeneous
Monge--Amp\`ere equation is obtained that has five conservation laws, admits
most of the symmetries of the differential equation, and is invariant under the
discrete dihedral group $D_4$. Numerical tests on exact solutions confirm its
second order of approximation and show that the conservation laws are
satisfied on these solutions with the same order of accuracy.
\end{abstract}

\begin{keyword}
finite-difference schemes \sep
polynomial schemes \sep
direct method \sep
conservation-law multipliers \sep
Monge--Amp\`ere equation
\end{keyword}

\end{frontmatter}

\section{Introduction}
\label{sec:Intro}

Over the past years, a number of finite-difference schemes for one-dimensional hydrodynamic-type equations have been constructed within the research group led by V.A.~Dorodnitsyn, including schemes for various shallow water models in Lagrangian coordinates~\cite{DorodnLagrangianShallowWater2020,DorodnShallowWater2021,DorodnGreenNaghdi2020,KaptsovModifiedShallowWater2022,KaptsovInvariantSMHD2023,CheviakovInvariantWaves2020}. Particular attention in these studies has been devoted to invariant conservative schemes. We will return to these schemes in Section~\ref{sec:DirectMethod}, as they provide the main foundation for the present study.

The construction of invariant schemes is based on the symmetry properties of the underlying differential equations, which are systematically studied within the framework of group analysis~\cite{OvsiannikovGroupAnalysis1982,OlverLieGroups1993,BlumanAncoBook2002}. In the context of finite-difference equations, this approach leads to invariant difference schemes that preserve the symmetry properties of the corresponding differential models~\cite{DorodnBook2011,LeviWintYamilovBook2023}. Such schemes may also possess difference analogues of the conservation laws of the original models, i.e., they may be conservative schemes. The latter, however, does not hold for arbitrary invariant schemes:~an invariant scheme constructed without additional considerations often possesses only some of the conservation laws, or may fail to preserve any of them (see discussion in~\cite{KaptsovInvariantSMHD2023}).
Additional methods are therefore required for constructing \emph{conservative} invariant schemes.
Such constructions are usually based on different approaches to deriving conservation laws~\cite{OlverLieGroups1993,BlumanAncoBook2002,NazReviewCLs2008}.
Conservative schemes are also constructed without reference to symmetries, for example by discrete-gradient and discrete variational derivative methods~\cite{Gonzalez1996,FuruihataMatsuo2011}.

A classical approach is provided by Noether's theorem~\cite{Noether1918,IbragimovNoetherExt1969}, which makes it possible, given the variational symmetries and the Lagrangian, to derive conservation laws of the corresponding Euler--Lagrange equations. In the finite-difference case, this approach is sufficiently well developed for ordinary difference schemes~\cite{DorodnDiscreteLagrangians2004,DorodnBook2011}, although its application generally requires additional nontrivial work to identify a suitable invariant mesh equation~\cite{DorodnDiscreteLagrangians2004,DorodnDiscrODE2013,DorodnErmakov2016}. Several approaches to discrete analogues of Noether's theorem and to the application of Lagrangians to finite-difference equations have been developed~\cite{DorodnitsynFiniteDifferenceNoether1993,DorodnitsynNoetherDifference2001,DorodnBook2011,HydonVariationalComplex2004,MansfieldNoetherDifference2019,WhiteMovingFramesDifference2024,MarsdenWest2001}. However, the application of these approaches to partial difference equations appears to remain relatively limited.

Another approach to the construction of conservation laws is provided by the adjoint equation method~\cite{BlumanAncoBook2002} and its difference analogue~\cite{WinternitzAdjoint2015}\footnote{See also references therein.}. In the difference case, its main difficulty lies in finding solutions of the difference adjoint equation, which can be a nontrivial task even for relatively simple schemes. Moreover, for schemes based on partial differential equations, this approach remains largely undeveloped.

In this respect, the difference analogue of the direct method~\cite{AncoBluman1997,BlumanAncoBook2002} is currently more productive.
In principle, the direct method makes it possible to find all local conservation laws of a given system of equations by solving the determining equations obtained from the fact that the variational derivative (the Euler operator) of a divergence expression vanishes identically.
The difference analogue of this approach was apparently first used in~\cite{CheviakovInvariantWaves2020,DorodnLagrangianShallowWater2020}.
It is also worth noting that a closely related approach had been considered earlier in~\cite{GrantMultipleConservation2015} in the context of the KdV equation.
The former approach~\cite{CheviakovInvariantWaves2020,DorodnLagrangianShallowWater2020} made it possible to construct invariant finite-difference schemes preserving conservation laws for one-dimensional linear and nonlinear wave equations~\cite{CheviakovInvariantWaves2020} and shallow water equations in~Lagrangian and~Eulerian coordinates~\cite{DorodnLagrangianShallowWater2020,DorodnShallowWater2021}.

\medskip

One of the significant limitations of the direct method for difference equations is the necessity of using orthogonal meshes in the construction of such schemes. This is due to the fact that the difference analogue of the Euler operator~\cite{KuperschmidtDiscreteLax1985,DorodnNetSpaces1991,DorodnBook2011} is known only for orthogonal meshes. Moreover, following~\cite{DorodnLagrangianShallowWater2020,DorodnShallowWater2021,DorodnGreenNaghdi2020,KaptsovModifiedShallowWater2022,KaptsovInvariantSMHD2023,CheviakovInvariantWaves2020}, we restrict the analysis to \emph{uniform} orthogonal meshes, i.e., meshes with constant difference steps.


\smallskip

In most of the cases mentioned above~\cite{DorodnLagrangianShallowWater2020,DorodnShallowWater2021,DorodnGreenNaghdi2020,KaptsovModifiedShallowWater2022,KaptsovInvariantSMHD2023,CheviakovInvariantWaves2020},
the calculations were performed on the 9-point stencil, which is standard for
constructing schemes for second-order equations. This is because most of the
models considered were written in Lagrangian coordinates, where a
hydrodynamic-type system is usually reduced to a single scalar equation for
the function $u(t,x)$. The order of such an equation is higher than that of
the original first-order Eulerian system and, as a rule, even, so that the
equation can be derived from a Lagrangian. This is one of the reasons why the
Lagrangian description is convenient for constructing conservative schemes, in
addition to being a natural framework for formulating physical
laws~\cite{SedovContMech1994}.

Equations of hydrodynamic type in Lagrangian coordinates possess another remarkable property:~as a rule, their admitted symmetries do not violate the conditions for preserving the orthogonality of the mesh~\cite{DorodnNetSpaces1991,DorodnBook2011}. In general, point transformations corresponding to many symmetries (for example, the Lorentz transformation in relativistic mechanics) distort orthogonal meshes and do not allow invariant schemes to be constructed on such meshes. In the case of hydro\-dynamic-type equations, the admitted Lie algebras usually consist of symmetry generators that preserve orthogonal uniform meshes, thus making these meshes particularly suitable for the construction of invariant schemes. Accordingly, the problem of constructing conservative difference schemes is substantially simplified:~the difference Euler operator and the difference analogue of the direct method can be applied.
A detailed discussion of this and other related approaches to the construction of schemes for equations of hydrodynamic type can be found in~\cite{KaptsovMMIM2024}.

\medskip

The present work is mainly devoted to a phenomenon encountered in the construction of such invariant conservative schemes.
In the course of constructing such schemes for hydrodynamic-type equations~\cite{DorodnLagrangianShallowWater2020,DorodnShallowWater2021,DorodnGreenNaghdi2020,KaptsovModifiedShallowWater2022,KaptsovInvariantSMHD2023}, the following observation was made:
\begin{quote}
\textit{For a function \(u(t,x)\) of two independent variables, \(t\) and \(x\), defined
on a standard 9-point stencil with an orthogonal uniform mesh, it does not
seem possible to construct nonlinear polynomial schemes that simultaneously
preserve the conservation laws of energy and momentum associated with the
generators $\frac{\partial}{\partial{t}}$ and
$\frac{\partial}{\partial{x}}$, respectively.}
\end{quote}
Although some considerations concerning this issue in Lagrangian coordinates
have been given in~\cite{BihloInvariantShallowWater2012}, the general problem of
constructing such schemes remains open.

The question is of interest beyond the specific models considered here. For
wave-type equations and for Lagrangian hydrodynamics, the energy and momentum
conservation laws are among the properties most desirable for a scheme to
inherit, and polynomial schemes on relatively small stencils
are the simplest candidates for this role.
A trade-off between invariants is known in other settings: for Hamiltonian ODEs without additional integrals, a symplectic integrator that conserves energy reproduces the exact flow up to a time reparametrization~\cite{GeMarsden1988}.
If the observation above is correct, polynomial schemes on the 9-point stencil
cannot preserve both laws (energy and momentum) at once, so in constructing such schemes one has to
give up one of them, enlarge the stencil, or turn to a different class of
schemes. The last option includes, in particular, schemes whose construction is aided by specially chosen equations of state or auxiliary difference relations~\cite{Korobitsyn1989,Kozlov2019}; such schemes are not considered here.

The present work investigates how far this restriction extends and
under what conditions it can be circumvented.
The difference analogue of the direct method is used as a systematic search tool,
which also yields new conservative schemes as
by-products, and examples of such schemes are constructed on extended stencils.
Before that, however, we briefly discuss the application of the direct method to
linear and nonlinear equations and describe the problem in more detail.

\bigskip

The remainder of the paper is organized as follows.
Section~\ref{sec:DirectMethod} introduces the main concepts and notation,
discusses the application of the difference analogue of the direct method to
the construction of schemes on orthogonal uniform meshes, and reviews earlier
attempts to construct schemes admitting both the energy and momentum
conservation laws, which led to the hypothesis stated above.
Section~\ref{sec:Classif} contains the main results of the paper. Various stencils and polynomial schemes defined on them are analyzed, leading to a number of propositions. In parallel, several conservative schemes with interesting properties are obtained, including a conservative scheme for the Monge--Amp\`ere equation. Section~\ref{sec:Monge} discusses this scheme in greater detail from the viewpoint of its symmetry properties and verifies its order of approximation and difference conservation laws numerically. Finally, Section~\ref{sec:Conclusion} summarizes the main results and presents concluding remarks.

Technical details are collected in the appendices: \ref{app:Proof3x3} contains the proof that nonlinear polynomial schemes with two centered multipliers do not exist on the $3\times3$ stencil for polynomials of arbitrary degree, and \ref{app:Rational} contains remarks on the application of the direct method to rational schemes.

\section{Direct method and its application to schemes on the 9-point stencil}
\label{sec:DirectMethod}

\subsection{Basic definitions and notation}

First, we recall some standard notions and notation.

For the dependent variable $u(t,x)$, the point symmetry generator of a Lie group is written in the standard form~\cite{OvsiannikovGroupAnalysis1982}:
\begin{equation}\label{GenX}
X = \xi^t \frac{\partial}{\partial t}
    + \xi^x \frac{\partial}{\partial x}
    + \eta \frac{\partial}{\partial u},
\end{equation}
where $\xi^t$, $\xi^x$, and $\eta$ are functions of $t$, $x$, and $u$.

In this work, we abstract, whenever possible, from the physical interpretation of these variables. For definiteness, $t$ may be regarded as time and $x$ as a spatial coordinate, as is natural for most of the equations considered below. When a different physical interpretation is required, as, for example, in the case of Monge--Amp\`ere equations, $t$ and $x$ are treated simply as independent coordinates.
\smallskip

As discussed in the previous section, we restrict our consideration to orthogonal uniform meshes.
A generator preserves an orthogonal uniform mesh if the corresponding group
transformation maps the mesh into a mesh of the same type: the coordinate
lines remain mutually orthogonal, and the distances between neighboring nodes
along each coordinate direction remain constant. In terms of the generator,
this gives the following conditions of mesh uniformity and orthogonality,
respectively~\cite{DorodnNetSpaces1991,DorodnBook2011}:
\begin{equation}\label{mesh_conds_uni}
\dhp{D}\dhm{D}(\xi^x) = 0,
\qquad
\dtp{D}\dtm{D}(\xi^t) = 0,
\end{equation}
\begin{equation}\label{mesh_conds_ortho}
\dhp{D}(\xi^t) = -\dtp{D}(\xi^x).
\end{equation}
Here $\underset{\pm\tau}{D}$ and $\underset{\pm h}{D}$ are finite-difference differentiation operators:
\[
\underset{+\tau}{D} = \frac{\underset{+\tau}{S} - 1}{t_{n+1} - t_n},
\quad
\underset{-\tau}{D} = \frac{1 - \underset{-\tau}{S}}{t_n - t_{n-1}},
\quad
\underset{+h}{D} = \frac{\underset{+h}{S} - 1}{x_{m+1} - x_m},
\quad
\underset{-h}{D} = \frac{1 - \underset{-h}{S}}{x_m - x_{m-1}},
\]
which are defined through the finite-difference shift operators $\underset{\pm\tau}{S}$, $\underset{\pm{h}}{S}$. These operators act on functions of the mesh variables as follows:
\[
\def\arraystretch{1.5}
\begin{array}{c}
\displaystyle
\underset{\pm\tau}{S}
\bigl(f(t_n,x_m,u^n_m)\bigr)
=
f(t_{n\pm1},x_m,u^{n\pm1}_m),
\\
\displaystyle
\underset{\pm h}{S}
\bigl(f(t_n,x_m,u^n_m)\bigr)
=
f(t_n,x_{m\pm1},u^{n}_{m\pm1}).
\end{array}
\]

The indices $n$ and $m$ increase along the $t$- and $x$-axes, respectively. In what follows, we use a 9-point stencil,
\[
\bigl(
t_n,t_{n-1},t_{n+1};
x_m,x_{m-1},x_{m+1};
u^n_m,u^n_{m-1},u^n_{m+1},
u^{n-1}_m,u^{n-1}_{m-1},u^{n-1}_{m+1},
u^{n+1}_m,u^{n+1}_{m-1},u^{n+1}_{m+1}
\bigr),
\]
schematically shown in Figure~\ref{fig:9pt}.

\begin{figure}[ht]
\centering
\includegraphics[width=0.45\linewidth]{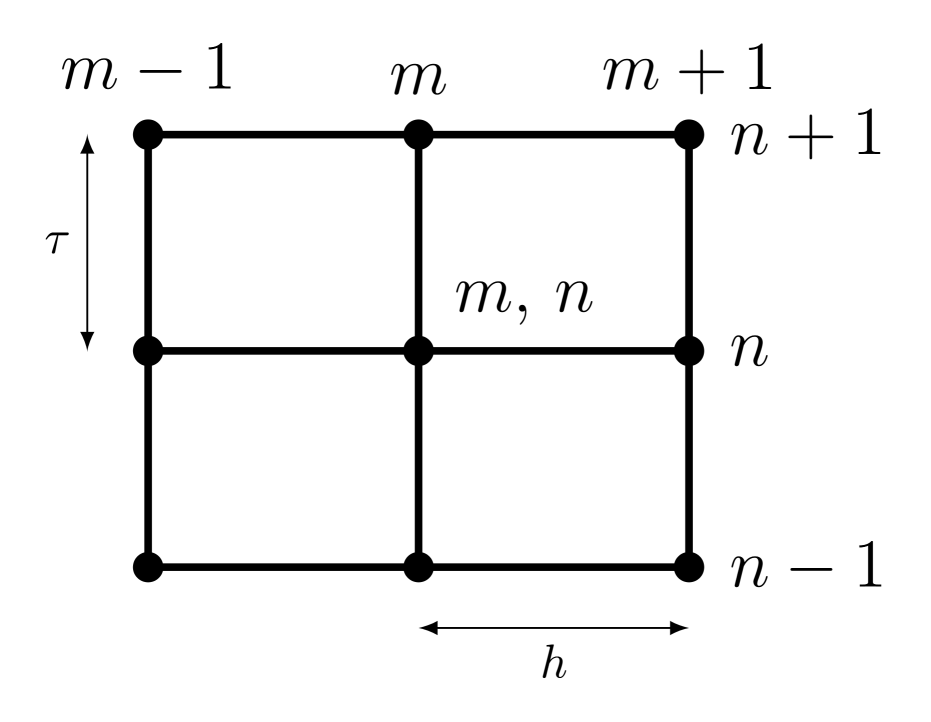}
\caption{9-point stencil for an orthogonal uniform mesh.}
\label{fig:9pt}
\end{figure}

\medskip

The prolongation of the point symmetry generator~(\ref{GenX}) to the finite stencil is obtained by applying the shift operators to its coefficients:
\[
\operatorname{pr} X
=
\sum_{k,l} {\dtp{S}}^{k}
{\dhp{S}}^{l} X
=
\sum_{k,l}
\left[
{\dtp{S}}^{k}{\dhp{S}}^{l}(\xi^t)
\frac{\partial}{\partial t_{n+k}}
+
{\dtp{S}}^{k}{\dhp{S}}^{l}(\xi^x)
\frac{\partial}{\partial x_{m+l}}
+
{\dtp{S}}^{k}{\dhp{S}}^{l}(\eta)
\frac{\partial}{\partial u^{n+k}_{m+l}}
\right],
\]
where the summation extends over all points of the stencil. In particular,
\[
{\dtp{S}}^{k}{\dhp{S}}^{l}(\xi^t)
=
\xi^t(t_{n+k},x_{m+l},u^{n+k}_{m+l}),
\]
and similarly for $\xi^x$ and $\eta$.

A difference equation $\Phi=0$ is invariant under a generator $X$ if the difference prolongation of $X$ satisfies the infinitesimal invariance criterion
\begin{equation}\label{DiffInv}
\left.\operatorname{pr} X(\Phi)\right|_{\Phi=0}=0.
\end{equation}
If the left-hand side vanishes identically, without imposing the equation $\Phi=0$, the function $\Phi$ is said to be \emph{strictly invariant}:
\begin{equation}\label{StrictDiffInv}
\operatorname{pr} X(\Phi)\equiv0.
\end{equation}
For a difference scheme consisting of the difference equation together with the mesh equations, the same invariance criterion is applied to all equations of the scheme.

\medskip

For brevity, throughout this section we sometimes use the Samarskii notation~\cite{SamarskyPopovBook1992}:
\[
\underset{+\tau}{S}(f) = \hat{f},
\qquad
\underset{-\tau}{S}(f) = \check{f},
\qquad
\underset{+h}{S}(f) = f^+,
\qquad
\underset{-h}{S}(f) = f^-,
\]
\[
u_t = \underset{+\tau}{D}(u),
\qquad
\check{u}_{t} = \underset{-\tau}{D}(u),
\qquad
\check{u}_{tt} = \underset{+\tau}{D}\underset{-\tau}{D}(u),
\]
\[
u_x = \underset{+h}{D}(u),
\qquad
u_{\bar x} = \underset{-h}{D}(u),
\qquad
u_{x\bar x} = \underset{+h}{D}\underset{-h}{D}(u),
\]
and so on, whenever it is clear from the context that these symbols denote finite-difference expressions rather than partial derivatives.

\medskip

For a uniform mesh, the steps $\tau$ and $h$ are positive constants:
\begin{equation}\label{UniMesh}
t_{n+k+1} - t_{n+k} = \tau = \mathrm{const},
\qquad
x_{m+l+1} - x_{m+l} = h = \mathrm{const},
\qquad
k,l\in\mathbb{Z}.
\end{equation}
Accordingly, it is convenient to absorb the constant mesh steps into the components of a discrete divergence. In what follows, a difference conservation law is written in the form
\begin{equation}\label{DiffCLGenForm}
\bigl(F\bigr)_m^{n+1}-\bigl(F\bigr)_m^n
+
\bigl(G\bigr)_{m+1}^n-\bigl(G\bigr)_m^n,
\end{equation}
where $F$ is the conserved density and $G$ is the flux in the $x$-direction. We say that this expression is a \emph{difference conservation law} of a difference equation if it vanishes on all solutions of the equation. Both $F$ and $G$ are assumed to be functions of the mesh variables defined on a finite stencil.

\begin{remark}
This form is equivalent to the more conventional divergence expression
\[
\underset{+\tau}{D}(\widetilde F)
+
\underset{+h}{D}(\widetilde G).
\]
Indeed, multiplication by $\tau h$ gives
\[
h\bigl(\widetilde F_m^{n+1}-\widetilde F_m^n\bigr)
+
\tau\bigl(\widetilde G_{m+1}^n-\widetilde G_m^n\bigr),
\]
which has the above form after the rescaling
\[
F=h\widetilde F,
\qquad
G=\tau\widetilde G.
\]
Thus, since the mesh steps are constant, they can be absorbed into the conserved density and flux. This convention simplifies the notation and calculations below. It should be noted, however, that this simplification applies only to the constant factors arising from the finite-difference operators. If a multiplier or another expression depends explicitly on $t_n$ or $x_m$, the dependence on the mesh steps must still be taken into account through~(\ref{UniMesh}).
\end{remark}

\medskip

A finite-difference scheme in its complete form includes not only the approximation of the differential equation but also explicitly specified mesh equations, usually one for each independent variable~\cite{Bakirova1997,DorodnBook2011}. In what follows, the mesh equations will be omitted, as all of them will have the simplest form~(\ref{UniMesh}). However, in general, an explicit specification of the mesh equations is essential, as all basic operations on a finite-difference scheme, including the verification of its conservation laws, are performed with the mesh equations taken into account.

\medskip

The difference Euler operator~\cite{KuperschmidtDiscreteLax1985,DorodnNetSpaces1991} provides a convenient criterion for recognizing divergence expressions and for constructing conservation-law multipliers. Let $P$ be a polynomial in the mesh variables $u^{n+k}_{m+l}$, whose coefficients may depend explicitly on $t_n$ and $x_m$. The difference Euler operator is defined as
\begin{equation} \label{EulerOpDelta}
\mathcal{E}_u
=
\sum_{k,l}
{\dtm{S}}^{k}
{\dhm{S}}^{l}
\frac{\partial}{\partial u^{n+k}_{m+l}}.
\end{equation}
On an orthogonal uniform mesh, $P$ is a divergence expression if and only if
\[
\mathcal{E}_u P\equiv 0.
\]
Consequently, a function $\Lambda$ is a \emph{conservation law multiplier} for a difference equation $\Phi=0$ if
\begin{equation}\label{DirectMethodDeltaMainIdentity}
\mathcal{E}_u(\Lambda\Phi)\equiv 0.
\end{equation}
Such a multiplier corresponds to a difference conservation law of the form~(\ref{DiffCLGenForm}), i.e., $\Lambda\Phi$ is a total difference divergence.

\subsection{Incompatibility of energy and momentum conservation laws for nonlinear schemes}

For finite-difference schemes on orthogonal uniform meshes, the direct method reduces to the application of the  identity~(\ref{DirectMethodDeltaMainIdentity}). It is important to note that this identity holds in this form precisely when the difference steps are assumed to be constant. A significant difference between the differential and difference cases is the following. In the differential case, the differential equation is usually given, and only the conservation-law multipliers are to be determined. In the difference case, however, the scheme itself is often unknown from the outset. Thus, two unknown functions have to be determined simultaneously: the approximation of the differential equation and the multiplier of a conservation law (or, more generally, a linear combination of all possible multipliers).

Since this problem is usually too difficult to solve in full generality, suitable ansatzes have to be introduced. The simplest choices are polynomial and rational functions~\cite{DorodnLagrangianShallowWater2020,DorodnShallowWater2021,DorodnGreenNaghdi2020,KaptsovModifiedShallowWater2022,KaptsovInvariantSMHD2023,CheviakovInvariantWaves2020,KaptsovMMIM2024}. Examples involving more complicated ansatzes can be found, for instance, in~\cite{DorodnShallowWater2021}.

\medskip

In~\cite{CheviakovInvariantWaves2020}, conservative finite-difference schemes were constructed by means of the direct method for the linear wave equation
\begin{equation}\label{LinWaveEq}
u_{tt} - u_{xx} = 0
\end{equation}
and for a nonlinear wave equation of the special form
\begin{equation}\label{NonLinWaveEq}
u_{tt} - (1 + u_{x}^2) u_{xx} = 0.
\end{equation}
Both equations admit the generators
\[
X_1 = \frac{\partial}{\partial{t}},
\qquad
X_2 = \frac{\partial}{\partial{x}},
\]
which, according to Noether's theorem, correspond to the conservation laws of energy and momentum, respectively. The corresponding conservation-law multipliers are $u_t$ and $u_x$.

For the construction of finite-difference schemes, natural approximations to these multipliers, up to constant factors, are
\begin{equation}\label{Lam1Lam2}
\Lambda_1 = u^{n+1}_m-u^{n-1}_m,
\qquad
\Lambda_2 = u^{n}_{m+1}-u^{n}_{m-1}.
\end{equation}

It is straightforward to verify that these are conservation-law multipliers for the simplest linear scheme on the 9-point stencil,
\[
\check{u}_{tt} - u_{x\bar{x}} = 0,
\]
for which the corresponding difference analogues of the energy and momentum conservation laws were constructed~\cite{CheviakovInvariantWaves2020}.

For the nonlinear equation~(\ref{NonLinWaveEq}), the situation becomes considerably more complicated. No reasonable polynomial ansatz on the 9-point stencil was found that would yield a scheme possessing both multipliers~(\ref{Lam1Lam2}) simultaneously. In~\cite{CheviakovInvariantWaves2020}, a classification of polynomial schemes according to the number of conservation laws was carried out, but no class of schemes possessing both the energy and momentum conservation laws, and hence admitting both multipliers, was obtained.

The simplest scheme for~(\ref{NonLinWaveEq}) possessing the largest number of conservation laws was found to be
\[
\check{u}_{tt} - u_{x\bar{x}} - \frac{1}{6} \dhm{D}\left( u_x^2 \, (\hat{u}_x + \check{u}_x)\right) = 0,
\]
which, however, does not possess the momentum conservation law.

Moreover, alternative approximations to the multipliers $u_t$ and $u_x$ did not lead to fundamentally new results either for equation~(\ref{NonLinWaveEq}) or for the equations considered below. Therefore, in the present work, the analysis is mainly restricted to multipliers of the form~(\ref{Lam1Lam2}).

\medskip

Invariant conservative finite-difference schemes for various one-dimensional shallow-water equations in Lagrangian coordinates were constructed in~\cite{DorodnLagrangianShallowWater2020,DorodnShallowWater2021,DorodnGreenNaghdi2020,KaptsovModifiedShallowWater2022,KaptsovInvariantSMHD2023}. All these models can be written in the form
\begin{equation} \label{SWModelsDiff}
u_{tt} - \frac{u_{xx}}{u_x^3} + \Psi(u_x, u_{xx}, u_{tx}, u_{ttx}) + H(u) = 0,
\end{equation}
where $H(u)$ characterizes the bottom profile and $\Psi$ is a rational function of its arguments. For hyperbolic shallow-water equations, $\Psi$ vanishes identically. It is worth noting that, in all the cases considered, when $H(u)=\textrm{const}$, the last equation can be written in divergence form. This implies that the equation admits the conservation-law multiplier $1$, corresponding to the translation generator
$\frac{\partial}{\partial{u}}$.

Using rational-function ansatzes for~(\ref{SWModelsDiff}), invariant conservative finite-dif\-fer\-ence schemes of the form
\[
\check{u}_{tt}
    + \frac{1}{h} \left(
    (\hat{u}_x \check{u}_x)^{-1}
    - (\hat{u}_{\bar{x}} \check{u}_{\bar{x}})^{-1}
    + \widetilde{\Psi} - \widetilde{\Psi}^-
  \right) + \widetilde{H}(u) = 0,
\]
were constructed, where $\widetilde{H}$ and $\widetilde{\Psi}$ approximate $H$ and $\Psi$, respectively, on the 9-point stencil. When $\widetilde{H}(u)=\textrm{const}$, this scheme, as in the differential case, takes a divergence form and admits translations in $u$ and the conservation-law multiplier~$1$.

As in the case of the scheme for equation~(\ref{NonLinWaveEq}), no ansatz was found that would allow both multipliers~(\ref{Lam1Lam2}) to be admitted simultaneously.

A similar problem arises for polynomial schemes for the shallow-water equations in Eulerian coordinates~\cite{DorodnShallowWater2021}.

These results suggest the observation formulated in Section~\ref{sec:Intro} concerning the apparent impossibility of constructing nonlinear polynomial (and, possibly, rational) schemes on the 9-point stencil that simultaneously preserve the conservation laws of energy and momentum.

\medskip

One clarification concerning the notion of momentum conservation used here is necessary. Throughout this paper, the term \emph{conservation law of momentum} refers exclusively to the conservation law associated with the generator
\[
\frac{\partial}{\partial{x}}
\]
and the conservation-law multiplier $u_x$. In Lagrangian coordinates, however, the conservation law associated with translations in $u$,
\[
\frac{\partial}{\partial{u}},
\]
is also commonly interpreted as a conservation law of momentum. In this sense, all the Lagrangian models considered above possess a momentum conservation law. While this interpretation may be sufficient from the physical point of view, it is not sufficient for the algebraic purpose considered here.

A possible explanation for this distinction was given in~\cite{BihloInvariantShallowWater2012}. For the shallow-water equations in Lagrangian coordinates, the momentum equation itself cannot be expressed solely in terms of differential invariants, but only in combination with the continuity equation. Consequently, its direct approximation in terms of difference invariants is not natural. This observation may provide a possible explanation for the incompatibility found above. However, the same argument does not necessarily extend to more general settings, such as the nonlinear wave equation~(\ref{NonLinWaveEq}) or the shallow-water equations in Eulerian coordinates, and therefore does not by itself explain the phenomenon in the general case.

In principle, the direct method applies to rational schemes as well. However, already on the 9-point stencil, and even more so on extended stencils, the calculations become prohibitively cumbersome to perform by hand, and the use of computer algebra systems is usually necessary. At the time when the schemes~\cite{DorodnLagrangianShallowWater2020,DorodnShallowWater2021,DorodnGreenNaghdi2020,KaptsovModifiedShallowWater2022,KaptsovInvariantSMHD2023,CheviakovInvariantWaves2020} were constructed, the available computational resources imposed substantial limitations, making it impractical to systematically examine all possible cases, particularly for extended stencils larger than $3\times3$. In preparing the present study, it became possible to consider substantially larger stencils and to perform a systematic analysis for several of them. This was achieved for polynomial schemes of relatively low degree, for which the calculations arising from the difference analogue of the direct method can be performed within a reasonable amount of time and memory. For rational functions, in contrast, the number of terms and the orders of the difference monomials appearing in the resulting expressions grow rapidly after applying the difference Euler operator, even when the numerator and denominator have relatively low degrees~\cite{KaptsovMMIM2024}. Therefore, the analysis below is restricted to polynomial ansatzes. Under certain additional assumptions, however, the problem for rational schemes can sometimes be simplified; a few remarks on this are given in~\ref{app:Rational}. They are not used in the subsequent sections and may be skipped.

The following sections focus on polynomial schemes and examine in greater detail the restrictions on schemes admitting the multipliers~(\ref{Lam1Lam2}) on the 9-point and extended stencils.

\section{Stencil size and classes of schemes with two specified difference multipliers}
\label{sec:Classif}

The construction of schemes with two difference multipliers leads to a system of algebraic conditions whose structure depends strongly on the stencil and on the form of the scheme and multipliers. The difference analogue of the direct method provides a systematic way to derive these conditions, while computer algebra makes it possible to investigate them for increasingly broad classes of polynomial schemes. This allows several classes of stencils and schemes to be analyzed and a number of structural restrictions to be identified.

The analysis is carried out for polynomial finite-difference schemes of finite order on orthogonal uniform meshes for partial differential equations with two independent variables, $t$ and $x$. Particular attention is paid to the following classes of stencils:

\begin{itemize}
\item Schemes on the 9-point $3\times3$ stencil.

\item Schemes on extended stencils. As will be shown, nontrivial results can already be obtained on a $5\times5$ stencil.

\item Schemes on narrow stencils, i.e., extended asymmetric stencils such as the $2\times6$ or $3\times7$ stencils.
\end{itemize}

We also impose several additional restrictions on the schemes and their multipliers:

\begin{itemize}
\item
Centered multipliers of the form (\ref{Lam1Lam2}), corresponding to the most natural centered approximations.

\item
Non-centered multipliers approximating the differential multipliers $u_t$ and $u_x$ to first order.

\item
Schemes that additionally admit the generator $\frac{\partial}{\partial u}$ and possess a conservation law with the~multiplier $\Lambda=1$ (a divergence form). This requirement is not imposed in the classification below, so the counts of schemes include those that do not have this property. It is used only as a criterion for singling out schemes of particular interest, such as the scheme~(\ref{MAScheme}) of Section~\ref{sec:Monge}.
\end{itemize}

\bigskip

We apply the direct method to the classification problem for schemes
admitting the difference analogues of the multipliers $u_t$ and $u_x$.
Let the scheme be a single relation
\begin{equation}\label{schemeAnsatz}
\Phi\bigl(u_{m+k}^{n+j}\bigr)=0, \qquad (j,k)\in \Omega,
\end{equation}
where the stencil $\Omega$ is a rectangular block of
$N^t\times N^x$ nodes, and $\Phi$ is a polynomial in the values of $u$ at the nodes of $\Omega$ with
constant coefficients.
Following the continuous case, we fix the \emph{centered} multipliers
\begin{equation}\label{LamCentered}
\Lambda_1=u_m^{n+1}-u_m^{n-1}\approx 2\tau u_t,\qquad
\Lambda_2=u_{m+1}^{n}-u_{m-1}^{n}\approx 2h u_x,
\end{equation}
which are symmetric with respect to the node $(t_n,x_m)$.
The unknowns of the classification problem are the coefficients of $\Phi$.

\medskip

Scheme~(\ref{schemeAnsatz}) admits a multiplier $\Lambda$ if and only if
$\Lambda\Phi$ is a total difference divergence on the mesh, i.e.,
$\Lambda\Phi=(\dhm{S}-1)F+(\dtm{S}-1)G$ for some functions
$F$, $G$ of finitely many nodal values of $u$.
This condition is equivalent to identity~(\ref{DirectMethodDeltaMainIdentity}),
which must hold for arbitrary values of $u$ at all mesh nodes.

We look for schemes admitting both multipliers $\Lambda_1$ and $\Lambda_2$.
The multipliers are fixed, so the unknowns are only the coefficients of the polynomial $\Phi$,
and $\Phi$ enters~(\ref{DirectMethodDeltaMainIdentity}) linearly.
Requiring~(\ref{DirectMethodDeltaMainIdentity}) for $\Lambda=\Lambda_1$ and $\Lambda=\Lambda_2$
therefore gives a system of linear homogeneous equations for these coefficients.
The system is overdetermined: there are many more equations than unknowns.

The system splits into independent smaller systems according to the degree of the scheme.
Indeed, call a polynomial \emph{homogeneous of degree~$s$} if all its monomials have the same total degree~$s$
in the nodal variables $u_{m+k}^{n+j}$; for example, $u_m^{n+1}u_m^{n-1}-(u_{m+1}^{n})^2$ is homogeneous of degree~$2$.
Any polynomial $\Phi$ is a sum $\Phi=\sum_s\Phi_s$ of its homogeneous parts $\Phi_s$.
Both multipliers are linear in $u$ and $\Phi$ has constant coefficients,
so the part of degree~$s$ of the left-hand side of~(\ref{DirectMethodDeltaMainIdentity})
is determined by $\Phi_s$ alone. Since a polynomial vanishes identically only if each
of its homogeneous parts vanishes, every $\Phi_s$ must satisfy~(\ref{DirectMethodDeltaMainIdentity})
by itself. It is therefore sufficient to solve the problem separately for each degree~$s$.

For a fixed stencil and a fixed degree $s$ we proceed as follows.
\begin{enumerate}
\item Write $\Phi_s$ as a sum of all possible monomials of degree $s$ in the nodal variables of the stencil,
each with an unknown coefficient.
\item Substitute $\Phi_s$ into the left-hand side of~(\ref{DirectMethodDeltaMainIdentity}) for $\Lambda=\Lambda_1$ and $\Lambda=\Lambda_2$
and expand. The result is a polynomial in the values of $u$ at the mesh nodes
that is linear in the unknown coefficients. Since the identity must hold for arbitrary values of $u$,
the coefficient at every monomial must vanish. Each such condition is a linear
equation in the unknown coefficients of $\Phi_s$.
\item Solve the resulting system of linear equations and find all its nonzero solutions.
Each nonzero solution is a scheme admitting both multipliers; if the system has only the zero solution,
there is no scheme of degree $s$ on this stencil.
\end{enumerate}

Solutions of degree $s=1$ are linear schemes, which are not of interest here;
a solution of degree $s\geqslant 2$ is a genuinely nonlinear scheme.
Note that a scheme $\Phi=0$ is defined up to a constant factor, so the solutions are considered up to such a factor.

The result depends strongly on the stencil: a larger stencil gives more
unknown coefficients, but also more equations. For a fixed stencil and a fixed
degree, the procedure above is a finite computation, so its outcome is either
a complete list of schemes or a proof that none exist. However, the cost of
the computation grows rapidly with both the stencil size and the degree, so it
is feasible only in a limited range, which is specified in the propositions
below. The results obtained in this way hold within these bounds and do not
exclude nonlinear schemes of higher degree or on larger stencils.
For the $3\times3$ stencil, however, the restriction on the degree is removed
by a separate argument given in~\ref{app:Proof3x3}.
Following this procedure, we first examine the smallest square stencils.
It turns out that already on the $3\times3$ and $4\times4$ stencils the
determining system has no nonzero solutions of degree $s\geqslant 2$
(for the $4\times4$ stencil, in the examined range). The smallest square stencil
on which nonlinear schemes appear is of size $5\times5$.

\subsection{Centered multipliers}

Here, we consider schemes admitting the centered conservation-law multipliers~(\ref{Lam1Lam2}).
We begin with the 9-point stencil.

\begin{proposition}\label{prop:33}
There are no nonlinear polynomial schemes on the $3\times3$ stencil
that admit both multipliers~(\ref{Lam1Lam2}).
\end{proposition}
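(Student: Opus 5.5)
Since every homogeneous part $\Phi_s$ of $\Phi$ must satisfy~(\ref{DirectMethodDeltaMainIdentity}) separately for both multipliers, I will fix an arbitrary degree $s\geqslant 2$ and show that $\Phi_s\equiv 0$. Instead of computing $\mathcal{E}_u(\Lambda\Phi_s)$ directly, I will use an equivalent criterion suited to constant-coefficient polynomials on the lattice $\mathbb{Z}^2$.

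\emph{Criterion.} A polynomial $P$ in the nodal values with constant coefficients is a total difference divergence if and only if, for every orbit of monomials under the shifts $\dtp{S},\dhp{S}$, the sum of the coefficients of $P$ over the monomials of that orbit vanishes. Necessity holds because $(\dtm{S}-1)F+(\dhm{S}-1)G$ has zero coefficient sum on each orbit. Sufficiency holds because $u^\alpha-S(u^\alpha)$ is a divergence, so each orbit sum can be moved to a single representative. This is equivalent to $\mathcal{E}_u P\equiv0$. With it, the determining system becomes: for every degree-$(s+1)$ monomial class $[M]$,
\[
\sum_{u^\alpha u_{(1,0)}\in[M]} c_\alpha-\sum_{u^\alpha u_{(-1,0)}\in[M]} c_\alpha=0,
\qquad
\sum_{u^\alpha u_{(0,1)}\in[M]} c_\alpha-\sum_{u^\alpha u_{(0,-1)}\in[M]} c_\alpha=0 .
\]
Here $c_\alpha$ are the coefficients of $\Phi_s$ and $\alpha$ ranges over multi-indices supported in $\{-1,0,1\}^2$.

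\emph{Extremal classes.} The main step is to choose classes $[M]$ in which few terms occur, so that individual coefficients are forced to vanish. Each class has a canonical representative with support normalized so that its minimum $t$- and $x$-indices are $0$. I will record its extent, i.e.\ its bounding box, and the multiplicities at the extreme nodes. For a monomial $u^\alpha$ whose support already has $t$-extent $3$ (it meets both rows $n\pm1$), the products with $u_{(\pm1,0)}$ keep the same extent. However, they add one to the multiplicity in the top or bottom row respectively. Comparing classes by (extent, multiplicity profile of the extreme rows and columns) gives a partial order. I will argue by downward induction on this order: in a maximal class, only one or two coefficients can appear, and the relation forces them to vanish or to be equal up to sign. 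Combining the $\Lambda_1$-relations (which act on rows) with the $\Lambda_2$-relations (which act on columns) is expected to kill every monomial whose support is not a single node. In that case $\Phi_s$ would reduce to $\sum_j a_j (u_j)^s$, a sum of pure powers. For $s\geqslant2$, the relation from $\Lambda_1$ on the class of $(u_{(0,0)})^s u_{(1,0)}$ then shows directly that all $a_j$ must vanish. A sanity check is that for $s=1$ the same procedure must leave the linear wave scheme $\check u_{tt}-u_{x\bar x}$ alive, which calibrates where degree $s\geqslant2$ enters: only for $s\geqslant2$ can a monomial carry repeated or extra factors that break the linear cancellation pattern.

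\emph{Expected obstacle.} The hardest part is the bookkeeping in the induction. Different monomials of $\Phi_s$ become translation-equivalent after multiplication by $u_{(\pm1,0)}$ or $u_{(0,\pm1)}$, so a single class can collect several unknowns. I must show that the chain of relations never closes into a nontrivial cycle for $s\geqslant2$. My first attempt will be to treat degree $s$ generically. I will do this by describing a monomial as a multiset of at most $s$ points in the $3\times3$ box and tracking how its multiset changes under adding a point at distance one from the centre. I will then show that the graph connecting unknowns through shared classes, after removing classes with a single contributor, is a forest whose leaves are forced to zero. If this generic argument becomes unwieldy, I will fall back on verifying the low degrees $s=2,3$ explicitly (by the finite procedure described above). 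For larger $s$, I will use a reduction that factors out a common node power to decrease $s$.
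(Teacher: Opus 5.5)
Your reduction to a homogeneous $\Phi_s$ with $s\geqslant2$ and your orbit-sum criterion are the same as the paper's Step~0 and Lemma~\ref{lem:divcrit}. However, the step that carries the whole content of the statement is only announced (``is expected to kill every monomial whose support is not a single node''): you never show that the orbit equations force every coefficient of $\Phi_s$ to vanish. Your partial order by extent and multiplicity profile is never defined precisely. Moreover, knowing that a maximal class contains at most two coefficients only yields equalities $\kappa(N/\hat u)=\kappa(N/\check u)$. Whether these equalities close into cycles is exactly the question, and you leave it open. The fallback does not cover arbitrary degree. Checking $s=2,3$ is a finite verification (the paper already reports one up to degree $14$). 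The reduction that ``factors out a common node power'' is not justified: $\Phi_s$ has no common nodal factor in general, and division by a fixed nodal value does not commute with shifts, so the orbit conditions are not inherited by the quotient.

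The intended order (first all multi-node monomials, then pure powers) also cannot be carried out as stated. For $s=2$, the coefficient of $\check u u_-$ enters only the orbit equations of $\hat u\check u u_-$, $\check u u_-u_+$, $\check u^2u_-$ and $\check u u_-^2$. Computed over all shifts lying in the stencil, these give $\kappa(\check u u_-)=\kappa(\hat u u_-)=\kappa(\check u u_+)$, $\kappa(\check u u_-)=\kappa(u_+^2)$ and $\kappa(\check u u_-)=\kappa(\hat u^2)$. Hence the diagonal pairs $\xi\eta$, $\xi\in\{\check u,\hat u\}$, $\eta\in\{u_-,u_+\}$, can only be killed after some pure powers have been killed, so the two kinds of monomials must be handled in an interleaved order. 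There is also a smaller slip in your endgame: the single class of $u_{(0,0)}^s u_{(1,0)}$ gives only $a_{(0,0)}=0$. To kill all $a_j$ you need the whole family of classes $[u_{(0,0)}^s u_d]$.

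What is missing is a concrete mechanism that terminates the chains, and the paper supplies one.
\begin{itemize}
\item A monomial whose variables meet the first and third rows and the first and third columns is the only member of its orbit inside the stencil (it is \emph{rigid}). Its orbit equation therefore has exactly two terms.
\item The chain lemma (Lemma~\ref{lem:chain}): replacing the factors $\check u$ of $M$ by $\hat u$ one at a time keeps the products $\hat uM_i$ rigid. The last product is not divisible by $\check u$, so the last equation has a single term, which forces $\kappa(M)=0$ and rules out cycles.
\end{itemize}
With this mechanism the paper proceeds in a specific order:
\begin{itemize}
\item corner monomials not contained in the corner's cell;
\item corner monomials inside a cell, via a three-term non-rigid orbit together with the symmetries $t\leftrightarrow x$ and $(t,x)\to(-t,-x)$;
\item cross monomials containing two collinear cross nodes;
\item pure powers and $u^s$;
\item finally the $s=2$ diagonal pairs, via the orbit of $\check u^2u_-$.
\end{itemize}
Your proposal needs an argument of this kind before it proves the statement for every degree.
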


The proof, valid for polynomials of arbitrary degree
(Theorem~\ref{th:33all} in~\ref{app:Proof3x3}), is rather lengthy and requires some
auxiliary constructions, so it is placed in the Appendix, together with the necessary theory,
for the reader interested in the details; further we simply refer to this result.

\begin{remark}
For degrees not exceeding~$14$, Proposition~\ref{prop:33} was also verified by direct computations
according to the algorithm described above.
\end{remark}

For larger stencils, direct computations carried out according to the algorithm described above lead to the following results.

\begin{proposition}\label{prop:44}
There are no nonlinear polynomial schemes of degree $\leqslant 5$ on the $4\times4$ stencil
that admit both multipliers~(\ref{Lam1Lam2}).
\end{proposition}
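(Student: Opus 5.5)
Proposition~\ref{prop:44} is a finite, bounded-degree statement, so I would prove it by the algorithm of Section~\ref{sec:Classif}, organised to keep the linear systems manageable and the outcome verifiable. By the splitting into homogeneous parts, it suffices to show that for each $s=2,3,4,5$ the only homogeneous polynomial $\Phi_s$ of degree~$s$ in the $16$ nodal values of the $4\times4$ stencil satisfying~(\ref{DirectMethodDeltaMainIdentity}) for both $\Lambda_1$ and $\Lambda_2$ of~(\ref{Lam1Lam2}) is $\Phi_s\equiv0$. The case $s=1$ consists of the linear schemes and is excluded by the statement.

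First I fix the placement of the stencil relative to the multipliers. Since $\Lambda_1,\Lambda_2$ are centered at $(n,m)$ and a $4\times4$ block cannot be centered, I take offsets $j,k\in\{-1,0,1,2\}$. The other placements are related to this one by the reflections $t\mapsto-t$ and $x\mapsto-x$. These reflections map each $\Lambda_i$ to $-\Lambda_i$ and commute with $\mathcal{E}_u$ up to shifts, so it is enough to treat $\{-1,0,1,2\}^2$.

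Before computing I reduce the unknowns. Adding a total divergence to $\Phi_s$ does not preserve the multiplier property, so this freedom cannot be quotiented out. Instead I use linearity: the solution space is invariant under the two reflections (each composed with a compensating shift), so it decomposes into even and odd parts under each reflection. I can therefore solve four smaller systems, one for each parity pattern.

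For each $s$ and each parity block I proceed as follows:
\begin{enumerate}
\item Write the generic $\Phi_s$.
\item Form $\mathcal{E}_u(\Lambda_i\Phi_s)$, a finite sum of shifted partial derivatives.
\item Extract the coefficients of all monomials.
\item Compute the rank of the resulting integer matrix by exact rational (or modular, then lifted) elimination, and check that it has full column rank.
\end{enumerate}
The number of monomials is $\binom{15+s}{s}$, which is $15504$ for $s=5$. This is large but tractable for sparse exact linear algebra. Each $\partial/\partial u^{n+k}_{m+l}$ only involves monomials containing that variable, and the shifts spread them over a $7\times7$ window, so the matrices are very sparse.

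As a cross-check, and a possible conceptual shortcut, I would reuse the structure of Theorem~\ref{th:33all}. The condition for $\Lambda_1$ forces strong constraints on how $\Phi_s$ depends on the extreme time layers. Applying the leading-order (extremal-shift) terms of $\mathcal{E}_u$ to $\Lambda_i\Phi_s$ isolates the derivatives of $\Phi_s$ with respect to the corner nodes. I expect this to show that $\Phi_s$ is at most linear in the boundary rows and columns, which would prune most unknowns before the rank computation.

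The main obstacle is computational: exact elimination on systems with tens of thousands of unknowns and more equations, for $s=5$. I would first attempt the rank computation modulo several large primes. Full rank modulo one prime already certifies full rank over $\mathbb{Q}$, hence the absence of nonzero solutions, so no rational lifting is needed for a negative result. That is exactly what the proposition asserts.
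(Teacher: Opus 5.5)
Your overall route is the paper's: Proposition~\ref{prop:44} is established there only by the direct computation of Section~\ref{sec:Classif}, i.e., splitting into homogeneous parts, a generic $\Phi_s$ on the block, and a linear system for each $s\leqslant5$. Your placement reduction and your certificate via full column rank modulo a prime are sound.

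\textbf{The step that fails is the four-block parity reduction.} The reflections you use ($t\mapsto-t$ or $x\mapsto-x$ followed by the compensating shift) are the reflections of the block $\{-1,0,1,2\}^2$ about its midlines $n+\tfrac12$ and $m+\tfrac12$. They move the node $(n,m)$, at which $\Lambda_1$ and $\Lambda_2$ are centered, to $(n+1,m)$ or $(n,m+1)$.

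The multiplier property is not shift-invariant: $\Lambda\,S\Phi$ is a divergence if and only if $(S^{-1}\Lambda)\Phi$ is. So these maps send the solution space for multipliers centered at $(n,m)$ onto the solution space for multipliers centered at another node of the block, not onto itself. This is exactly the absence of a central node that the paper stresses.

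A one-variable example already shows this. On $\{-1,0,1,2\}$ with $\Lambda=u_1-u_{-1}$, the solution $\Phi=u_1+u_{-1}$ is mapped by $j\mapsto1-j$ to $u_0+u_2$. But $(u_1-u_{-1})(u_0+u_2)$ has orbit sum $1$ on the orbit of $u_0u_1$, so it is not a divergence.

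Hence your constraint map $L$ is not equivariant under these reflections. Full column rank of $L$ on each parity subspace therefore does not imply that $L$ is injective: a solution $\Phi=\Phi_++\Phi_-$ with $L\Phi_+=-L\Phi_-\neq0$ is not excluded. As written, your computation would not prove the proposition.

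\textbf{The repair is immediate.} Either solve the full system for each $s$, as the paper does, or use only the symmetry that genuinely preserves the problem. That symmetry is the transposition $(j,k)\mapsto(k,j)$: it maps $\{-1,0,1,2\}^2$ onto itself, fixes $(n,m)$, interchanges $\Lambda_1$ and $\Lambda_2$, and maps divergences to divergences. It gives a valid splitting into two blocks, symmetric and antisymmetric under $t\leftrightarrow x$.

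Three minor points:
\begin{itemize}
\item $t\mapsto-t$ changes the sign of $\Lambda_1$ only and fixes $\Lambda_2$. This is harmless for the placement reduction.
\item Your expected pruning of the boundary rows and columns is unproved; the paper states a related expectation only in Remark~\ref{rem:proof4x4}. It may serve as a cross-check, but it must not be used to delete unknowns before the rank computation.
\item Working with the orbit sums of Lemma~\ref{lem:divcrit}, instead of the coefficients of $\mathcal{E}_u(\Lambda_i\Phi_s)$, would give a more compact system.
\end{itemize}
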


The absence of schemes on the $4\times4$ stencil may be related to the use of the centered multipliers~(\ref{Lam1Lam2}) on a non-centered stencil. In particular, the $4\times4$ stencil does not have a central node, whereas both multipliers are naturally centered with respect to the stencil.
For the extended $5\times5$ stencil, it is already possible to construct nonlinear polynomial schemes admitting both multipliers~(\ref{Lam1Lam2}). The following proposition describes the classes of schemes obtained.

\begin{proposition}\label{prop:55}
On the $5\times5$ stencil there exist nonlinear polynomial schemes of degree $d=2,3,4$
admitting both multipliers~(\ref{Lam1Lam2}). For these $d$, the equations approximated by such schemes are the following.
\begin{enumerate}[label=\roman*.]
\item
\emph{Second order.}
These are exactly the equations of Monge--Amp\`ere type,
\[
u^{(d-2)/3}\,\MongeAmp\bigl(u^{(d+1)/3}\bigr)=0,\qquad \MongeAmp(v)=v_{tt}v_{xx}-v_{tx}^2,
\]
together with the equations obtained from it by multiplication by $u_t^pu_x^q$ and a power of $u$,
with a simultaneous change of the exponent in the argument of~$\MongeAmp$:
\[
u^{(k-1)(p+q+1)}\,u_t^{\,p}u_x^{\,q}\,\MongeAmp\bigl(u^{k}\bigr)=0,\qquad
k=\frac{d+1}{p+q+3},\quad p,q\geqslant 0,\ p+q\leqslant d-2 .
\]
There are $1,3,6$ such equations for $d=2,3,4$, and no other equations of order at most two occur.

\item
\emph{Third order.}
For $d=2$ there are none. For $d=3,4$ they are  of the form
\[
3q\,\MongeAmp(u)+(u_tu_{xx}-u_xu_{tx})\,q_t+(u_xu_{tt}-u_tu_{tx})\,q_x = 0.
\]
For $d=3$ there are three equations: $q \in \{ u_{tt},\,u_{tx},\,u_{xx} \}$.
For $d=4$ there are fifteen: $q=fg$ with $f\in\{u_{tt},u_{tx},u_{xx}\}$, $g\in\{u,u_t,u_x\}$
(nine equations), and $q=ff^{\prime}$ with $f,f^{\prime}\in\{u_{tt},u_{tx},u_{xx}\}$ (six equations).

\item
\emph{Higher-order equations.}
In addition to the equations listed above, there are
$3, 56, 491$ linearly independent equations for $d=2,3,4$, respectively.
Their orders are all equal to $4$ for $d=2$, and range from $4$ to $8$ for $d=3,4$.
\end{enumerate}
\end{proposition}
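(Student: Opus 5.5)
The plan is to run the three-step algorithm of this section on the $5\times5$ stencil $\Omega=\{(j,k):|j|,|k|\leqslant 2\}$ for each degree $d=2,3,4$ separately. This is justified by the splitting into homogeneous parts $\Phi_s$. For fixed $d$ I will write $\Phi_d$ as a general linear combination of all degree-$d$ monomials in the $25$ nodal values. That gives $\binom{24+d}{d}$ unknowns, namely $325$, $2925$ and $20475$ for $d=2,3,4$.

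Before forming the system I will reduce it. A multiple of any total difference $(\dtm{S}-1)G$ or $(\dhm{S}-1)G$ is not automatically trivial, since $\Lambda$ is fixed. Instead, I will exploit the fact that the identity~(\ref{DirectMethodDeltaMainIdentity}) is invariant under the reflections $j\mapsto -j$ and $k\mapsto -k$ together with the swap of the roles of $t$ and $x$. Each of these maps $\{\Lambda_1,\Lambda_2\}$ to itself up to sign. The solution space therefore splits into $D_4$-isotypic components, which shrinks the linear algebra considerably.

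For each $\Lambda_i$ I will compute $\mathcal{E}_u(\Lambda_i\Phi_d)$ symbolically. Its monomials live on a $9\times9$ block, since the Euler operator shifts back by at most the stencil width plus the multiplier width. Collecting coefficients gives a sparse linear system over $\mathbb{Q}$, and I will compute an exact basis of its kernel by modular or fraction-free elimination. I will then discard the linear schemes, which have degree $1$ and do not arise here anyway. The remaining kernel is the space of nonlinear schemes, and its nonvanishing for $d=2,3,4$ proves existence.

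The classification by approximated equation comes next. For each kernel basis element I will substitute $u^{n+j}_{m+k}=u(t+j\tau,x+k h)$ with $\tau=h=\varepsilon$ and Taylor expand. The order of the equation is read off from the leading nonvanishing power of $\varepsilon$, and the leading coefficient is a differential polynomial. The approximation map is linear, so I will filter the kernel by the order of the leading term: first take the subspace whose leading part has order $\leqslant2$, then order $3$, then the rest.

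In the order $\leqslant2$ subspace I will check directly that the listed expressions $u^{(k-1)(p+q+1)}u_t^pu_x^q\MongeAmp(u^k)$ with $k=(d+1)/(p+q+3)$ are attained. Expanding shows these are homogeneous of degree $d$ in $u$ and its derivatives, of weight two in derivatives, and involve $\MongeAmp$-type second-order structure. I will also check that they span the leading-term image, giving dimensions $1,3,6$. This is the count of pairs $(p,q)$ with $p+q\leqslant d-2$. The same is done for order $3$ against the stated family $3q\MongeAmp(u)+(u_tu_{xx}-u_xu_{tx})q_t+(u_xu_{tt}-u_tu_{tx})q_x$. The remaining counts $3,56,491$ are quotient dimensions, and their orders $4$ to $8$ are read off from the leading terms.

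The main obstacle is making this filtration rigorous. Distinct kernel elements can share a leading term or cancel at leading order, so "number of equations" has to mean the dimension of the image of the leading-term map on each filtered piece. I will therefore compute that image by row-reducing the leading Taylor coefficients, as polynomials in jet variables, inside the kernel. For $d=4$ the system size is also a real computational difficulty. I would first attempt the $D_4$-symmetric reduction combined with modular kernel computation and rational reconstruction.
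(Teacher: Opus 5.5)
Your plan is the same as the paper's. Proposition~\ref{prop:55} is established there purely by running the algorithm of Section~\ref{sec:Classif} on the $5\times5$ stencil: take all degree-$d$ monomials, form the linear system from the two multiplier conditions, compute its kernel, and Taylor-expand the kernel elements to identify and count the approximated equations. Your $D_4$ isotypic splitting is only a computational shortcut.

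Two corrections. First, the differential order must be read from the leading coefficient itself, not from the power of $\varepsilon$: scheme~(\ref{MAScheme}) leads at $\tau^2h^2$ but approximates a second-order equation. Second, the paper writes leading terms as $\tau^ah^bF[u]$, and the listed equations are bi-homogeneous, e.g.\ $u_t\MongeAmp(u)$ at $\tau^3h^2$ and $u_x\MongeAmp(u)$ at $\tau^2h^3$. You should therefore keep $\tau$ and $h$ independent, or at least split each $\varepsilon^N$-coefficient by bi-weight, rather than merging them at $\tau=h$.
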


For $d\geqslant 5$ the question remains open.

\smallskip

The results of Proposition~\ref{prop:55} show that no hyperbolic second-order equations arise on the $5\times5$ stencil:~all second-order equations reduce to homogeneous equations of Monge--Amp\`ere type, which are parabolic. The third-order equations obtained are likewise closely related to the Monge--Amp\`ere equation.

As one sees from Proposition~\ref{prop:55}, a number of schemes for equations of Monge--Amp\`ere type have been obtained; only a few of them are presented below as illustrations. The higher-order equations on the $5\times5$ stencil are not considered further, since for the purposes of the present analysis it is sufficient to establish the existence of nonlinear schemes admitting both multipliers~(\ref{Lam1Lam2}).
Higher-order equations arising on narrow stencils are considered later in this section, where their number is sufficiently small to allow them to be listed explicitly.

\medskip

Among the quadratic schemes approximating the Monge--Amp\`ere equation, the following scheme
is of particular interest. It is invariant under the action of the discrete symmetries
of the dihedral group $D_4$ on the stencil, namely the rotations by $90^\circ$, $180^\circ$ and $270^\circ$,
the reflections $n\to-n$ and $m\to-m$ with respect to the temporal and spatial axes,
and the reflections with respect to the two diagonals, $n\leftrightarrow m$ and $n\leftrightarrow-m$.
Moreover, it is written in divergence form.
\begin{equation}\label{MAScheme}
M_m^{n} + K_m^{n-1} - K_m^{n+1} - N_{m-1}^{n} + N_{m+1}^{n} \;=\; 0,
\end{equation}
where
\begin{equation}
\label{MASchemeABC}
\begin{aligned}
M_m^n &= A_m^n B_m^n - \bigl(C_m^n\bigr)^2, \\
K_m^n &= C_m^n\, r_m^n - B_m^n\, p_m^n, \\
N_m^n &= A_m^n\, r_m^n - C_m^n\, p_m^n, \\
A_m^n &= u_m^{n+2} - 2u_m^{n} + u_m^{n-2}, \\
B_m^n &= u_{m+2}^{n} - 2u_m^{n} + u_{m-2}^{n}, \\
C_m^n &= u_{m+1}^{n+1} - u_{m-1}^{n+1} - u_{m+1}^{n-1} + u_{m-1}^{n-1}, \\
p_m^n &= u_m^{n+1} - u_m^{n-1},
\qquad
r_m^n = u_{m+1}^{n} - u_{m-1}^{n}.
\end{aligned}
\end{equation}
The fully expanded form of this scheme is rather cumbersome, so it is written in terms of auxiliary difference quantities.

The scheme approximates the homogeneous Monge--Amp\`ere equation~(\ref{MA}) with second order of accuracy in both $\tau$ and $h$: its left-hand side satisfies
\[
\Phi^n_m=48\,\tau^2h^2\,\bigl(\MongeAmp(u)+O(\tau^2+h^2)\bigr),
\]
where the factor $48\tau^2h^2$ is a normalization. It will be considered in more detail in a subsequent section.

\medskip

To illustrate the case $d>2$, one of the schemes with $d=3$, which has a relatively short form, is:
\begin{multline} \label{MA3Scheme}
u_{m-1}^{n}\Bigl[
   \bigl(u_{m-1}^{n} - u_{m-1}^{n-2}\bigr)\bigl(u_{m+1}^{n-1} - u_{m-1}^{n-1}\bigr)
 - \bigl(u_{m}^{n} - u_{m}^{n-2}\bigr)\bigl(u_{m}^{n-1} - u_{m-2}^{n-1}\bigr)
 \Bigr] \\
{} + u_{m-1}^{n+1}\Bigl[
   \bigl(u_{m}^{n} - u_{m-2}^{n}\bigr)\bigl(u_{m}^{n+2} - u_{m}^{n}\bigr)
 - \bigl(u_{m-1}^{n+1} - u_{m-1}^{n-1}\bigr)\bigl(u_{m+1}^{n+1} - u_{m-1}^{n+1}\bigr)
 \Bigr] \\
{} + u_{m}^{n+1}\Bigl[
   \bigl(u_{m+1}^{n+1} - u_{m+1}^{n-1}\bigr)\bigl(u_{m+2}^{n+1} - u_{m}^{n+1}\bigr)
 - \bigl(u_{m+2}^{n} - u_{m}^{n}\bigr)\bigl(u_{m+1}^{n+2} - u_{m+1}^{n}\bigr)
 \Bigr] = 0.
\end{multline}
The scheme corresponds to the equation
\[
u^{1/3} \cdot \MongeAmp\left(u^{4/3}\right)
=
(3 u u_{xx}+ u_{x}^2) u_{tt} +u_{t}^2 u_{xx} - 2 u_{x} u_{t} u_{tx} - 3 u u_{tx}^2 = 0.
\]

\bigskip

The analysis of narrow stencils and polynomial schemes of degree up to 3 leads to the following result.

\begin{proposition}
\label{prop:narrow}
Consider polynomial schemes of degree $\leqslant 3$ admitting both multipliers~(\ref{Lam1Lam2})
on stencils contained in a $3\times7$ stencil (three time layers, seven nodes in~$x$), the multipliers
being centered at the central node of the stencil. The following holds.

\begin{enumerate}[label=\roman*.]
\item
There is a unique, up to a constant factor, quadratic scheme. It approximates the equation
\begin{equation}\label{Class2.1}
u_x u_{txxx} - u_t u_{xxxx} + \frac{3}{2} \left(u_{xx} u_{txx} - u_{tx} u_{xxx} \right) = 0,
\end{equation}
with the leading term of order $\tau h^4$.

\item
Cubic schemes form a $15$-dimensional linear space. The leading term of a generic scheme of this space,
of order $\tau h^4$, approximates the equation
\begin{multline}\label{Class2.2}
u (u_x u_{txxx} - u_t u_{xxxx})
    + \frac{3}{2} \left( u u_{xx} + u_x^2 \right) u_{txx}
    \\
    - \frac{3}{2} \left(u u_{tx} + u_x u_t\right) u_{xxx}
    + \frac{1}{2} \left(u_x u_{tx} - u_t u_{xx} \right) u_{xx}
= 0.
\end{multline}

\item
The remaining $14$ independent cubic schemes approximate equations of higher order, which do not involve $u$
itself (only $u_x$, $u_t$, $u_{tt}$ and their $x$-derivatives). To write them, introduce for functions
$f,g,h$ of $x$ the trilinear operator
\begin{equation}\label{TOperator}
\begin{aligned}
\mathrm{T}[f,g,h]&=f\,g^{\prime}h^{\prime\prime}+3f\,g^{\prime\prime}h^{\prime}+2f\,g^{\prime\prime\prime}h-f^{\prime}g\,h^{\prime\prime} \\
&\quad+3f^{\prime}g^{\prime\prime}h-3f^{\prime\prime}g\,h^{\prime}-3f^{\prime\prime}g^{\prime}h-2f^{\prime\prime\prime}g\,h,
\end{aligned}
\end{equation}
where the prime denotes the derivative with respect to $x$. These equations are
\begin{align}
& \mathrm{T}[u_t,\,u_x,\,\partial_x^{k} u_x]=0, && k=0,1,2,3, && (\tau h^{5+k}), \label{Class2.3}\\
&\mathrm{T}[u_x,\,u_t,\, \partial_x^{k} u_t]=0, && k=0,1,2,3,4, && (\tau^2 h^{4+k}), \label{Class2.4}\\
&\mathrm{T}[u_x,\,u_t,\, \partial_x^{k} u_{tt}]=0, && k=0,1,2,3,4, && (\tau^3 h^{4+k}), \label{Class2.5}
\end{align}
where $\partial_x$ denotes the differential operator with respect to $x$, and the expression in parentheses gives the order of the leading term of the corresponding scheme. These $14$ equations are not differential consequences of~(\ref{Class2.1}) or~(\ref{Class2.2}).
\end{enumerate}

Nonlinear schemes exist only on the whole $3\times7$ stencil; there are none on any of its proper sub-stencils.
The same is true for the $7\times3$ stencil, up to the interchange of $t$ and $x$.
\end{proposition}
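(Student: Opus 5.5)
This proposition is a finite computational statement. I would prove it by running the algorithm of Section~\ref{sec:Classif} on every sub-stencil $\Omega$ of the $3\times7$ block, separately for each degree $s=2$ and $s=3$. Degree $1$ is excluded, and different degrees decouple by the homogeneity argument given before Proposition~\ref{prop:33}.

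For each rectangular $\Omega$, I place the multipliers $\Lambda_1=u^{n+1}_m-u^{n-1}_m$ and $\Lambda_2=u^n_{m+1}-u^n_{m-1}$ at the central node. I write $\Phi_s$ with undetermined coefficients over all degree-$s$ monomials in the nodal values (for the full $3\times7$ stencil there are $21$ variables, giving $231$ quadratic and $1771$ cubic monomials). I then apply $\mathcal{E}_u$ to $\Lambda_i\Phi_s$, shifting each partial derivative back to the base node, and collect coefficients of monomials to obtain a sparse linear system over $\mathbb{Q}$.

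Computing the exact rank with rational arithmetic gives the dimension of the solution space. Three outcomes must be confirmed:
\begin{itemize}
\item dimension $1$ for $s=2$ and $15$ for $s=3$ on the full stencil;
\item dimension $0$ on every proper sub-stencil;
\item the same results for the $7\times3$ case, which follow by the symmetry $n\leftrightarrow m$ exchanging $\Lambda_1$ and $\Lambda_2$, so no separate computation is needed.
\end{itemize}
Exploiting the reflection symmetries $n\to-n$ and $m\to-m$ (each changes the sign of one multiplier) would split the system into parity blocks and reduce its size. A useful cross-check is that sub-stencils that are translates of each other give equivalent systems, up to recentering.

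Next, I identify the continuum limits. I substitute $u^{n+j}_{m+k}=u(t+j\tau,x+k h)$ and Taylor-expand the basis solutions in $\tau$ and $h$. Each basis scheme is scaled to its lowest-order nonvanishing term $\tau^a h^b$, and I read off the differential expression.

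For $s=2$ this should yield (\ref{Class2.1}) at order $\tau h^4$. For $s=3$ the main difficulty is choosing a basis of the $15$-dimensional space whose leading terms are cleanly separated. I would filter by the order of the leading term, using Gaussian elimination with respect to the monomials $\tau^a h^b$ ordered by increasing total degree, so that each basis element has a distinct leading bidegree. Generically the leading term is the $\tau h^4$ expression (\ref{Class2.2}). The remaining $14$ elements have leading terms vanishing at lower order, and these are matched against $\mathrm{T}[\cdot,\cdot,\cdot]$ from (\ref{TOperator}) by comparing coefficients polynomially.

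The last claim is that these $14$ equations are not differential consequences of (\ref{Class2.1}) or (\ref{Class2.2}). I would check this by reducing each of them modulo the differential ideal generated by (\ref{Class2.1}) and (\ref{Class2.2}), solving (\ref{Class2.1}) for $u_{xxxx}$ in terms of lower derivatives, and verifying that the remainder is nonzero.

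I expect the main obstacle to be this normal-form step together with the bookkeeping of leading orders. Extracting a canonical basis whose leading terms are exactly the listed $\mathrm{T}$-expressions requires careful elimination across mixed orders $\tau^a h^b$, and mistakes there would misassign the stated orders.
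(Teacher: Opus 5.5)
Your route for items i--iii, the dimension counts and the $7\times3$ case is the paper's own: the proposition is obtained there purely by solving the direct-method linear system for each stencil and degree and Taylor-expanding the solutions. The step that fails is the last one.

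Write $C_1$, $C_2$ for the left-hand sides of (\ref{Class2.1}) and (\ref{Class2.2}). You propose to show that the $14$ equations have a nonzero remainder modulo the differential ideal generated by $C_1$ \emph{and} $C_2$ together, by solving $C_1=0$ for $u_{xxxx}$. That substitution reduces only modulo $C_1$. With $W=u_xu_{tx}-u_tu_{xx}$ one has
\[
C_2-u\,C_1=\tfrac12\bigl(3u_xW_x+u_{xx}W\bigr),
\]
which is a third-order relation. So the pair has integrability conditions that your normal form ignores.

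Worse, after completion the joint claim is false. Put $u_t=\mu u_x$. Then $3u_xW_x+u_{xx}W=3u_x^{2/3}\bigl(u_x^{7/3}\mu_x\bigr)_x$, so $\mu_x=\kappa(t)\,u_x^{-7/3}$, and $C_1$ becomes $\tfrac{5}{18}\kappa\,u_x^{-7/3}\bigl(u_{xx}^2-3u_xu_{xxx}\bigr)$. For $\kappa\neq0$ and $u_{xx}\neq0$ this makes $u_x$, up to sign, the $3/2$-power of a function $v$ linear in $x$. Then $u_t$ acquires a term proportional to $\kappa v^{-1}$, which contradicts $u_{xt}=u_{tx}$. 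Hence every solution of the pair satisfies $u_t=\lambda(t)u_x$ or $u_{xx}=0$. On both classes all $14$ expressions vanish: $\mathrm{T}$ is antisymmetric in its first two arguments, so $\mathrm{T}[f,f,h]\equiv0$, and every term of $\mathrm{T}$ contains a second $x$-derivative of one of its arguments. A correct joint reduction therefore returns zero and cannot certify the claim.

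The claim has to be read, and verified, for each equation separately. For a single fourth-order equation no completion is needed, so substituting the solved leading derivative and its prolongations is a legitimate normal form. For instance,
\[
\mathrm{T}[u_t,u_x,u_x]=-2u_x\,C_1-3u_xu_{xx}u_{txx}-u_xu_{tx}u_{xxx}+4u_tu_{xx}u_{xxx},
\]
and this nonzero third-order remainder is not a consequence of $C_1=0$, since that fourth-order equation imposes no relation on third-order jets. The same reduction must then be done separately with $C_2$ (dividing by its separant $-uu_t$); your plan never actually performs this.

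A minor related point concerns the sub-stencil claim. Keep the multipliers at the centre of the $3\times7$ block for every sub-stencil. Translated sub-stencils are then not equivalent, so your ``recentering'' cross-check does not apply. The simplest check is that no nonzero element of the full $1$- and $15$-dimensional solution spaces avoids the first or last row or column.
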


\begin{remark}
In Proposition~\ref{prop:narrow}, the \emph{leading term} of a scheme $\Phi=0$ is the first nonzero term of the expansion of $\Phi$ in powers of $\tau$ and $h$. It has the form $\tau^ah^bF[u]$, and the scheme is said to approximate the equation $F[u]=0$. The exponents $a,b$ only indicate the normalization of $\Phi$ and should not be confused with the order of approximation, which is determined by the subsequent terms. The leading terms are given because a space of schemes does not correspond to a single equation: a generic element of the $15$-dimensional space of cubic schemes approximates~(\ref{Class2.2}), whereas special elements, in which the lower-order terms vanish, approximate the other equations, with other exponents $a,b$. Thus, the leading term shows which scheme corresponds to which equation.
\end{remark}

For convenience, the main results of this section are summarized in
Table~\ref{tab:summary}. The equations approximated by the schemes on the
$5\times5$ stencil and on the $3\times7$ stencil are collected in
Tables~\ref{tab:55} and~\ref{tab:narrow}, respectively.

\begin{table}[H]
\centering
\renewcommand{\arraystretch}{1.2}
\begin{tabular}{llll}
\hline
Result & Stencil & Degree $d$ & Outcome \\
\hline
Prop.~\ref{prop:33}     & $3\times3$ & any (Theorem~\ref{th:33all}) & no nonlinear schemes \\
Prop.~\ref{prop:44}     & $4\times4$ & $\leqslant 5$  & no nonlinear schemes \\
Prop.~\ref{prop:55}     & $5\times5$ & $2,3,4$        & schemes exist (Table~\ref{tab:55}) \\
Prop.~\ref{prop:narrow} & $3\times7$ & $\leqslant 3$  & schemes exist only on the full stencil (Table~\ref{tab:narrow}) \\
\hline
\end{tabular}
\caption{Summary of the classification results for the centered multipliers~(\ref{Lam1Lam2}).}
\label{tab:summary}
\end{table}

\begin{table}[H]
\centering
\renewcommand{\arraystretch}{1.2}
\begin{tabular}{lccc}
\hline
Order of the approximated PDE & $d=2$ & $d=3$ & $d=4$ \\
\hline
$2$ (Monge--Amp\`ere type)     & $1$ & $3$     & $6$ \\
$3$                            & $0$ & $3$     & $15$ \\
$\geqslant 4$                  & $3$ & $56$    & $491$ \\
\hline
\end{tabular}
\caption{Number of linearly independent equations approximated by schemes of degree $d$ on the $5\times5$ stencil (Proposition~\ref{prop:55}).}
\label{tab:55}
\end{table}

\begin{table}[H]
\centering
\renewcommand{\arraystretch}{1.2}
\begin{tabular}{cclcc}
\hline
Degree $d$ & $\dim$ & Equation & Number & Order of PDE \\
\hline
$2$ & $1$  & (\ref{Class2.1})                & $1$             & $4$ \\
\hline
$3$ & $15$ & (\ref{Class2.2})                & $1$             & $4$ \\
    &      & (\ref{Class2.3}) & $4$         & $4$--$6$ \\
    &      & (\ref{Class2.4}) & $5$       & $4$--$7$ \\
    &      & (\ref{Class2.5}) & $5$       & $4$--$8$ \\
\hline
\end{tabular}
\caption{Polynomial schemes of degree $\leqslant 3$ on the $3\times7$ stencil admitting both multipliers~(\ref{Lam1Lam2}) (Proposition~\ref{prop:narrow}). Here $\dim$ is the dimension of the space of schemes of the given degree. There are no nonlinear schemes on proper sub-stencils of $3\times7$.}
\label{tab:narrow}
\end{table}

The equations in Table~\ref{tab:narrow} are of fourth and higher order, and,
apparently, none of them has a known physical interpretation.
Nevertheless, we give examples of schemes approximating some of them.

\begin{itemize}

\item
An example of a scheme approximating equation~(\ref{Class2.1}) is
\begin{align*}
& \bigl(u_{m-2}^{n+1}-u_{m-2}^{n-1}\bigr)\bigl(u_{m}^{n}-u_{m-2}^{n}\bigr)
 - \bigl(u_{m-1}^{n+1}-u_{m-1}^{n-1}\bigr)\bigl(u_{m-1}^{n}-u_{m-3}^{n}\bigr) \\
&{} - \bigl(u_{m-1}^{n+1}-u_{m-1}^{n-1}\bigr)\bigl(u_{m+2}^{n}-u_{m}^{n}\bigr)
 + \bigl(u_{m+1}^{n+1}-u_{m+1}^{n-1}\bigr)\bigl(u_{m}^{n}-u_{m-2}^{n}\bigr) \\
&{} + \bigl(u_{m+1}^{n+1}-u_{m+1}^{n-1}\bigr)\bigl(u_{m+3}^{n}-u_{m+1}^{n}\bigr)
 - \bigl(u_{m+2}^{n+1}-u_{m+2}^{n-1}\bigr)\bigl(u_{m+2}^{n}-u_{m}^{n}\bigr) = 0 .
\end{align*}

\item
An example of a scheme approximating equation~(\ref{Class2.2}) is
\begin{align*}
{}& u_{m-1}^{n}\Bigl[\bigl(u_{m-2}^{n+1}-u_{m-2}^{n-1}\bigr)\bigl(u_{m}^{n}-u_{m-2}^{n}\bigr)
 - \bigl(u_{m-1}^{n+1}-u_{m-1}^{n-1}\bigr)\bigl(u_{m-1}^{n}-u_{m-3}^{n}\bigr)\Bigr] \\[4pt]
&{} - u_{m}^{n}\Bigl[\bigl(u_{m-1}^{n+1}-u_{m-1}^{n-1}\bigr)\bigl(u_{m+2}^{n}-u_{m}^{n}\bigr)
 - \bigl(u_{m+1}^{n+1}-u_{m+1}^{n-1}\bigr)\bigl(u_{m}^{n}-u_{m-2}^{n}\bigr)\Bigr] \\[4pt]
&{} + u_{m+1}^{n}\Bigl[\bigl(u_{m+1}^{n+1}-u_{m+1}^{n-1}\bigr)\bigl(u_{m+3}^{n}-u_{m+1}^{n}\bigr)
 - \bigl(u_{m+2}^{n+1}-u_{m+2}^{n-1}\bigr)\bigl(u_{m+2}^{n}-u_{m}^{n}\bigr)\Bigr] = 0 .
\end{align*}

\item
As equations~(\ref{Class2.3})--(\ref{Class2.5}) are defined by the same operator, they can be regarded as
being of the same type, and only one scheme is presented, for an equation of class~(\ref{Class2.5}) with $k=4$:
\begin{multline*}
\mathrm{T}[u_x, u_t, u_{ttxxxx}] =
u_x\,u_{tx}\,u_{ttxxxxxx}
+3\,u_x\,u_{txx}\,u_{ttxxxxx}
\\
+2\,u_x\,u_{txxx}\,u_{ttxxxx}
-u_t\,u_{xx}\,u_{ttxxxxxx}
+3\,u_{xx}\,u_{txx}\,u_{ttxxxx}
\\
-3\,u_t\,u_{xxx}\,u_{ttxxxxx}
-3\,u_{tx}\,u_{xxx}\,u_{ttxxxx}
-2\,u_t\,u_{xxxx}\,u_{ttxxxx}=0.
\end{multline*}
This equation is of order~$6$ in $x$ (the highest derivative is $u_{ttxxxxxx}$) and of total order~$8$.
To write the scheme in a compact form, we introduce the following expressions\:
\begin{align*}
&p_{m+l}=u_{m+l}^{n+1}-u_{m+l}^{n-1}, \qquad
 r_{m+k}=u_{m+k+1}^{n}-u_{m+k-1}^{n},\\
&q_{m+j}=u_{m+j}^{n+1}-2u_{m+j}^{n}+u_{m+j}^{n-1},\\
&\delta q_{m+c}=q_{m+c-2}-4q_{m+c-1}+6q_{m+c}-4q_{m+c+1}+q_{m+c+2},\\
&W_{k,l}=r_{m+k}\,p_{m+l}-r_{m+l}\,p_{m+k}.
\end{align*}
The scheme on the $3\times7$ stencil has the form
\[
\delta q_{m-1}\,W_{-1,-2}+\delta q_{m}\,W_{-1,1}+\delta q_{m+1}\,W_{2,1}=0.
\]
\end{itemize}

It is straightforward to verify using the discrete Euler operator that the presented schemes admit the multipliers~(\ref{Lam1Lam2}).

\subsection{Non-centered multipliers}

We do not discuss these results in further detail, as they are generally very similar to those obtained in the previous section.
It is worth noting, however, that if the centered multipliers $\Lambda_1,\Lambda_2$ are replaced by the cell (non-centered) multipliers
\begin{equation}\label{LambdasCell}
\widetilde{\Lambda}_1 = u_{m}^{n+1}-u_{m}^{n}, \qquad
\widetilde{\Lambda}_2 = u_{m+1}^{n}-u_{m}^{n},
\end{equation}
which approximate $u_t$ and $u_x$ on a single cell, the results remain essentially unchanged. The smallest square stencil is reduced in size:
$4\times4$ instead of $5\times5$ for the Monge--Amp\`ere-type schemes, and $2\times6$ and $6\times2$ instead of $3\times7$ and $7\times3$
for the narrow stencils. As before, no nonlinear schemes exist on smaller stencils (in the range of degrees considered), and the leading parts of the schemes
approximate the same equations. The difference is in the schemes themselves and in the dimensions of the corresponding spaces of schemes:
for instance, the space of cubic schemes on the $2\times6$ stencil is $8$-dimensional and contains only a part of the equations
of the families~(\ref{Class2.3}) and~(\ref{Class2.4}) (none of the family~(\ref{Class2.5}), which requires three time layers),
whereas on the $3\times7$ stencil all of them appear, as for the centered multipliers.

\medskip

In this case, a considerably simpler scheme can be constructed for the Monge--Amp\`ere equation than~(\ref{MA3Scheme}):
\begin{multline} \label{MASchemeCell}
\bigl(u_{m}^{n}-u_{m}^{n-1}\bigr)\bigl(u_{m+2}^{n-1}-u_{m+1}^{n-1}\bigr)
 - \bigl(u_{m+1}^{n}-u_{m+1}^{n-1}\bigr)\bigl(u_{m+1}^{n-1}-u_{m}^{n-1}\bigr) \\
{} + \bigl(u_{m+1}^{n+1}-u_{m+1}^{n}\bigr)\bigl(u_{m+1}^{n+1}-u_{m}^{n+1}\bigr)
 - \bigl(u_{m}^{n+2}-u_{m}^{n+1}\bigr)\bigl(u_{m+2}^{n}-u_{m+1}^{n}\bigr) \\
{} + \bigl(u_{m-1}^{n+2}-u_{m-1}^{n+1}\bigr)\bigl(u_{m}^{n}-u_{m-1}^{n}\bigr)
 - \bigl(u_{m-1}^{n+1}-u_{m-1}^{n}\bigr)\bigl(u_{m}^{n+1}-u_{m-1}^{n+1}\bigr) = 0 .
\end{multline}
Unlike~(\ref{MAScheme}), however, the latter scheme does not admit the whole group~$D_4$.
Moreover, it can be verified that there are no schemes of degree $\leqslant 4$ on this stencil for the Monge--Amp\`ere equation
that admit both the multipliers~(\ref{LambdasCell}) and the group~$D_4$.

\section{Further analysis of the ``dihedral'' scheme for the Monge--Amp\`ere equation}
\label{sec:Monge}

\subsection{Symmetry analysis and conservation laws}

The difference analogue of the direct method does not explicitly use the symmetry properties of the underlying differential equation. It is therefore of interest to examine what information about these symmetries is reflected in a finite-difference scheme constructed by this method. The homogeneous Monge--Amp\`ere equation provides a particularly suitable example for such an analysis, since in the differential case it admits a 15-dimensional Lie algebra of point symmetries~\cite{IbragimovHandbookLieVol1}.

As the analysis above has led to the construction of a number of schemes for the
homogeneous Monge--Amp\`ere equation
\begin{equation}\label{MA}
\MongeAmp(u) = u_{tt} u_{xx} - u_{tx}^2 = 0,
\end{equation}
we consider in more detail one of these schemes, namely, the conservative
scheme (\ref{MAScheme}), which also admits the discrete group~$D_4$.
This scheme is not intended as an alternative to numerical methods for the
elliptic Monge--Amp\`ere equation $\det D^2u=f$, such as the monotone and filtered
schemes~\cite{Oberman2008,FroeseOberman2013}, which are designed to converge
to viscosity solutions. Note that equation~(\ref{MA}) corresponds to the degenerate case $f=0$.
Our aim is rather to see what a scheme obtained by the direct method inherits from the symmetries
and conservation laws of the underlying equation. We show that this scheme
possesses five finite-difference analogues of the local conservation laws of
the original equation and admits a substantial part of the Lie algebra of
generators admitted by equation~(\ref{MA}).

\smallskip

Eq. (\ref{MA}) admits the following 15-parameter Lie algebra~\cite{IbragimovHandbookLieVol1}:
\begin{gather*}
X_1 = \frac{\partial}{\partial t}, \qquad
X_2 = \frac{\partial}{\partial x}, \qquad
X_3 = \frac{\partial}{\partial u}, \qquad
X_4 = t\frac{\partial}{\partial t}, \\[6pt]
X_5 = x\frac{\partial}{\partial x}, \qquad
X_6 = u\frac{\partial}{\partial u}, \qquad
X_7 = x\frac{\partial}{\partial t}, \qquad
X_8 = t\frac{\partial}{\partial x}, \\[6pt]
X_9 = t\frac{\partial}{\partial u}, \qquad
X_{10} = x\frac{\partial}{\partial u}, \qquad
X_{11} = u\frac{\partial}{\partial t}, \qquad
X_{12} = u\frac{\partial}{\partial x}, \\[6pt]
X_{13} = t^2\frac{\partial}{\partial t} + tx\frac{\partial}{\partial x} + tu\frac{\partial}{\partial u}, \qquad
X_{14} = tx\frac{\partial}{\partial t} + x^2\frac{\partial}{\partial x} + xu\frac{\partial}{\partial u}, \\[6pt]
X_{15} = tu\frac{\partial}{\partial t} + xu\frac{\partial}{\partial x} + u^2\frac{\partial}{\partial u}.
\end{gather*}
Using the invariance criteria~(\ref{DiffInv}), it follows that scheme~(\ref{MAScheme}) admits the first twelve generators $X_1,\dots,X_{12}$, but not $X_{13}$, $X_{14}$, and $X_{15}$. Moreover, only $X_1,\dots,X_6$, $X_9$, and $X_{10}$ are compatible with the orthogonal uniform mesh, as they satisfy the conditions~\cite{DorodnNetSpaces1991,DorodnBook2011}.
Hence, in what follows we restrict ourselves to
these eight generators and ask which of them give rise to conservation laws.

\medskip

Following~\cite{OlverLieGroups1993}, we first write down their characteristics
(up to an irrelevant overall sign):
\[
Q_1=u_t,\quad Q_2=u_x,\quad Q_3=1,\quad Q_4=t u_t,\quad Q_5=x u_x,
\]
\[
Q_6=u,\quad Q_9=t,\quad Q_{10}=x.
\]
Characteristics of symmetries are natural candidates for conservation-law
multipliers~\cite{OlverLieGroups1993,BlumanCheviakovAncoBook2010}.
However, not every characteristic is a multiplier: by Noether's theorem,
this holds only for variational symmetries. To select the multipliers among
the $Q_i$, we apply the differential Euler operator $\mathsf{E}_u$ to the
products $Q_i\Phi$ and retain those $Q_i$ for which
\[
\mathsf{E}_u\bigl(Q_i\,\Phi\bigr)\equiv 0,\qquad i\in\{1,2,3,4,5,6,9,10\}.
\]
Here, $\Phi=0$ denotes equation~(\ref{MA}), while $\Phi^n_m=0$ will denote scheme~(\ref{MAScheme}).
This calculation shows that only $Q_1$, $Q_2$, $Q_3$, $Q_9$, and $Q_{10}$, corresponding to $X_1$, $X_2$, $X_3$, $X_9$, and $X_{10}$, respectively, are conservation-law multipliers, whereas $Q_4$, $Q_5$, and $Q_6$ are not.

The differential conservation laws of~(\ref{MA}) are known
(see, e.g.,~\cite{SaminaArifJhangeerWali2025,IbragimovHandbookLieVol1}) and are omitted here;
their discrete counterparts are derived below.

\medskip

The difference multipliers corresponding to $X_1$, $X_2$, $X_3$ are already known:
\[
\Lambda_1=u^{n+1}_m-u^{n-1}_m,\qquad
\Lambda_2=u^{n}_{m+1}-u^{n}_{m-1},\qquad
\Lambda_3=1.
\]
Two more multipliers, corresponding to $X_9$ and $X_{10}$, are readily obtained:
\[
\Lambda_9=t_n,\qquad \Lambda_{10}=x_m.
\]

For each $i\in\{1,2,3,9,10\}$ we found fluxes $F^{(i)}$, $G^{(i)}$ (polynomials in $u$ on the
stencil, with coefficients depending on $t_n$, $x_m$ for $i=9,10$) such that
\begin{equation}\label{DiffCL}
\Lambda_i\,\Phi_m^n=\bigl(F^{(i)}\bigr){}_m^{n+1}-\bigl(F^{(i)}\bigr){}_m^{n}
+\bigl(G^{(i)}\bigr){}_{m+1}^{n}-\bigl(G^{(i)}\bigr){}_m^{n},
\end{equation}
so that on solutions of the scheme, $\Lambda_i\Phi^n_m=0$ is a difference conservation law.

For each multiplier $\Lambda_i$, the corresponding fluxes are given using the quantities~(\ref{MASchemeABC}).

\begin{itemize}

\item
$\Lambda_1= u^{n+1}_m-u^{n-1}_m = p_m^n$:
\begin{align*}
F^{(1)}{}_m^n &= \bigl(B_m^{n-1}+B_m^{n}\bigr)p_m^{n-1}p_m^{n}-C_m^{n-1}\,p_m^{n}\,r_m^{n-1}-C_m^{n}\,p_m^{n-1}\,r_m^{n},\\
G^{(1)}{}_m^n &= A_{m-1}^{n}\,p_m^{n}\,r_{m-1}^{n}+A_{m}^{n}\,p_{m-1}^{n}\,r_{m}^{n}-\bigl(C_{m-1}^{n}+C_{m}^{n}\bigr)p_{m-1}^{n}p_{m}^{n}.
\end{align*}

\item
$\Lambda_2= u^{n}_{m+1}-u^{n}_{m-1} = r_m^n$:
\begin{align*}
F^{(2)}{}_m^n &= B_m^{n-1}\,p_m^{n-1}\,r_m^{n}+B_m^{n}\,p_m^{n}\,r_m^{n-1}-\bigl(C_m^{n-1}+C_m^{n}\bigr)r_m^{n-1}r_m^{n},\\
G^{(2)}{}_m^n &= \bigl(A_{m-1}^{n}+A_{m}^{n}\bigr)r_{m-1}^{n}r_{m}^{n}-C_{m-1}^{n}\,p_{m-1}^{n}\,r_{m}^{n}-C_{m}^{n}\,p_{m}^{n}\,r_{m-1}^{n}.
\end{align*}

\item
$\Lambda_3=1$:
\begin{align*}
F^{(3)}{}_m^n &= B_m^{n-1}p_m^{n-1}+B_m^{n}p_m^{n}-\bigl(C_m^{n-1}+C_m^{n}\bigr)\bigl(r_m^{n-1}+r_m^{n}\bigr),\\
G^{(3)}{}_m^n &= \bigl(A_{m-1}^{n}+A_{m}^{n}\bigr)\bigl(r_{m-1}^{n}+r_{m}^{n}\bigr)-C_{m-1}^{n}p_{m-1}^{n}-C_{m}^{n}p_{m}^{n}.
\end{align*}

\item $\Lambda_9=t_n$:
\begin{align*}
F^{(9)}{}_m^n &= t_n F^{(3)}{}_m^n+\tau\,\widehat F_m^n, \qquad
G^{(9)}{}_m^n = t_n G^{(3)}{}_m^n+\tau\,\widehat G_m^n,\\
\widehat F_m^n &= \bigl(r_m^{n-1}+r_m^{n}\bigr)\bigl(r_m^{n}+r_m^{n+1}\bigr)+r_{m+1}^{n}\bigl(r_{m+1}^{n-1}+r_{m+1}^{n+1}\bigr),
\\
\widehat G_m^n &= -\bigl(p_m^{n}r_{m-1}^{n}+p_{m+1}^{n}r_{m}^{n}+p_m^{n+1}r_{m-1}^{n+1}+p_{m+1}^{n+1}r_{m}^{n+1}\bigr).
\end{align*}

\item $\Lambda_{10}=x_m$:
Here the roles of $n$ and $m$ are exchanged ($A\leftrightarrow B$, $p\leftrightarrow r$, $\tau\to h$); one uses the second (mirror) divergence form of the scheme,
\begin{align*}
F^{(10)}{}_m^n &= x_m F^{(3')}{}_m^n+h\,\widetilde F_m^n, \qquad
G^{(10)}{}_m^n = x_m G^{(3')}{}_m^n+h\,\widetilde G_m^n,\\
\widetilde F_m^n &= -\bigl(r_m^{n}p_m^{n-1}+r_m^{n+1}p_m^{n}+r_{m+1}^{n}p_{m+1}^{n-1}+r_{m+1}^{n+1}p_{m+1}^{n}\bigr), \\
\widetilde G_m^n &= \bigl(p_{m-1}^{n}+p_{m}^{n}\bigr)\bigl(p_{m}^{n}+p_{m+1}^{n}\bigr)+p_{m}^{n+1}\bigl(p_{m-1}^{n+1}+p_{m+1}^{n+1}\bigr),
\end{align*}
where
\begin{align*}
F^{(3')}{}_m^n &= \bigl(B_m^{n-1}+B_m^{n}\bigr)\bigl(p_m^{n-1}+p_m^{n}\bigr)-C_m^{n-1}r_m^{n-1}-C_m^{n}r_m^{n},\\
G^{(3')}{}_m^n &= A_{m-1}^{n}r_{m-1}^{n}+A_{m}^{n}r_{m}^{n}-\bigl(C_{m-1}^{n}+C_{m}^{n}\bigr)\bigl(p_{m-1}^{n}+p_{m}^{n}\bigr).
\end{align*}

\end{itemize}

Thus, the scheme~(\ref{MAScheme}) possesses discrete analogues of the
conservation laws that correspond to all the generators $X_1$, $X_2$, $X_3$,
$X_9$, $X_{10}$ which are compatible with the orthogonal uniform mesh and
give conservation laws of the differential equation~(\ref{MA}). The
generators $X_4$, $X_5$, $X_6$, which are compatible with the mesh but do not
yield conservation laws even in the differential case, give none in the
difference case either. The remaining generators admitted by the scheme
($X_7$, $X_8$, $X_{11}$, $X_{12}$) violate the mesh conditions and fall outside
the scope of the difference Euler operator used here.

\subsection{Numerical verification}
\label{sec:MANumerics}

The properties of the scheme~(\ref{MAScheme}) established above, namely its approximation of equation~(\ref{MA}) and the existence of the difference conservation laws~(\ref{DiffCL}), were obtained analytically; the identities~(\ref{DiffCL}) ($i \in \{1,2,3,9,10\}$) were also verified symbolically with computer algebra. To verify the approximation property independently, and to examine the behaviour of the conservation laws on exact solutions, the expression $\Phi^n_m$ and the fluxes $F^{(i)}$, $G^{(i)}$ were programmed and evaluated on mesh functions, namely on the values of exact solutions of~(\ref{MA}) at the mesh nodes. The scheme itself is not solved here, and no boundary-value problem is considered: the aim is to test it as a discrete model of~(\ref{MA}). Two tests were performed: (i)~the order of approximation of equation~(\ref{MA}) on its exact solutions; (ii)~the balance of the five difference conservation laws on exact solutions.

\medskip

\paragraph*{Test functions}
The exact solutions are taken from the classical description of the solutions of the homogeneous Monge--Amp\`ere equation~(\ref{MA}); see, e.g.,~\cite{Goursat1964CourseIII1,PolyaninZaitsevNPDE2012,IbragimovHandbookLieVol1}.
Its general solution can be written as the envelope of a one-parameter family of planes (i.e., the graph of $u$ is a developable surface),
\begin{equation}\label{MAEnvelope}
u=s\,t+\varphi(s)\,x+\psi(s),\qquad t+\varphi'(s)\,x+\psi'(s)=0,
\end{equation}
with arbitrary functions $\varphi$ and $\psi$; the second relation determines the parameter $s=s(t,x)$. This relation expresses the stationarity of $u$ with respect to $s$, hence $u_t=s$ and $u_x=\varphi(s)$. Therefore $u_x$ is a function of $u_t$, the Hessian of $u$ is degenerate, and $\MongeAmp(u)=0$. Solutions that depend on one arbitrary function of one variable include the functions
\begin{equation}\label{MAexactSolType1}
u=f(at+bx)+ct+dx+e
\end{equation}
and the functions
\begin{equation}\label{MAexactSolType2}
u=t\,g(x/t),
\end{equation}
which are homogeneous of degree one~\cite{PolyaninZaitsevNPDE2012}.

Four test solutions are of the type~(\ref{MAexactSolType2}) with $g(z)=z^2$, $z^3$, $\sqrt{1+z^2}$, $e^{z}$:
\[
u=\frac{x^2}{t},\qquad u=\frac{x^3}{t^2},\qquad u=\sqrt{t^2+x^2},\qquad u=t\,e^{x/t}.
\]
They are considered in the box $1\leqslant t\leqslant2$, $0.5\leqslant x\leqslant1.5$. The fifth solution is a non-degenerate envelope~(\ref{MAEnvelope}) with $\varphi(s)=s^3$, $\psi(s)=2s^2$. In this case the second relation of~(\ref{MAEnvelope}) is the quadratic equation $t+4s+3xs^2=0$, and its root vanishing at $t=0$,
\begin{equation}\label{MAEnvelopeSol}
s=s(t,x)=\frac{-2+\sqrt{4-3xt}}{3x},\qquad u=st+s^3x+2s^2,
\end{equation}
gives an explicit solution, smooth in the box $0\leqslant t\leqslant0.5$, $1\leqslant x\leqslant1.5$ (where $3xt<4$); here $u_t=s$, $u_x=s^3$. This solution is neither homogeneous nor of the form~(\ref{MAexactSolType1}), i.e., it does not belong to either of these special classes. In addition, a smooth function that is \emph{not} a solution, $u=\sin t\cos 2x+tx$ on $[0,1]^2$, is used as a control; for it, $\MongeAmp(u)\neq0$ is known explicitly.
Plane waves $e^{at+bx}$, which are of the form~(\ref{MAexactSolType1}), were deliberately not used: on a uniform mesh, $M^n_m=K^n_m=N^n_m=0$ identically for them, so the scheme is exact on them and the test would be uninformative.

\medskip

\paragraph*{Consistency}
The mesh is uniform with $N$ intervals in each direction, so that $\tau=h=\ell/N$, where $\ell$ is the side of the box. The scheme~(\ref{MAScheme}) is evaluated at all nodes whose $5\times5$ stencil lies inside the mesh. As mentioned above, the quantity $\Phi^n_m/(48\tau^2h^2)$ approximates $\MongeAmp(u)$ with the error $O(\tau^2+h^2)$; hence, for a sufficiently smooth $u$, the maximal error $E_N$ over the nodes is expected to behave as $E_N\approx Ch^2$, and the observed order $p=\log_2(E_{N/2}/E_N)$ is expected to tend to~$2$ as the mesh is refined. The errors are shown in Fig.~\ref{fig:MANum}(a) and Table~\ref{tab:MAcons}. For all test functions $p$ indeed approaches $2$: between $N=160$ and $N=320$ it equals $1.87$--$1.96$ for the exact solutions and $1.99$ for the control function. Thus the numerical results confirm the second order of approximation. For the functions with large derivatives in the box ($x^3/t^2$, $t\,e^{x/t}$) the observed orders are below~$1$ for $N\leqslant20$ and approach~$2$ only on finer meshes, since the higher-order terms of the error are still significant on coarse meshes. For the envelope solution, whose errors are small in absolute value, the convergence is less regular before $N=40$.

\begin{table}[ht]
\def\arraystretch{1.15}
\centering
\small
\begin{tabular}{lccccc}
\hline
$u$ & $E_{40}$ & $E_{80}$ & $E_{160}$ & $E_{320}$ & $p$\\
\hline
$x^2/t$            & $4.9\cdot10^{-3}$ & $1.5\cdot10^{-3}$ & $4.0\cdot10^{-4}$ & $1.0\cdot10^{-4}$ & 1.94\\
$x^3/t^2$          & $9.6\cdot10^{-2}$ & $3.1\cdot10^{-2}$ & $8.9\cdot10^{-3}$ & $2.4\cdot10^{-3}$ & 1.90\\
$\sqrt{t^2+x^2}$   & $1.6\cdot10^{-4}$ & $4.4\cdot10^{-5}$ & $1.2\cdot10^{-5}$ & $3.0\cdot10^{-6}$ & 1.96\\
$t\,e^{x/t}$       & $2.3\cdot10^{-2}$ & $7.8\cdot10^{-3}$ & $2.3\cdot10^{-3}$ & $6.2\cdot10^{-4}$ & 1.89\\
envelope~(\ref{MAEnvelopeSol})           & $4.8\cdot10^{-8}$ & $1.8\cdot10^{-8}$ & $5.2\cdot10^{-9}$ & $1.4\cdot10^{-9}$ & 1.87\\
\hline
$\sin t\cos2x+tx$ (control) & $4.7\cdot10^{-3}$ & $1.2\cdot10^{-3}$ & $3.0\cdot10^{-4}$ & $7.6\cdot10^{-5}$ & 1.99\\
\hline
\end{tabular}
\caption{Maximal consistency error $E_N=\max|\Phi^n_m/(48\tau^2h^2)-\MongeAmp(u)|$ and observed order $p=\log_2(E_{N/2}/E_N)$ at $N=320$ for the solutions of type~(\ref{MAexactSolType2}), the envelope solution~(\ref{MAEnvelopeSol}), and the control function.}
\label{tab:MAcons}
\end{table}

\medskip

\paragraph*{Conservation laws on exact solutions}
Let $R=\{n_0\leqslant n\leqslant n_1,\ m_0\leqslant m\leqslant m_1\}$ be a rectangle of mesh nodes. Summation of~(\ref{DiffCL}) over $R$ gives the identity
\[
\sum_{n=n_0}^{n_1}\sum_{m=m_0}^{m_1}\Lambda_i\,\Phi^n_m
=\sum_{m=m_0}^{m_1}\Bigl(F^{(i)}{}_m^{\,n_1+1}-F^{(i)}{}_m^{\,n_0}\Bigr)
+\sum_{n=n_0}^{n_1}\Bigl(G^{(i)}{}_{m_1+1}^{\,n}-G^{(i)}{}_{m_0}^{\,n}\Bigr),
\]
in which all interior terms have cancelled. If the mesh function is the restriction of an exact solution of~(\ref{MA}), the left-hand side is a sum of local truncation errors of the scheme, and the right-hand side, the net flux through the boundary of $R$, should therefore be small compared with the fluxes themselves. To measure this, the relative imbalance
\[
\delta_i=\frac{\bigl|\sum_{m}(F^{(i)}{}_m^{\,n_1+1}-F^{(i)}{}_m^{\,n_0})+\sum_{n}(G^{(i)}{}_{m_1+1}^{\,n}-G^{(i)}{}_{m_0}^{\,n})\bigr|}
{\sum_{m}(|F^{(i)}{}_m^{\,n_1+1}|+|F^{(i)}{}_m^{\,n_0}|)+\sum_{n}(|G^{(i)}{}_{m_1+1}^{\,n}|+|G^{(i)}{}_{m_0}^{\,n}|)}
\]
was computed on a sequence of meshes with $N=40,80,160,320,640$. The rectangle $R$ was the same in all runs: the central part of the box obtained by discarding $N/10$ nodes at each side. For all five laws and all five exact solutions $\delta_i$ decreases as $N^{-2}$: the observed order is $2.0$ for $N\geqslant160$, with the only exception of $\Lambda_1$ on $u=t\,e^{x/t}$, where the imbalance is an order of magnitude smaller than for the other laws at $N=40$ and approaches the rate $2$ from below ($1.6$, $1.8$, $1.9$). At $N=640$ the imbalance does not exceed $4.4\cdot10^{-6}$ (Fig.~\ref{fig:MANum}(b)). For the control function, for which $\MongeAmp(u)\neq0$, the imbalance does not decrease and stays between $4\cdot10^{-2}$ and $2\cdot10^{-1}$ for all $N$, which shows that the test is sensitive to the violation of the equation. Hence the discrete laws are exact on solutions of the scheme (up to round-off), and on exact solutions of~(\ref{MA}) they are satisfied with second order of accuracy.

\begin{figure}[ht]
\centering
\includegraphics[width=\textwidth]{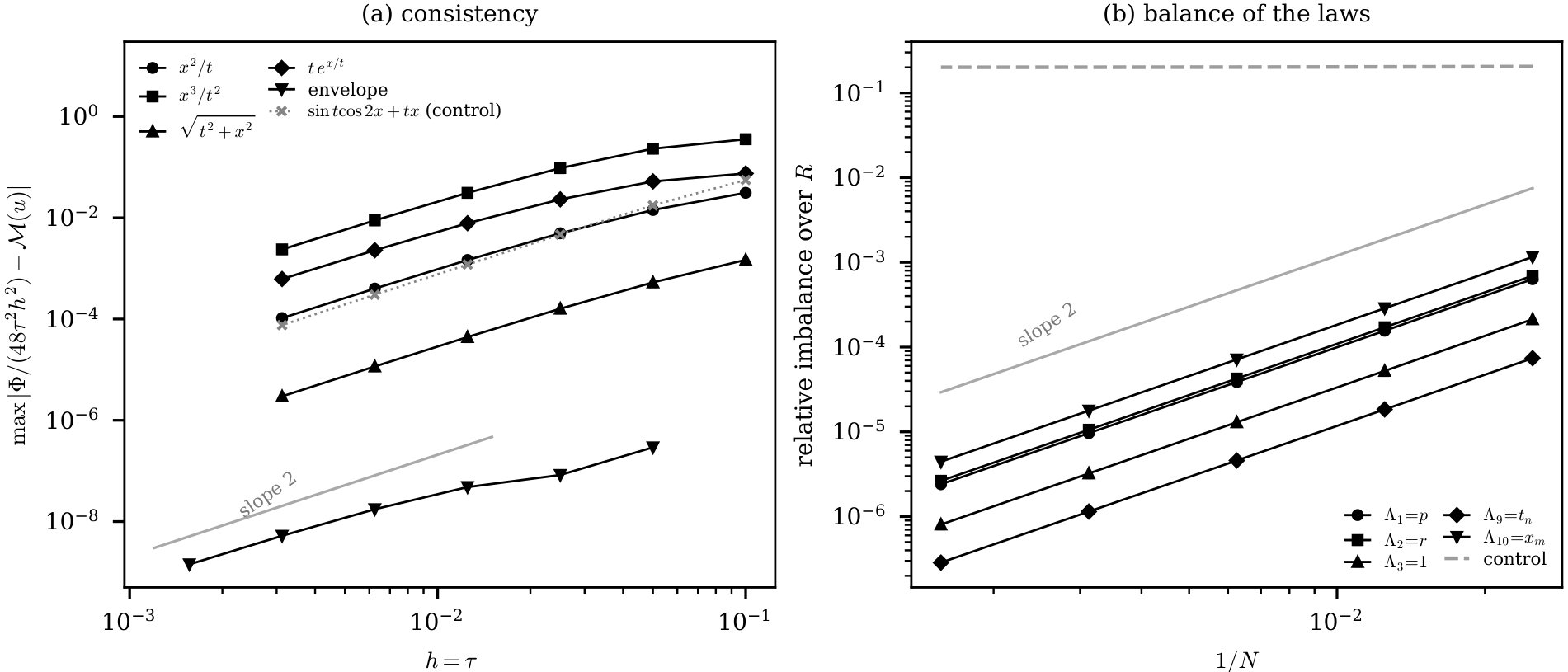}
\caption{Numerical verification of the scheme~(\ref{MAScheme}). (a)~Consistency: maximal error $|\Phi^n_m/(48\tau^2h^2)-\MongeAmp(u)|$ versus $h=\tau$ for the five exact solutions and for the control function (grey, dotted); the grey line has slope~2. (b)~Relative imbalance $\delta_i$ of the five conservation laws over a fixed rectangle $R$ versus $1/N$: for each law, the largest value over the five exact solutions (solid), and the largest value over the five laws for the control function (dashed); the grey line has slope~2.}
\label{fig:MANum}
\end{figure}


\section{Conclusion}
\label{sec:Conclusion}

Based on previous experience with the construction of invariant conservative finite-difference schemes for hydrodynamic-type equations~\cite{DorodnLagrangianShallowWater2020,DorodnShallowWater2021,DorodnGreenNaghdi2020,KaptsovModifiedShallowWater2022,KaptsovInvariantSMHD2023,CheviakovInvariantWaves2020}, a difficulty was observed in constructing nonlinear polynomial and rational schemes on the 9-point stencil that possess difference analogues of both the energy and momentum conservation laws, with conservation-law multipliers approximating $u_t$ and $u_x$, respectively. (For brevity, such schemes will be referred to below as \emph{schemes with two multipliers}.) The constructions mentioned above were mainly carried out using the difference analogue of the direct method on orthogonal uniform meshes.

\medskip

The present work reviews these earlier results and investigates polynomial schemes with two multipliers. A simplification of the direct method for rational schemes, which may be useful in related problems, is given in \ref{app:Rational}: under certain assumptions, the corresponding problem reduces to a linear system with constant coefficients.

\medskip

To determine how essential the restrictions on schemes with two multipliers are,
the $9$-point $3\times3$ stencil was considered first. It was shown that no nonlinear
polynomial schemes of this type exist for a polynomial of any finite degree~(Theorem~\ref{th:33all} in~\ref{app:Proof3x3}).
For degrees not exceeding~$14$ this was also verified by direct computation.

For the extended $4\times4$ stencil, schemes of degree not exceeding~$5$ were considered,
and the same result was obtained.

For the $5\times5$ stencil, however, the situation changes substantially: for polynomial schemes of degree not exceeding 4, a large number of schemes with two multipliers arise. All schemes approximating second-order equations correspond to equations of Monge--Amp\`ere type. Several such schemes were presented, including the ``dihedral'' scheme~(\ref{MAScheme}) admitting the discrete transformations of the group~$D_4$.
Several third-order equations also arise, all involving the Monge--Amp\`ere equation as a component, together with a large number of higher-order equations. These higher-order cases are not considered in detail, as their complete enumeration would require considerable space and, apparently, they do not have an evident physical interpretation.

\smallskip

Narrow stencils with different numbers of nodes in the two independent variables were also considered,
up to the size $3\times7$, including their $7\times3$ counterparts. For these stencils, nonlinear polynomial schemes of degree not exceeding~$3$
exist only on the full $3\times7$ stencil (respectively $7\times3$). They approximate the fourth-order
equations~(\ref{Class2.1}) and~(\ref{Class2.2}), as well as $14$ further equations of orders from~$4$ to~$8$.

\medskip

The case of non-centered multipliers~(\ref{LambdasCell}), defined on a cell, was also considered. The results were generally similar to those obtained for the multipliers~(\ref{Lam1Lam2}). For the Monge--Amp\`ere equations, however, schemes were found on somewhat smaller stencils, although these schemes do not admit the $D_4$ symmetry.

\smallskip

The results obtained are formulated as propositions and summarized in
Tables~\ref{tab:summary}, \ref{tab:55}, and~\ref{tab:narrow}. They do not
provide a complete classification of all possible classes of schemes on all
possible stencils. Nevertheless, they substantially extend our understanding
of the restrictions on schemes with two multipliers. In particular, they
confirm the initial hypothesis that nonlinear polynomial schemes with two
multipliers cannot be constructed on the $3\times3$ stencil: for polynomials
of any finite degree this is proved in~\ref{app:Proof3x3}. For
rational schemes the question remains open.

From the practical point of view, the results indicate that polynomial schemes
on small stencils are unlikely to preserve both the energy and momentum
conservation laws for nonlinear equations of the type considered here. This
refers to schemes given by a single polynomial relation with constant coefficients on an orthogonal
uniform mesh, with the multipliers~(\ref{Lam1Lam2}) or~(\ref{LambdasCell}); the result is rigorous for the
$3\times3$ stencil and polynomials of any degree, and holds for larger stencils within the examined range of degrees.
When constructing conservative schemes, one therefore has to choose between
preserving only one of these laws, using larger stencils, or turning to other classes of
schemes, for example, rational ones.

It also appears that, apart from the linear case, the class of schemes with two
multipliers for hyperbolic equations is rather narrow and is not represented by
polynomials of relatively low degree.

%
%

\medskip

In the final section, a symmetry analysis of the ``dihedral'' scheme~(\ref{MAScheme})
for the Monge--Amp\`ere equation was carried out. The scheme admits 12 of the 15
generators of the Lie algebra of the corresponding differential equation. Some
generators of this algebra violate the conditions~(\ref{mesh_conds_uni}),
(\ref{mesh_conds_ortho}) under which an orthogonal uniform mesh is preserved,
and the difference analogue of the direct method, which is based on such meshes,
does not detect the corresponding symmetries. Consequently, fully invariant
schemes should not in general be expected from this approach when some of the
admitted symmetries violate the mesh conditions. Some of these symmetries are
nevertheless admitted by the scheme, but no corresponding difference
conservation laws appear to exist.

Among the generators compatible with the mesh, five give rise to conservation
laws in both the differential and difference cases. The corresponding
conservation laws and multipliers were obtained explicitly.

Although numerical computations are not the main subject of this work, a numerical check of the scheme~(\ref{MAScheme}) was carried out. Tests on exact solutions of the Monge--Amp\`ere equation, including a solution that is not homogeneous, confirm the second order of approximation of the scheme and show that the five difference conservation laws are satisfied on these solutions with second order of accuracy.

\medskip

For the $3\times3$ stencil, the nonexistence of nonlinear schemes with two multipliers has been reduced to an
algebraic statement about polynomials and proved for arbitrary degree (\ref{app:Proof3x3}),
so that in this case the limitation has a clear algebraic origin.
The same appears to hold for the $4\times4$ stencil, where the problem seems to reduce to the $3\times3$ case
(see Remark~\ref{rem:proof4x4} in~\ref{app:Proof3x3}); a rigorous proof is not given here, and
Proposition~\ref{prop:44} remains established by direct computation for degrees $\leqslant5$.
For larger stencils, the observed behavior, in particular the appearance of a large number of schemes
on the $5\times5$ stencil, is so far established only by direct computation.
Whether it has a similar algebraic explanation, and how the picture changes for polynomials of higher degree
on larger stencils and for rational schemes, appears to be a natural direction for further study.

\section*{Acknowledgments}

The author is grateful to Professors V.A.~Dorodnitsyn and S.V.~Meleshko for valuable discussions and
comments on the problem considered in this paper.

%

%
%

\bigskip
\par
\bigskip


\appendix

\renewcommand{\thetheorem}{\Alph{section}.\arabic{theorem}}
\renewcommand{\theequation}{\Alph{section}.\arabic{equation}}

\section{General proof for polynomials of arbitrary degree on the 9-point stencil}
\label{app:Proof3x3}

For stencils of small size the nonexistence of nonlinear schemes can be proved directly
for polynomials of arbitrary degree. For this it is convenient to leave aside the
difference Euler operator and to use another, equivalent definition of a divergence:
a polynomial is a divergence if its sum over any rectangular region of the mesh
reduces to terms attached to the boundary nodes only, all interior terms cancelling~\cite{GodunovRyabenkii1987}.
The arguments below overlap substantially with the difference variational
complex of~\cite{HydonVariationalComplex2004}. We deliberately avoid its language
and give simpler proofs in terms of monomials and their shifts; in particular,
this concerns the divergence criterion for polynomials with constant coefficients
(Lemma~\ref{lem:divcrit}), which is a monomial form of the exactness
result of~\cite{HydonVariationalComplex2004}.
The proofs are presented in detail. Each of them is an ad hoc argument tailored to the specific stencil, which makes it possible to obtain a complete proof for the case at hand and to avoid cumbersome general formulations. To illustrate the ideas involved, the proof is preceded by simple examples and by the proof of a simpler model case, the $2\times2$ stencil, before it is applied to the $3\times3$ stencil.
For brevity of exposition, the variables $t$ and $x$ will sometimes be referred to as the time and space coordinates.

\subsection{Divergences and orbit sums}

Let $P$ be a polynomial in the values $u^{n+j}_{m+k}$ with constant coefficients.
Recall that $P$ is a (total difference) divergence if
$P=(\dtm{S}-1)G+(\dhm{S}-1)F$ for some polynomials $F,G$.
An equivalent description is the following (see, e.g.,~\cite{GodunovRyabenkii1987}).
Let $n_{\min}<n_{\max}$ and $m_{\min}<m_{\max}$ be integers, and let
\[
R=\{(n,m)\in\mathbb Z^2:\ n_{\min}\leqslant n\leqslant n_{\max},\ m_{\min}\leqslant m\leqslant m_{\max}\}
\]
be the rectangle of mesh nodes bounded by them (it contains all nodes between the
endpoints). Summation of $P$ over $R$ gives
\[
\sum_{(n,m)\in R}P^n_m
=\sum_{m_{\min}\leqslant m\leqslant m_{\max}}\bigl(G^{n_{\max}+1}_m-G^{n_{\min}}_m\bigr)
+\sum_{n_{\min}\leqslant n\leqslant n_{\max}}\bigl(F^{n}_{m_{\max}+1}-F^{n}_{m_{\min}}\bigr).
\]
Indeed, for fixed $m$ the sum $\sum_{n_{\min}\leqslant n\leqslant n_{\max}}(G^{n+1}_m-G^n_m)$
contains each $G^n_m$ with $n_{\min}<n\leqslant n_{\max}$ twice, once with the plus sign
and once with the minus sign, so only $G^{n_{\max}+1}_m$ and $G^{n_{\min}}_m$ remain;
the same holds for $F$ along each row.
Thus the sum of a divergence over a rectangle is determined by $G$ and $F$ at the
first and last rows and columns, and all interior terms cancel.
The converse also holds, and it is convenient to express it in terms of
monomials. For this we introduce the following notions.

\medskip

Let $N$ be a monomial in the variables $u^{n+j}_{m+k}$. Its
\emph{orbit} $\mathcal O(N)$ is the set of all monomials obtained from $N$ by a shift of
all indices by the same vector $(p,q)\in\mathbb Z^2$, i.e., by
${\dtm{S}}^p{\dhm{S}}^q$. When the monomial is not important we simply write $\mathcal O$.
For example, $u^n_m u^{n+1}_m$ and
$u^{n+4}_{m-2}u^{n+5}_{m-2}$ belong to the same orbit (two nodes adjacent in~$t$),
whereas $u^n_m u^{n}_{m+1}$ belongs to a different orbit (two nodes adjacent in~$x$).
The relation ``$N'$ is obtained from $N$ by a shift ${\dtm{S}}^p{\dhm{S}}^q$'' is
reflexive ($p=q=0$), symmetric (shift by $(-p,-q)$) and transitive (shifts compose),
hence an equivalence relation; its classes are the orbits, so that every monomial lies
in exactly one orbit.

\medskip

Let $P=\sum_N p_N N$ be a polynomial written as a linear combination
of distinct monomials $N$ with coefficients $p_N$ (we put $p_N=0$ for monomials that do not occur in $P$).
The \emph{orbit sum} of $P$ over an orbit $\mathcal O$ is the sum of the coefficients of $P$ at all
monomials of this orbit:
\[
\sigma_{\mathcal O}(P)=\sum_{N\in\mathcal O}p_N .
\]
The sum is finite, since only finitely many monomials of $\mathcal O$ occur in $P$.
For example, if $P=3\,u^n_m u^{n+1}_m-2\,u^{n+4}_{m-2}u^{n+5}_{m-2}+u^n_m u^n_{m+1}$, then the first two
monomials lie in one orbit and the third in another, so that the orbit sum over the first orbit is $3-2=1$ and
over the second it is $1$; over all other orbits it is zero.

\medskip

\begin{lemma}[Divergence criterion]\label{lem:divcrit}
A polynomial $P$ with constant coefficients is a divergence if and only if
$\sigma_{\mathcal O}(P)=0$ for every orbit $\mathcal O$.
\end{lemma}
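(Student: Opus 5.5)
The proof has two directions, both resting on one observation: a shift ${\dtm{S}}^p{\dhm{S}}^q$ maps each orbit to itself. It therefore preserves every orbit sum, since it acts on the monomials of an orbit as a permutation of their index positions. Orbit sums are also linear in $P$.

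\emph{Necessity.} Suppose $P=(\dtm{S}-1)G+(\dhm{S}-1)F$. Then for every orbit $\mathcal O$ we get
\[
\sigma_{\mathcal O}(P)=\sigma_{\mathcal O}(\dtm{S}G)-\sigma_{\mathcal O}(G)+\sigma_{\mathcal O}(\dhm{S}F)-\sigma_{\mathcal O}(F)=0.
\]
To make this rigorous, note that $\dtm{S}$ maps distinct monomials to distinct monomials, so the coefficient of $\dtm{S}N$ in $\dtm{S}G$ equals the coefficient of $N$ in $G$. Summing over $N\in\mathcal O$ gives the same total, because $N\mapsto\dtm{S}N$ is a bijection of $\mathcal O$. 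The same argument applies to $\dhm{S}$.

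\emph{Sufficiency.} Assume all orbit sums vanish. I would group $P$ by orbits, $P=\sum_{\mathcal O}P_{\mathcal O}$, a finite sum. It is enough to show that each $P_{\mathcal O}$ with $\sigma_{\mathcal O}(P_{\mathcal O})=0$ is a divergence. Fix a representative $N_0\in\mathcal O$. Every $N\in\mathcal O$ has the form $N={\dtm{S}}^p{\dhm{S}}^qN_0$, and the pair $(p,q)$ is unique: if a nonconstant monomial is fixed by a nonzero shift, comparing the lexicographically minimal node of the monomial and of its shift gives a contradiction. Constants form their own orbit, where $\sigma=0$ forces the coefficient to be $0$. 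Then
\[
P_{\mathcal O}=\sum p_N\,{\dtm{S}}^p{\dhm{S}}^qN_0=\Bigl(\sum p_N\bigl({\dtm{S}}^p{\dhm{S}}^q-1\bigr)\Bigr)N_0+\sigma_{\mathcal O}\,N_0,
\]
and the last term is zero. It remains to write each ${\dtm{S}}^p{\dhm{S}}^q-1$ in the form $(\dtm{S}-1)A+(\dhm{S}-1)B$, with $A,B$ Laurent polynomials in the shifts. For this I would use
\[
{\dtm{S}}^p{\dhm{S}}^q-1=({\dtm{S}}^p-1){\dhm{S}}^q+({\dhm{S}}^q-1),
\]
together with the telescoping identities
\[
S^p-1=(S-1)(1+S+\dots+S^{p-1})\quad (p>0),\qquad S^{-p}-1=-(S-1)(S^{-1}+\dots+S^{-p})\quad (p>0).
\]
Applying these operators to $N_0$ gives polynomials $G,F$ with constant coefficients, so $P$ is a divergence.

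The only delicate point I expect is the uniqueness of the shift vector, i.e. that nonconstant monomials have trivial stabilizer. It is needed so that the coefficients $p_N$ are well attached to shifts. Everything else is algebraic bookkeeping. Negative shifts are harmless, since the paper's definition of a divergence allows $F,G$ to be arbitrary polynomials on a finite stencil.
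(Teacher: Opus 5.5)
Your proof is correct and matches the paper's argument: necessity because $\dtm{S}N$ and $N$ lie in the same orbit with opposite coefficients, and sufficiency by writing $P=\sum_{\mathcal O}\sum_{N\in\mathcal O}p_N(N-N_{\mathcal O})$ and telescoping each difference along unit shifts, which your factorization of ${\dtm{S}}^p{\dhm{S}}^q-1$ makes explicit. The step you flag as delicate, uniqueness of the shift vector, is not actually needed: any choice of $(p,q)$ with $N={\dtm{S}}^p{\dhm{S}}^qN_0$ works in the identity, so that step is harmless but superfluous.
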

\begin{proof}
Let $P=(\dtm{S}-1)G$ with $G=\sum g_N N$. Then
$P=\sum_N g_N(\dtm{S}N-N)$, and the two monomials $\dtm{S}N$ and $N$ lie in the same orbit
and enter with coefficients $+g_N$ and $-g_N$; the same holds for $\dhm{S}-1$.
Hence every orbit sum of a divergence vanishes.

Conversely, let all orbit sums of $P$ vanish. Only finitely many orbits contain
monomials of $P$, and only they matter; for each of them choose a representative
$N_{\mathcal O}$. Then
\[
P=\sum_{\mathcal O}\ \sum_{N\in\mathcal O}p_N\,(N-N_{\mathcal O}),
\]
where $\mathcal O$ runs over the orbits meeting $P$, since
$\sum_{N\in\mathcal O}p_N N_{\mathcal O}=\sigma_{\mathcal O}(P)\,N_{\mathcal O}=0$.
Each difference $N-N_{\mathcal O}$ is a sum of differences $N'-\dtm{S}N'$ and
$N'-\dhm{S}N'$ along a path of unit shifts from $N_{\mathcal O}$ to $N$, i.e., a divergence.
\end{proof}

\medskip

For a polynomial $\Phi$ the condition that $\Lambda$ is a conservation-law
multiplier thus reads: \emph{in $\Lambda\Phi$ all orbit sums vanish}.
Here both $\Lambda$ and $\Phi$ are polynomials with constant coefficients
(independent of $n$ and $m$), so that $\Lambda\Phi$ is again such a polynomial and the
criterion applies to it. Since $\Lambda$ is fixed, this is a system of linear equations
for the coefficients of $\Phi$, one equation for each orbit. Note that a coefficient of
$\Phi$ enters several equations: if $\Lambda$ has $r$ terms, a monomial $M$ of $\Phi$
gives $r$ monomials of $\Lambda\Phi$, which lie, as a rule, in different orbits.

For example, let $\Lambda=\Lambda_1=u^{n+1}_m-u^{n-1}_m$ and let $M=u^n_mu^n_{m+1}$ be a monomial of $\Phi$.
Then
\[
\Lambda M=u^{n+1}_m u^n_m u^n_{m+1}\;-\;u^{n-1}_m u^n_m u^n_{m+1}.
\]
The two terms lie in different orbits. Indeed, a shift moves all nodes of a monomial by the same vector,
so it preserves their positions relative to each other. In the first term the other two nodes
$u^{n+1}_m$ and $u^n_{m+1}$ are located relative to $u^n_m$ at $(+1,0)$ and $(0,+1)$ in the coordinates $(t,x)$,
in the second term the nodes $u^{n-1}_m$ and $u^n_{m+1}$ are at $(-1,0)$ and $(0,+1)$.
Hence the coefficient of $M$ enters two different equations, one for each orbit.

Conversely, different monomials of $\Phi$ may produce monomials of the same orbit:
for $\Phi=\alpha\,u^{n-1}_m+\beta\,u^{n+1}_m$ we obtain
\[
\Lambda\Phi=(\alpha-\beta)\,u^{n-1}_mu^{n+1}_m-\alpha\,(u^{n-1}_m)^2+\beta\,(u^{n+1}_m)^2 ,
\]
where $(u^{n-1}_m)^2$ and $(u^{n+1}_m)^2$ lie in the same orbit, so that its orbit sum is $\beta-\alpha$,
and the orbit sum of $u^{n-1}_mu^{n+1}_m$ is $\alpha-\beta$. Both give the single equation $\alpha=\beta$.

\medskip

Now let two multipliers $\Lambda_1$, $\Lambda_2$ be required. The polynomial $\Phi$ must then
satisfy both conditions: all orbit sums of $\Lambda_1\Phi$ vanish and all orbit sums of
$\Lambda_2\Phi$ vanish. These conditions form two systems of linear equations for the same
unknown coefficients of $\Phi$. Taken together, they define a single linear system, whose
solution space is the intersection of the solution spaces of the two systems. Each system
alone usually has many solutions, but their intersection can be much smaller, and it is this
overdetermination that limits the stencils and the degrees for which nonzero solutions exist.

A minimal example is the $2\times2$ stencil with the multipliers
$\widetilde{\Lambda}_1=u^{n+1}_m-u^n_m$ and $\widetilde{\Lambda}_2=u^n_{m+1}-u^n_m$. Denote
\[
u=u^n_m,\qquad \hat u=u^{n+1}_m,\qquad u_+=u^n_{m+1},\qquad \hat u_+=u^{n+1}_{m+1},
\]
so that $\widetilde{\Lambda}_1=\hat u-u$, $\widetilde{\Lambda}_2=u_+-u$, and take a linear
$\Phi=\alpha u+\beta\hat u+\gamma u_++\delta\hat u_+$. Then
\[
\widetilde{\Lambda}_1\Phi=(\alpha-\beta)\,u\hat u+(\beta\hat u^2-\alpha u^2)+(\delta\,\hat u\hat u_+ -\gamma\,uu_+) + \gamma\,\hat uu_+-\delta\,u\hat u_+ .
\]
The monomials of $\widetilde{\Lambda}_1\Phi$ are distributed over the orbits as follows:
\begin{center}
\def\arraystretch{1.1}
\begin{tabular}{l|l|l}
orbit & description & orbit sum \\ \hline
$\{u^2,\hat u^2\}$ & one node, shifted in $t$ & $\beta-\alpha$ \\
$\{u\hat u\}$ & two nodes adjacent in $t$ & $\alpha-\beta$ \\
$\{uu_+,\hat u\hat u_+\}$ & two nodes adjacent in $x$ & $\delta-\gamma$ \\
$\{\hat uu_+\}$ & diagonal pair & $\gamma$ \\
$\{u\hat u_+\}$ & anti-diagonal pair & $-\delta$
\end{tabular}
\end{center}
Hence $\widetilde{\Lambda}_1$ is a multiplier if and only if $\gamma=\delta=0$ and $\alpha=\beta$,
i.e., $\Phi=\alpha(u+\hat u)$; indeed $(\hat u-u)(\hat u+u)=\hat u^2-u^2=(\dtm{S}-1)u^2$.
It remains to impose the second multiplier on this family. We have
\[
\widetilde{\Lambda}_2\Phi=(u_+-u)\,\alpha(u+\hat u)=\alpha\bigl(uu_++\hat uu_+-u^2-u\hat u\bigr),
\]
and the four monomials lie in four different orbits, with orbit sums $\alpha$, $\alpha$, $-\alpha$ and $-\alpha$ for $uu_+$, $\hat uu_+$, $u^2$ and $u\hat u$, respectively. All of them must vanish, so $\alpha=0$.
Hence there is no nonzero linear $\Phi$ with both multipliers. Only constants remain~(for $\Phi=1$ the products $\widetilde{\Lambda}_i\Phi=\widetilde{\Lambda}_i$ are divergences).

\bigskip

The considered example shows how the orbit sums of two multipliers are combined, but it is restricted
to linear $\Phi$. As a more substantial, yet still simple, illustration we now take the same
$2\times2$ stencil and the same multipliers and prove that no nonconstant polynomial $\Phi$ of any degree
satisfies both conditions. The same technique is then applied to the $3\times3$ stencil.


\subsection{A model case: the $2\times2$ stencil with cell multipliers}

\bigskip

Let $u=u^n_m$, $\hat u=u^{n+1}_m$, $u_+=u^n_{m+1}$, $\hat u_+=u^{n+1}_{m+1}$
be the values at the nodes of one cell, as in the example above, and let $\Phi$ be a polynomial in $u,\hat u,u_+,\hat u_+$
with constant coefficients. Each of the two cell multipliers~(\ref{LambdasCell}),
$\widetilde\Lambda_1=\hat u-u$ and $\widetilde\Lambda_2=u_+-u$, alone admits a nonzero $\Phi$; we show
that the orbit sums of $\widetilde\Lambda_1\Phi$ and $\widetilde\Lambda_2\Phi$ cannot vanish simultaneously
unless $\Phi$ is a constant.

\begin{proposition}\label{prop:cell22}
Let $\Phi$ be a polynomial in $u,\hat u,u_+,\hat u_+$ with constant coefficients such that both
$\widetilde\Lambda_1\Phi = (\hat u-u)\Phi$ and $\widetilde\Lambda_2\Phi = (u_+-u)\Phi$ are divergences. Then $\Phi$ is a constant.
\end{proposition}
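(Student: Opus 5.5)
We reduce first to a homogeneous $\Phi$. As in Section~\ref{sec:Classif}, both multipliers are linear with constant coefficients, so the degree-$(s+1)$ part of $\widetilde\Lambda_i\Phi$ comes from the degree-$s$ part $\Phi_s$ alone. The shift operators preserve total degree, so orbits never mix degrees and Lemma~\ref{lem:divcrit} applies degree by degree. It therefore suffices to take $\Phi$ homogeneous of degree $s\geqslant 1$ and show that $\Phi=0$; the constant part ($s=0$) is always allowed, since $\widetilde\Lambda_i$ themselves are divergences.

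The key step is a support argument on the cell. Every monomial of $\widetilde\Lambda_i\Phi$ involves only the four nodes $u,\hat u,u_+,\hat u_+$. Call such a monomial \emph{full} if its nodes meet both time layers and both columns; this includes the diagonal pair $\hat u u_+$ and the anti-diagonal pair $u\hat u_+$. A nonzero shift moves at least one node of a full monomial out of the cell. Hence a full orbit contains exactly one monomial supported in the cell, and by Lemma~\ref{lem:divcrit} the coefficient of every full monomial in $\widetilde\Lambda_i\Phi$ must vanish. Consequently
\[
(\hat u-u)\Phi=A(u,\hat u)+B(u_+,\hat u_+)+C(u,u_+)+E(\hat u,\hat u_+)=:R_1,
\]
where $A,B,C,E$ are homogeneous of degree $s+1$ and supported on the four edges of the cell. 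Similarly $(u_+-u)\Phi=R_2$, with $R_2$ also a sum of edge polynomials.

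Next we exploit the divisibility relations. Substituting $\hat u=u$ gives $A(u,u)+B(u_+,\hat u_+)+C(u,u_+)+E(u,\hat u_+)\equiv0$. Comparing monomials in the three independent variables $u,u_+,\hat u_+$ forces strong constraints: $B$ and $E$ can contain only pure powers, which then cancel against pure powers in $C$ and $A$. The same applies to $R_2$ with $u_+=u$. Moreover, $R_1(u_+-u)=R_2(\hat u-u)$. Its full monomials (products mixing the edges) must cancel separately, and this should force the mixed monomials of $A,B,C,E$ to have very restricted form, essentially multiples of telescoping differences such as $\hat u^{\,s+1}-u^{s+1}$.

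The remaining orbit sums are then imposed on the non-full orbits: single-node orbits (the powers $u^{s+1},\hat u^{s+1},u_+^{s+1},\hat u_+^{s+1}$ all lie in one orbit), row-type orbits (monomials in $u,u_+$ identified with those in $\hat u,\hat u_+$), and column-type orbits. I expect this final bookkeeping to be the main obstacle. It means combining the edge-structure constraints with these few linear orbit-sum equations, as in the linear example preceding the proposition, where the orbit sums $\pm\alpha$ forced $\alpha=0$.

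I would try first to show that $R_1$ must be a linear combination of $\hat u^{\,k}u^{s+1-k}$-type terms on a single edge pair that telescopes. Dividing by $\hat u-u$ then leaves $\Phi$ as a polynomial in $u,\hat u$ only, of the form $\alpha\sum_k\hat u^{\,k}u^{s-k}$. Imposing $\widetilde\Lambda_2$ on it, the monomials $u_+\hat u^{\,k}u^{s-k}$ with $k\geqslant1$ are full, so their coefficients must vanish. This forces $\alpha=0$, and hence $\Phi=0$ for every $s\geqslant1$.
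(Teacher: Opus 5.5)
Your opening steps are sound. The homogeneous reduction is correct. Every full monomial of $\widetilde\Lambda_i\Phi$ is the only monomial of its orbit lying in the cell, so it has zero coefficient. Hence $R_1=(\hat u-u)\Phi$ is supported on the four edges, and $R_1|_{\hat u=u}\equiv0$.

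\textbf{The gap.} The proof stops exactly where the content lies. That $R_1$ telescopes is only something you ``would try to show''. The identity $R_1(u_+-u)=R_2(\hat u-u)$ is followed only by ``this should force''. The non-full orbit sums are named as the main obstacle but never evaluated. As written, nothing excludes monomials containing $u_+$ or $\hat u_+$ from $\Phi$.

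Your last step also needs the specific form $\alpha\sum_k\hat u^{\,k}u^{s-k}$. Once $\Phi$ is known to depend only on $u,\hat u$, the full monomials $u_+\hat u^{\,k}u^{s-k}$ with $k\geqslant1$ kill every coefficient except that of $u^s$. So either the coefficient of $u^s$ must be tied to that of $\hat u^s$, or one more orbit sum must be computed.

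Your summary of the substitution also stops short. It gives more than ``$B$ and $E$ are pure powers'': $R_1$ contains no monomial involving $u_+$ or $\hat u_+$ at all. The mixed terms of $C$ and the powers $u_+^{s+1}$, $\hat u_+^{s+1}$ have nothing to cancel against.

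\textbf{How to close it with your own tools.} No detour through $R_2$ is needed.
\begin{enumerate}
\item Under $\hat u\mapsto u$, the edge monomials of $R_1$ involving $u_+$ or $\hat u_+$ are sent to pairwise distinct monomials that are not powers of $u$. These are the monomials of $C$ with positive $u_+$-degree, all of $B$, and those of $E$ with positive $\hat u_+$-degree. Their coefficients therefore vanish, so $R_1$ is a polynomial in $u,\hat u$.
\item Write $\Phi=\Phi_0(u,\hat u)+\Phi_1$, where every monomial of $\Phi_1$ involves $u_+$ or $\hat u_+$. Then $(\hat u-u)\Phi_1$ consists only of such monomials, so it must vanish, and $\Phi_1=0$.
\item For $1\leqslant i\leqslant s$, the orbit of $u^i\hat u^{\,s+1-i}$ meets the cell only in this monomial and in $u_+^i\hat u_+^{\,s+1-i}$, which is absent from $R_1$. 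Hence the mixed coefficients of $R_1$ vanish. Together with $R_1(u,u)=0$ this gives $R_1=\alpha(\hat u^{\,s+1}-u^{s+1})$, and your final step then applies.
\end{enumerate}

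\textbf{Comparison with the paper.} Completed this way, your route is genuinely different. The paper works with the coefficients $\kappa(M)$ of $\Phi$. Its Step 1 removes monomials containing $u_+,\hat u_+$ by chains of full monomials, which is the coefficient-level version of your division by $\hat u-u$. Its Step 2 is the row-orbit computation in item 3 above. It finishes with the $t\leftrightarrow x$ symmetry rather than with full monomials of $\widetilde\Lambda_2\Phi$. Your divisibility argument is shorter on the cell. The chain formulation, however, is the one the paper reuses for the $3\times3$ stencil (Lemma~\ref{lem:chain}).
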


\begin{proof}
Both multipliers are homogeneous of degree~1, and the orbit sums of
Lemma~\ref{lem:divcrit} are taken separately in each degree; hence the homogeneous parts of $\Phi$
satisfy the hypotheses separately, and it suffices to show that a homogeneous $\Phi$ of degree $s\geqslant1$ vanishes.
Write
\[
\Phi=\sum_M \kappa(M)\,M,\qquad M=u^{\alpha}\hat u^{\beta}u_+^{\gamma}\hat u_+^{\delta},\quad \alpha+\beta+\gamma+\delta=s,
\]
where $M$ runs over all monomials of degree $s$ in the four variables and $\kappa(M)$ is the coefficient of $M$
(zero if $M$ does not occur in $\Phi$). Then
\[
(\hat u-u)\Phi=\sum_M\kappa(M)\,\hat uM-\sum_M\kappa(M)\,uM
=\sum_N\bigl[\kappa(N/\hat u)-\kappa(N/u)\bigr]\,N ,
\]
where $N$ runs over all monomials of degree $s+1$ in the four variables, and a term is omitted
if $N$ is not divisible by $\hat u$, respectively by $u$.
The last equality is obtained by changing the summation variable:
in the first sum we put $N=\hat uM$, i.e., $M=N/\hat u$ (only monomials $N$ divisible by $\hat u$ occur), and in the second sum
$N=uM$, i.e., $M=N/u$.
Hence the coefficient of $N$ in $(\hat u-u)\Phi$ is\footnote{To clarify the situation, consider a simple example with $s=2$ and the multiplier $\hat u-u$. Let
\[
\Phi=x_0u^2+x_1u\hat u+x_2\hat u^2,
\]
i.e., $\kappa(u^2)=x_0$, $\kappa(u\hat u)=x_1$, $\kappa(\hat u^2)=x_2$, and all other coefficients are zero.
The product $(\hat u-u)\Phi$ is a polynomial of degree~$3$, and the coefficient of each monomial $N$ is
$\kappa(N/\hat u)-\kappa(N/u)$, where a term is omitted if $N$ is not divisible by the corresponding variable
(equivalently, the corresponding $\kappa$ is zero). For the four monomials $N$ of degree~$3$ in $u,\hat u$ this gives
\begin{align*}
N=u^3:&\quad \kappa(u^3/\hat u)-\kappa(u^3/u)=0-\kappa(u^2)=-x_0,\\
N=u^2\hat u:&\quad \kappa(u^2\hat u/\hat u)-\kappa(u^2\hat u/u)=\kappa(u^2)-\kappa(u\hat u)=x_0-x_1,\\
N=u\hat u^2:&\quad \kappa(u\hat u^2/\hat u)-\kappa(u\hat u^2/u)=\kappa(u\hat u)-\kappa(\hat u^2)=x_1-x_2,\\
N=\hat u^3:&\quad \kappa(\hat u^3/\hat u)-\kappa(\hat u^3/u)=\kappa(\hat u^2)-0=x_2.
\end{align*}
Here $\kappa(u^3/\hat u)=0$ and $\kappa(\hat u^3/u)=0$, since $u^3$ is not divisible by $\hat u$ and $\hat u^3$ is not divisible by $u$.
Hence
\[
(\hat u-u)\Phi=-x_0u^3+(x_0-x_1)u^2\hat u+(x_1-x_2)u\hat u^2+x_2\hat u^3.
\]
} 
\begin{equation}\label{coefN}
\kappa(N/\hat u)-\kappa(N/u).
\end{equation}

Since all monomials of $(\hat u-u)\Phi$ are monomials in $u,\hat u,u_+,\hat u_+$, the orbit sum of $(\hat u-u)\Phi$ over the orbit of $N$ is the sum
of~(\ref{coefN}) over those shifts of $N$ that are again monomials in $u,\hat u,u_+,\hat u_+$.
By Lemma~\ref{lem:divcrit}, $(\hat u-u)\Phi$ is a divergence if and only if all these orbit sums vanish.

\smallskip

Consider orbits of monomials in $u,\hat u,u_+,\hat u_+$.
For a given $N$ we need only those of its shifts that are again monomials in $u,\hat u,u_+,\hat u_+$.
These variables are the nodes $(0,0),(1,0),(0,1),(1,1)$ of one cell (coordinates $(t,x)$),
so there are four cases.
\begin{enumerate}
\item[{(i)}]
The nodes of $N$ occupy both $t$-positions and both $x$-positions (``full''; for example, $N$ contains $u$ and $\hat u_+$, or $\hat u$ and $u_+$, or three of the variables).
Any nonzero shift moves a node out of the cell, so $N$ is the only such shift of itself and the orbit sum equals~(\ref{coefN}).

\item[{(ii)}]
$N$ contains only $u,\hat u$ or only $u_+,\hat u_+$ (one $x$-position). The shifts are $N$ and the monomial $N'$ obtained by the replacement $u\leftrightarrow u_+$, $\hat u\leftrightarrow\hat u_+$.
Exactly one of the two monomials, say $N_0$, is written in terms of $u,\hat u$; the other one contains neither $u$ nor $\hat u$, so it is divisible by neither of them, and its coefficient in $(\hat u-u)\Phi$ is zero (both terms in~(\ref{coefN}) are omitted).
Hence the orbit sum equals the coefficient of $N_0$, i.e.,
\[
\kappa(N_0/\hat u)-\kappa(N_0/u).
\]

\item[{(iii)}]
$N$ contains only $u,u_+$ or only $\hat u,\hat u_+$ (one $t$-position). The shifts are $N$ and the monomial obtained by $u\leftrightarrow\hat u$, $u_+\leftrightarrow\hat u_+$. As in case~(ii), the orbit sum is computed directly from~(\ref{coefN}); it is not needed below.

\item[{(iv)}]
$N$ is a power of a single variable; the shifts are the four powers $u^{s+1}$, $\hat u^{s+1}$, $u_+^{s+1}$, $\hat u_+^{s+1}$.
\end{enumerate}

The remaining argument consists of three steps: Step~1 shows that $\Phi$ does not contain $u_+$ and $\hat u_+$,
Step~2 determines $\Phi$ from the condition for $\widetilde\Lambda_1$, and Step~3 uses the condition for $\widetilde\Lambda_2$.
Only the cases (i) and (ii) are needed in the proof: in Step~1 the monomials $N$ are of type (i), in Step~2 of type (ii);
the cases (iii) and (iv) are listed for completeness.

\paragraph*{Step 1: $\kappa(M)=0$ if $M$ contains $u_+$ or $\hat u_+$}

Recall that $M=u^{\alpha}\hat u^{\beta}u_+^{\gamma}\hat u_+^{\delta}$ is an arbitrary monomial of degree $s$;
we show that $\kappa(M)=0$ whenever $\gamma\geqslant1$ or $\delta\geqslant1$.

(a) Let $\gamma\geqslant1$. Put $M_j=M\,(\hat u/u)^j$ for $0\leqslant j\leqslant\alpha$, i.e., $M_j$ is obtained from $M$
by replacing $j$ factors $u$ by $\hat u$; thus $M_0=M$ and $M_\alpha=\hat u^{\alpha+\beta}u_+^{\gamma}\hat u_+^{\delta}$ contains no factor $u$.
Every $M_j$ still contains $u_+$, so the monomial $N_j=\hat uM_j$ contains $\hat u$ and $u_+$;
it occupies both time levels and both space positions and falls into case~(i).
Its orbit sum is therefore~(\ref{coefN}), i.e., $\kappa(M_j)-\kappa(M_{j+1})$ (since $N_j/\hat u=M_j$ and $N_j/u=M_{j+1}$), and the equations for $N_0,\dots,N_\alpha$ form the chain
\begin{align*}
&\kappa(M_j)-\kappa(M_{j+1})=0,\qquad j=0,\dots,\alpha-1,\\
&\kappa(M_\alpha)=0 .
\end{align*}
The last equation holds because $N_\alpha$ is not divisible by $u$, so the term $\kappa(N_\alpha/u)$ is absent.
Going from the last equation upwards, we obtain $\kappa(M_\alpha)=0$, then $\kappa(M_{\alpha-1})=0$, and so on, up to $\kappa(M)=\kappa(M_0)=0$.
(For $\alpha=0$ the chain consists of the last equation only.)

(b) Let $\gamma=0$, $\delta\geqslant1$. Now the roles of $u$ and $\hat u$ are exchanged.
Put $M_j=M\,(u/\hat u)^j$ for $0\leqslant j\leqslant\beta$.
Every $N_j=uM_j$ contains $u$ and $\hat u_+$, so it falls into case~(i), and~(\ref{coefN}) gives the same chain
\begin{align*}
&\kappa(M_{j+1})-\kappa(M_j)=0,\qquad j=0,\dots,\beta-1,\\
&-\kappa(M_\beta)=0 ,
\end{align*}
the last equation because $N_\beta$ is not divisible by $\hat u$.
Hence $\kappa(M)=0$ again.

\paragraph*{Step 2: the condition for $\hat u-u$}
By Step 1, $\Phi=\sum_{i=0}^{s}\kappa_i\,u^i\hat u^{\,s-i}$ with $\kappa_i=\kappa(u^i\hat u^{\,s-i})$.
Take $N=u^i\hat u^{\,s+1-i}$ with $1\leqslant i\leqslant s$; it falls into case (ii), and its orbit contains also $N'=u_+^i\hat u_+^{\,s+1-i}$.
The coefficient~(\ref{coefN}) of $N$ is $\kappa_i-\kappa_{i-1}$, and that of $N'$ is zero, since $N'$ is divisible neither by $u$ nor by $\hat u$.
Hence the orbit sum is $\kappa_i-\kappa_{i-1}$, and its vanishing gives $\kappa_i=\kappa_{i-1}$ for $i=1,\dots,s$. Thus all coefficients $\kappa_i$ coincide; denoting their common value by $\lambda$, we obtain
\begin{equation}\label{Phi1}
\Phi=\lambda\sum_{i=0}^{s}u^i\hat u^{s-i}=\lambda\,\frac{\hat u^{s+1}-u^{s+1}}{\hat u-u}.
\end{equation}
(Conversely, $(\hat u-u)\Phi=\lambda(\hat u^{s+1}-u^{s+1})=\lambda(\dtm{S}-1)u^{s+1}$ is a divergence, but only the necessity is needed here.
For $s=1$ this is the solution $\Phi=\lambda(u+\hat u)$ of the example in the previous subsection.)

\paragraph*{Step 3: both multipliers}
The interchange $\hat u\leftrightarrow u_+$ (that is, of $t$ and $x$) maps shifts to shifts and therefore divergences to divergences, and it turns the condition for $u_+-u$
into the condition for $\hat u-u$. By Steps 1--2 it follows that $\Phi=\lambda'\sum_{i=0}^{s}u^iu_+^{\,s-i}$
for some constant $\lambda'$. The monomial $\hat u^s$ occurs in~(\ref{Phi1}) with the coefficient $\lambda$ and does not occur in this expression,
hence $\lambda=0$ and $\Phi=0$ for every $s\geqslant1$.

\end{proof}

To illustrate how Step~1 works on concrete monomials, consider the following example.
Recall that the coefficients~$\kappa(M)$ are unknown: we do not know in advance which monomials occur in $\Phi$.
Equation~(\ref{coefN}) must hold for \emph{every} monomial $N$, whether or not $N$ occurs in
$(\hat u-u)\Phi$, and we use it as a system of linear equations for the unknowns $\kappa(M)$.
Let $s=3$ and $M=u^2u_+$; we show that $\kappa(M)=0$, i.e., that $M$ does not occur in $\Phi$.
Equation~(\ref{coefN}) for the monomials
$N=u^2\hat uu_+$, $u\hat u^2u_+$ and $\hat u^3u_+$ reads
\begin{align*}
N=u^2\hat uu_+ &:\quad \kappa(u^2u_+)-\kappa(u\hat uu_+)=0,\\
N=u\hat u^2u_+ &:\quad \kappa(u\hat uu_+)-\kappa(\hat u^2u_+)=0,\\
N=\hat u^3u_+ &:\quad \kappa(\hat u^2u_+)=0 .
\end{align*}
Each of these $N$ contains both $\hat u$ and $u_+$, so it falls into case~(i) and
(\ref{coefN}) is indeed the whole orbit sum. In the last line $N$ is not divisible by $u$,
so the term $\kappa(N/u)$ is absent. Going from the last equation upwards, we obtain
$\kappa(\hat u^2u_+)=0$, then $\kappa(u\hat uu_+)=0$, and finally $\kappa(u^2u_+)=0$.
For case~(b), say $M=\hat u^2\hat u_+$, the argument is the same with $u$ and $\hat u$ exchanged:
the monomials $N=u\hat u^2\hat u_+$, $u^2\hat u\hat u_+$, $u^3\hat u_+$ give
$\kappa(\hat u^2\hat u_+)=\kappa(u\hat u\hat u_+)=\kappa(u^2\hat u_+)=0$.

\smallskip

The presence of $u_+$ or $\hat u_+$ in $M$ is essential. Consider $s=3$ and $M=u^3$, which contains neither of them.
The chain starts in the same way: the monomials $N=u^3\hat u$, $u^2\hat u^2$ and $u\hat u^3$ contain both $u$ and $\hat u$ and no other variables,
so they fall into case~(ii); the second monomial of their orbit, $N'$, is written in terms of $u_+,\hat u_+$ and has coefficient zero,
so the orbit sum is again~(\ref{coefN}), and we obtain
\begin{align*}
N=u^3\hat u &:\quad \kappa(u^3)-\kappa(u^2\hat u)=0,\\
N=u^2\hat u^2 &:\quad \kappa(u^2\hat u)-\kappa(u\hat u^2)=0,\\
N=u\hat u^3 &:\quad \kappa(u\hat u^2)-\kappa(\hat u^3)=0 .
\end{align*}
However, the next monomial of the chain is $N=\hat u^4$, a power of a single variable, so it falls into case~(iv).
It is not the only monomial of its orbit: besides $\hat u^4$, the orbit contains $u^4$ (the shift in time), $u_+^4$ and $\hat u_+^4$.
The coefficients of $\hat u^4$ and $u^4$ in $(\hat u-u)\Phi$ are $\kappa(\hat u^3)$ and $-\kappa(u^3)$, respectively,
and those of $u_+^4$ and $\hat u_+^4$ are zero, so the orbit sum gives
\[
N=\hat u^4:\quad \kappa(\hat u^3)-\kappa(u^3)=0 ,
\]
instead of an equation that sets a coefficient equal to zero.
Together with the previous three equations this gives only $\kappa(u^3)=\kappa(u^2\hat u)=\kappa(u\hat u^2)=\kappa(\hat u^3)$:
the chain closes into a cycle and does not force these coefficients to vanish.
Indeed, they do not vanish in general: $\Phi=u^3+u^2\hat u+u\hat u^2+\hat u^3$ satisfies the condition for $\hat u-u$
(see Step~2).


\subsection{The $3\times3$ stencil with centered multipliers}
\label{sec:33proof}

\smallskip

\noindent
Proposition~\ref{prop:33} is a reformulation of the following theorem.

\begin{theorem} \label{th:33all}
Let $\Phi$ be a polynomial in the nine variables of the $3\times3$ stencil with
constant coefficients such that $(u^{n+1}_m-u^{n-1}_m)\Phi$ and
$(u^{n}_{m+1}-u^{n}_{m-1})\Phi$ are total difference divergences. Then
$\deg\Phi\leqslant 1$.
\end{theorem}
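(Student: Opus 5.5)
We follow the scheme of the proof of Proposition~\ref{prop:cell22}. First we reduce to a homogeneous $\Phi$ of degree $s\geqslant2$, since orbit sums are taken degree by degree; the aim is then to show $\Phi=0$. We label the nine nodes by their offsets $(j,k)\in\{-1,0,1\}^2$. By Lemma~\ref{lem:divcrit}, the condition for $\Lambda_1=u^{n+1}_m-u^{n-1}_m$ reads: for every monomial $N$ of degree $s+1$, the sum over those shifts $N'$ of $N$ that lie in the $3\times3$ window of
\[
\kappa\bigl(N'/u^{n+1}_m\bigr)-\kappa\bigl(N'/u^{n-1}_m\bigr)
\]
vanishes. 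The condition for $\Lambda_2$ is the same with $u^n_{m\pm1}$. The interchange $t\leftrightarrow x$ maps one system onto the other, so it suffices to analyse the $\Lambda_1$ system in detail and transfer the conclusions.

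\textbf{Step 1 (support reduction).} Following Step~1 of the $2\times2$ case, we look for monomials $N$ whose window contains only $N$ itself ("full" monomials, which occupy both extreme rows and both extreme columns). For such $N$ the orbit sum is a single coefficient difference. We build chains $M_j=M\,(u^{n+1}_m/u^{n-1}_m)^j$ that terminate when the factor $u^{n-1}_m$ is exhausted. If $M$ already touches both columns $m\pm1$, then $N_j=u^{n+1}_mM_j$ is full, because it touches row $n+1$ through the multiplier node and row $n-1$ as long as $M_j$ does. We handle the remaining case by the mirrored chain through $u^{n-1}_m$.

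This gives $\kappa(M)=0$ for all monomials spanning both $x$-columns, except possibly when the chain loses contact with a row. In that exceptional case we will use the $\Lambda_2$ system, whose chains move along the centre row, so that combining the two systems forces the support into a small set.

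The goal of this step is to show that the support of $\Phi$ lies in monomials occupying at most two adjacent columns \emph{and} (by symmetry) at most two adjacent rows, unless the monomial passes through the centre in a specific way.

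\textbf{Step 2 (structure in each system).} On the reduced support, windows contain several shifts, and the orbit equations become relations of the form "sum of coefficient differences equals zero". We expect the $\Lambda_1$ condition to force $\Phi$, restricted to monomials in the central column variables $u^{n-1}_m,u^n_m,u^{n+1}_m$, into telescoping combinations analogous to~(\ref{Phi1}). These are polynomials $Q$ with $\Lambda_1Q=(\dtm S-1)(\cdot)$, such as $\sum_i (u^{n+1}_m)^i(u^{n-1}_m)^{s-i}$ mixed with powers of $u^n_m$, possibly with shifts in $x$. The $\Lambda_2$ condition gives the transposed family in the centre row. The final step will compare the coefficients of a monomial that appears in one family but cannot appear in the other, for instance $(u^{n+1}_m)^s$ versus row-only monomials, as in Step~3 of Proposition~\ref{prop:cell22}. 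For $s\geqslant2$ this should yield $\Phi=0$. For $s=1$ the comparison fails precisely because of the linear scheme $\check u_{tt}-u_{x\bar x}$, which explains the bound $\deg\Phi\leqslant1$.

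\textbf{Main obstacle.} The main difficulty is the bookkeeping of Step~1 on the $3\times3$ window, where orbits are no longer singletons for monomials confined to two rows or two columns. Chains can then close into cycles, as in the $u^3$ example, instead of terminating. Moreover, monomials involving the centre node $u^n_m$ behave differently, since both multipliers pass through it only via neighbours.

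We would first try to order monomials lexicographically by the exponents at the corner nodes. The aim is to show that each orbit equation containing a "maximal" monomial has exactly one unknown of that maximal type, which gives triangularity and hence vanishing by induction. Here we would use both systems at once wherever one of them alone yields a cycle.
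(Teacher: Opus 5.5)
\textbf{There is a genuine gap.} Your reduction to a homogeneous $\Phi_s$ with $s\geqslant2$, the orbit-sum equations and the chains through full monomials agree with Steps~0--2 of the paper's proof. From there on, however, the proposal is a plan rather than an argument: Step~2 and the final comparison are only stated as expectations, and the two places where the real work lies are not addressed.

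\textbf{Corner monomials inside one cell.} Take a corner monomial supported in that corner's $2\times2$ cell, e.g.\ $M=\check u_-^{\alpha}\check u^{\beta}u_-^{\gamma}u^{\delta}$ with $\alpha\geqslant1$. It is never reached by a chain of full monomials: every chain monomial $\hat uM_i$, $\check uM$, $u_\pm M_i$ stays in two rows or two columns. In particular $N=\hat uM$ always has the second window shift $N'=\hat u_+\check u^{\alpha}\check u_+^{\beta}u^{\gamma}u_+^{\delta}$, so~(\ref{eq:E1}) has three terms, $\kappa(M)=\kappa(K_1)+\kappa(K_2)$.

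The paper handles this in three moves:
\begin{itemize}
\item it kills $K_1$ and $K_2$ with Lemma~\ref{lem:corner1} (a corner monomial not contained in its cell vanishes), which is roughly what your Step~1 can reach;
\item it covers $\gamma\geqslant1$ by the $t\leftrightarrow x$ image;
\item it is then left with $\kappa(\check u_-u^{s-1})=\kappa(\hat u_+u_+^{s-1})$, which it closes only by the point reflection $(t,x)\to(-t,-x)$, sending $\hat u_+u_+^{s-1}$ to $\check u_-u_-^{s-1}$, whose coefficient is already zero.
\end{itemize}
Your lexicographic triangularity in the corner exponents does not replace this. A triangular relation only expresses the top unknown through lower ones, so proving vanishing by induction requires the lowest unknowns to vanish first. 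The lowest unknowns are the cross monomials, and their equations contain corner terms.

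\textbf{Order of elimination, and the final step.} Cross monomials become effectively rigid only after \emph{all} corner coefficients are known to vanish. For example, for $N=\hat u^2u_-u$ the window also contains the shifts $u^2\check u_-\check u$ and $u_+^2\check u\check u_+$, so~(\ref{eq:E1}) reads $\kappa(\hat uu_-u)=\kappa(\check u_-u^2)+\kappa(\check u_+u_+^2)$. Lemmas~\ref{lem:cross} and~\ref{lem:X} become available only after Lemma~\ref{lem:corner2}.

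Even then, for $s=2$ the rigid equations only give $\kappa(\check uu_-)=\kappa(\check uu_+)=\kappa(\hat uu_-)=\kappa(\hat uu_+)$. To conclude you need the non-rigid orbit $\{\check u^2u_-,\ \hat uu_+^2\}$ together with $\kappa(u_+^2)=0$.

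Your closing idea, comparing ``a monomial in one family but not in the other'', presupposes a characterization of the $\Lambda_1$-solutions that you never derive. The $\Lambda_1$ system alone has nonzero solutions in every degree, e.g.\ $\sum_i\hat u^{\,i}\check u^{\,s-i}$, so any contradiction has to come from interlacing the two systems at specific orbits. As written, the proposal does not yield $\Phi_s=0$.
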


The proof is divided into several relatively simple steps, with auxiliary lemmas introduced along the way.

The nine variables of the stencil are denoted
\[
\begin{array}{lll}
\check u_-=u^{n-1}_{m-1} & \check u=u^{n-1}_{m} & \check u_+=u^{n-1}_{m+1}\\
u_-=u^{n}_{m-1}   & u=u^{n}_{m}   & u_+=u^{n}_{m+1}\\
\hat u_-=u^{n+1}_{m-1} & \hat u=u^{n+1}_{m} & \hat u_+=u^{n+1}_{m+1}
\end{array}
\]
so that $\Lambda_1=\hat u-\check u$ and $\Lambda_2=u_+-u_-$.
The variables $\check u,\hat u,u_-,u_+,u$ form the \emph{cross}; the remaining four,
$\check u_\pm,\hat u_\pm$, are the \emph{corners}. The \emph{cell} of a corner is the
$2\times2$ block of the stencil containing it; for example, the cell of $\check u_-$ is
$\{\check u_-,\check u,u_-,u\}$.
For a monomial $M$ let $V(M)$ be the set of variables occurring in $M$.
A set of variables is \emph{full} if it contains a variable from the first row,
from the third row, from the first column and from the third column of the table above.
A monomial with a full set of variables cannot be shifted inside the stencil,
i.e., it is the only monomial of its orbit lying in the stencil.

As in the previous section, write $\Phi=\sum_M\kappa(M)\,M$, where $\kappa(M)$ is the
coefficient of $M$ (zero if $M$ does not occur in $\Phi$, and also if $M$ is not a monomial,
e.g.\ $M=N/\hat u$ with $\hat u\nmid N$). These coefficients are unknown;
the conditions of the theorem are a system of linear equations for them.

\paragraph*{Step 0: reduction}
Both multipliers are homogeneous of degree~1, and orbits consist of monomials of one degree.
Hence the homogeneous parts $\Phi_s$ of $\Phi$ satisfy the hypotheses separately,
and it suffices to show that $\Phi_s=0$ for every $s\geqslant2$. Fix such an $s$.

\paragraph*{Step 1: the equations}
As in the $2\times2$ case, the coefficient of a monomial $N$ of degree $s+1$ in $(\hat u-\check u)\Phi$ is
$\kappa(N/\hat u)-\kappa(N/\check u)$. Only monomials lying in the stencil occur in
$(\hat u-\check u)\Phi$, so the orbit sum over the orbit of $N$ gives, for every monomial $N$,
\begin{equation}\label{eq:E1}
\sum_{N'}\bigl[\kappa(N'/\hat u)-\kappa(N'/\check u)\bigr]=0,
\end{equation}
where $N'$ runs over all shifts of $N$ lying inside the stencil (including $N$ itself).
Likewise, for the second multiplier,
\begin{equation}\label{eq:E2}
\sum_{N'}\bigl[\kappa(N'/u_+)-\kappa(N'/u_-)\bigr]=0 .
\end{equation}
The symmetries $t\to-t$, $x\to-x$, $t\leftrightarrow x$ and the point reflection $(t,x)\to(-t,-x)$
map the pair $\{\Lambda_1,\Lambda_2\}$ into itself up to signs and interchange of its elements,
hence the image of a solution $\Phi$ under each of them is again a solution.
Consequently, a statement proved for all solutions holds also in each transformed form.
We call a monomial $N$ \emph{rigid}~(see Fig.~\ref{fig:rigid}) if no other shift of $N$ contributes to (\ref{eq:E1}), (\ref{eq:E2}),
so that for $N$ these equations read
\begin{equation}\label{eq:rigid}
\kappa(N/\hat u)=\kappa(N/\check u),\qquad \kappa(N/u_+)=\kappa(N/u_-).
\end{equation}
A monomial with a full set of variables is rigid.

\begin{figure}[ht]
\centering
\includegraphics[width=\textwidth]{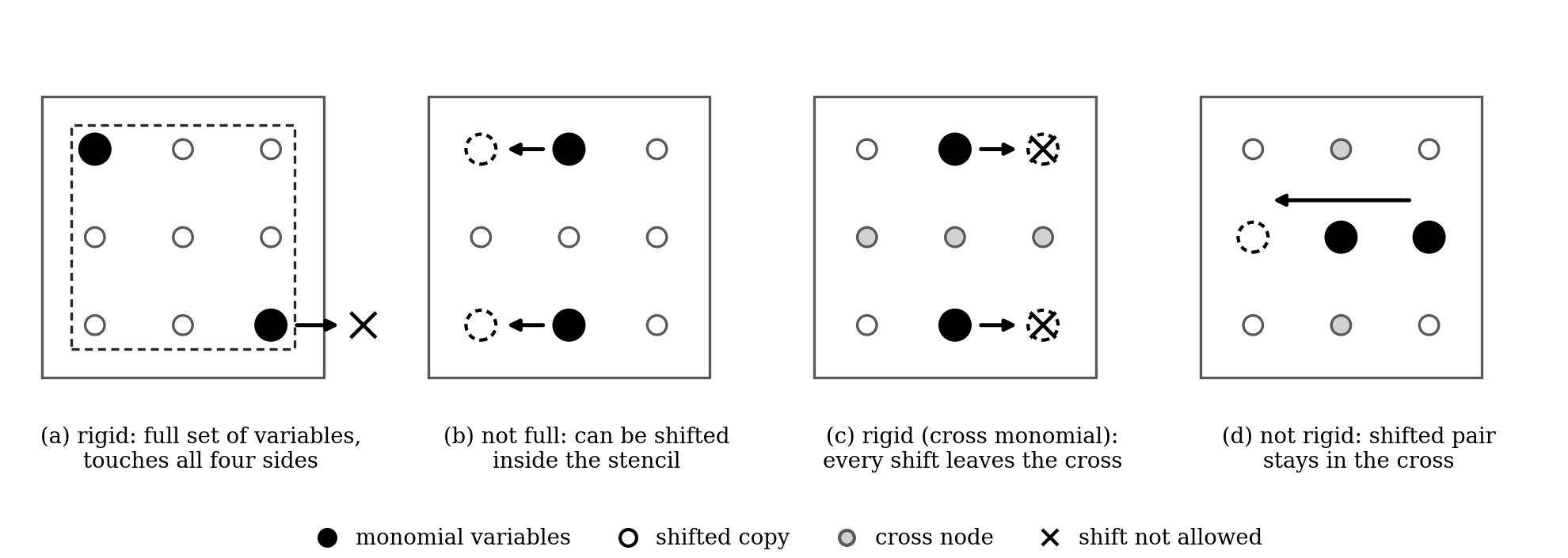}
\caption{Rigid monomials on the $3\times3$ stencil. Black circles are the variables of a monomial, open dashed circles are its shifted copies, gray nodes form the cross. (a)~A monomial with a full set of variables is rigid. (b)~A monomial that is not of this kind can be shifted inside the stencil. (c)~A monomial in the cross is rigid if every shift leaves the cross. (d)~A monomial in the cross is not rigid if some shift stays in the cross.}
\label{fig:rigid}
\end{figure}

\paragraph*{Step 2: chains}

\begin{lemma}[chain]\label{lem:chain}
Let $M$ be a monomial and let $k$ be the exponent of $\check u$ in $M$. Put $M_i=M(\hat u/\check u)^i$ for $0\leqslant i\leqslant k$.
If all monomials $N_i=\hat uM_i$ are rigid, then $\kappa(M)=0$.
The same holds with the pair $(\check u\to\hat u)$ replaced by $(\hat u\to\check u)$, $(u_-\to u_+)$ or $(u_+\to u_-)$,
with $\Lambda_1$ and $\Lambda_2$ used accordingly.
In particular, the assumption holds if $(V(M)\setminus\{\check u\})\cup\{\hat u\}$ is full,
and analogously for the other pairs.
\end{lemma}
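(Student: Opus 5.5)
The plan is to reproduce Step~1 of the proof of Proposition~\ref{prop:cell22}: a descending chain of rigid equations that terminates at a monomial with no factor $\check u$.

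\textbf{Coefficients along the chain.} By definition $N_i=\hat u M_i$, so $N_i/\hat u=M_i$. Since $M_i=M(\hat u/\check u)^i$ contains $\check u$ with exponent $k-i$, for $i<k$ we have
\[
N_i/\check u=\hat u M_i/\check u=M(\hat u/\check u)^{i+1}=M_{i+1}.
\]
For $i=k$ the monomial $N_k$ contains no factor $\check u$, so the term $\kappa(N_k/\check u)$ is absent, i.e.\ equal to zero by the convention on $\kappa$.

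\textbf{Using rigidity.} Each $N_i$ is assumed rigid, so only the first relation in~(\ref{eq:rigid}) is needed for it. This gives the chain
\[
\kappa(M_i)=\kappa(M_{i+1}),\quad 0\leqslant i<k,\qquad \kappa(M_k)=0.
\]
Reading it backwards yields $\kappa(M_k)=\kappa(M_{k-1})=\dots=\kappa(M_0)=\kappa(M)=0$. For $k=0$ the chain reduces to the single equation $\kappa(M)=0$.

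\textbf{Other pairs.} The pairs $(\hat u\to\check u)$, $(u_-\to u_+)$ and $(u_+\to u_-)$ follow by the same computation, with the roles of the variables permuted and~(\ref{eq:E2}) used in place of~(\ref{eq:E1}) for the $x$-pairs. Alternatively they follow from the symmetries $t\to-t$ and $t\leftrightarrow x$ noted in Step~1: these map solutions to solutions and preserve rigidity and fullness. Sign changes such as $\hat u-\check u\mapsto\check u-\hat u$ do not matter, because the conditions are homogeneous.

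\textbf{The fullness criterion.} This is the only point needing care. I must show that if $W=(V(M)\setminus\{\check u\})\cup\{\hat u\}$ is full, then every $N_i$ has a full variable set, which makes it rigid by the remark before the lemma.

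First, $V(N_i)\supseteq W$ for every $i$. Indeed, $N_i$ contains $\hat u$, and it keeps every variable of $M$ other than $\check u$, since only factors $\check u$ are replaced.

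Second, fullness is monotone under adding variables. A superset still meets the first and third rows and the first and third columns.

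Hence each $V(N_i)$ is full and each $N_i$ is rigid. Possibly $V(N_i)$ also contains $\check u$ (for $i<k$); this is harmless by monotonicity.

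The main obstacle is really only bookkeeping: making sure the absent term at the end of the chain is correctly read as zero. This is exactly what the convention $\kappa(N/\check u)=0$ for $\check u\nmid N$ provides.
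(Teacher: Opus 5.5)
Your proof is correct and follows essentially the same route as the paper. The first relation of (\ref{eq:rigid}) for each rigid $N_i$ gives the chain $\kappa(M_i)=\kappa(M_{i+1})$, which ends in $\kappa(M_k)=0$ because $\check u\nmid N_k$; the fullness criterion then follows from $V(N_i)\supseteq(V(M)\setminus\{\check u\})\cup\{\hat u\}$ and the fact that adding variables keeps a set full. Your remarks on the other pairs, by direct recomputation or via the stencil symmetries, are fine and spell out what the paper only states as ``the same holds''.
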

\begin{proof}
For $i<k$ the first equation of (\ref{eq:rigid}) for $N_i$ reads $\kappa(M_i)=\kappa(M_{i+1})$,
since $N_i/\hat u=M_i$ and $N_i/\check u=M_{i+1}$.
For $i=k$ the monomial $N_k$ is not divisible by $\check u$, so the equation reads $\kappa(M_k)=0$.
Hence $\kappa(M)=\kappa(M_1)=\dots=\kappa(M_k)=0$.

For the last claim, note that every $N_i$ contains $\hat u$ and all variables of $M$ other than $\check u$, i.e.,
$V(N_i)\supset(V(M)\setminus\{\check u\})\cup\{\hat u\}$.
If the latter set is full, then so is $V(N_i)$, since adding variables to a full set keeps it full.
Hence every $N_i$ is rigid.
\end{proof}

This is the argument of Step~1 in the $2\times2$ case. An example illustrates it.
Let $s=3$ and $M=\check u_-\check u\,u_+$. Here $k=1$, $M_1=\check u_-\hat u\,u_+$, and
$(V(M)\setminus\{\check u\})\cup\{\hat u\}=\{\check u_-,\hat u,u_+\}$ is full
(it meets the first and third rows and the first and third columns).
Equation~(\ref{eq:rigid}) for $N_0=\check u_-\check u\hat u\,u_+$ and $N_1=\check u_-\hat u^2u_+$ reads
\begin{align*}
N_0:&\quad \kappa(\check u_-\check u\,u_+)-\kappa(\check u_-\hat u\,u_+)=0,\\
N_1:&\quad \kappa(\check u_-\hat u\,u_+)=0,
\end{align*}
where in the second line $N_1$ is not divisible by $\check u$. Hence $\kappa(M)=0$, i.e., $M$ does not occur in $\Phi$.

\paragraph*{Step 3: monomials containing a corner vanish}

\begin{lemma}\label{lem:corner1}
Let $\check u_-\in V(M)$ and let $V(M)$ not be contained in the cell of $\check u_-$. Then $\kappa(M)=0$.
The same holds for the other three corners and their cells.
\end{lemma}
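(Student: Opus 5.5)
The plan is a direct application of the chain lemma (Lemma~\ref{lem:chain}), using its final clause that the hypothesis holds whenever a suitably modified variable set is full. The point is that $\check u_-$ already lies in both the first row and the first column of the stencil. So it only remains to supply a variable from the third row and a variable from the third column. One of these is given by the assumption that $V(M)$ leaves the cell of $\check u_-$; the other will be produced by the chain itself.

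Since $V(M)$ is not contained in $\{\check u_-,\check u,u_-,u\}$, the monomial $M$ contains one of the variables $\check u_+,u_+,\hat u_-,\hat u,\hat u_+$. Each of these lies in the third row or in the third column, which gives two cases.

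\emph{Case 1: $M$ contains a third-row variable} $w\in\{\hat u_-,\hat u,\hat u_+\}$. We apply the chain lemma with the pair $(u_-\to u_+)$ and the multiplier $\Lambda_2$. The set $(V(M)\setminus\{u_-\})\cup\{u_+\}$ still contains $\check u_-$ and $w$, since neither of them equals $u_-$. It therefore meets the first row and the first column (via $\check u_-$), the third row (via $w$) and the third column (via $u_+$), so it is full. Hence every $N_i=u_+M(u_+/u_-)^i$ is rigid, and Lemma~\ref{lem:chain} gives $\kappa(M)=0$. This also covers the degenerate situation where $u_-$ does not occur in $M$: then the chain has length zero, and the single rigid equation $\kappa(u_+M/u_+)=\kappa(u_+M/u_-)=0$ already gives the result.

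\emph{Case 2: $M$ contains a third-column variable} $w\in\{\check u_+,u_+,\hat u_+\}$. Symmetrically, we use the pair $(\check u\to\hat u)$ with $\Lambda_1$. The set $(V(M)\setminus\{\check u\})\cup\{\hat u\}$ contains $\check u_-$, $w$ and $\hat u$, because $w\neq\check u$. It therefore meets all four extreme rows and columns, so it is full, and again $\kappa(M)=0$.

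For the other three corners I would not repeat the argument. I would invoke the symmetries $t\to-t$, $x\to-x$ and the point reflection noted in Step~1. These preserve the solution set and map $\check u_-$ and its cell to each of the other corners and their cells. I do not expect a genuine obstacle here. The only step to check carefully is that the variable that guarantees fullness is not the one removed by the chain replacement. This is why the replaced variable ($u_-$, respectively $\check u$) is chosen along the axis that is \emph{not} already supplied by the hypothesis.
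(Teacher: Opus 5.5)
Your proof is correct and matches the paper's argument: the paper also splits on whether $V(M)$ meets the third column (chain $\check u\to\hat u$ with $\Lambda_1$) or the third row (chain $u_-\to u_+$ with $\Lambda_2$), checks that the modified set is full because $\check u_-$ supplies the first row and first column, and then applies Lemma~\ref{lem:chain}. The other corners are likewise handled by the symmetries $t\to-t$ and $x\to-x$; your added point reflection is just the composition of these two.
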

\begin{proof}
Let $S=V(M)$. The condition $S\not\subset\{\check u_-,\check u,u_-,u\}$ means that $S$ contains
a variable of the third column ($\check u_+,u_+,\hat u_+$) or of the third row ($\hat u_-,\hat u,\hat u_+$).
In the first case, $(S\setminus\{\check u\})\cup\{\hat u\}$ contains $\check u_-$ (first row, first column), $\hat u$ (third row)
and a variable of the third column, so it is full.
In the second case, $(S\setminus\{u_-\})\cup\{u_+\}$ contains $\check u_-$, $u_+$ (third column)
and a variable of the third row, so it is full again.
By Lemma~\ref{lem:chain}, $\kappa(M)=0$.
The other corners follow by the symmetries $t\to-t$, $x\to-x$.
\end{proof}

In particular, a monomial containing two different corners vanishes, because the cell of one corner
does not contain the other. So a monomial with a corner $x$ that survives has $V(M)$ inside the cell of $x$.

\begin{lemma}\label{lem:corner2}
Every monomial containing a corner has coefficient zero.
\end{lemma}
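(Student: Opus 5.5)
By the symmetries $t\to-t$, $x\to-x$ listed in Step~1, it suffices to treat the corner $\check u_-$. By Lemma~\ref{lem:corner1}, the only monomials with a corner whose coefficients are not yet known to vanish are those with $\check u_-\in V(M)\subset\{\check u_-,\check u,u_-,u\}$, together with their images in the other three cells. Write such a monomial as
\[
M=\check u_-^{a}\check u^{b}u_-^{c}u^{d},\qquad a\geqslant1,\quad a+b+c+d=s\geqslant2 .
\]
I would prove $\kappa(M)=0$ by induction on the exponent $a$ of the corner, taken in decreasing order from $a=s$ down to $a=1$. The chain of Lemma~\ref{lem:chain} cannot be used directly, because the monomials $N_i=\hat u M_i$ are no longer rigid. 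I would therefore run the chain anyway and account for the extra shifted terms in~(\ref{eq:E1}).

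\textbf{Shifts of $N_i$.} Each $N_i$ occupies the first and third rows and only the first two columns. Its only other shift inside the stencil is therefore $N_i'$, the shift by $+1$ in $x$. Under this shift
\[
\check u_-\mapsto\check u,\quad \check u\mapsto\check u_+,\quad u_-\mapsto u,\quad u\mapsto u_+,\quad \hat u\mapsto\hat u_+ .
\]
Since $N_i$ contains no $\hat u_-$, the shifted monomial $N_i'$ contains no $\hat u$, so its contribution to~(\ref{eq:E1}) is just $-\kappa(N_i'/\check u)$. The monomial $N_i'/\check u$ contains the corner $\hat u_+$.
\begin{itemize}
\item If it also contains $\check u_+$ (that is, $b\geqslant1$) or $\check u$ (that is, $a\geqslant2$), then it has two corners, or a corner with a variable outside that corner's cell. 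Lemma~\ref{lem:corner1} then gives $\kappa(N_i'/\check u)=0$, so the chain closes exactly as in Lemma~\ref{lem:chain} and yields $\kappa(M)=0$.
\item This leaves the case $a=1$, $b=0$, where $M=\check u_-u_-^{c}u^{d}$. Here $N_i'/\check u=\hat u_+u^{c}u_+^{d}$ lies in the cell of $\hat u_+$, so a genuine linear relation remains. The chain gives
\[
\kappa(\check u_-u_-^{c}u^{d})=\kappa(\hat u_+u^{c}u_+^{d}).
\]
\end{itemize}

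\textbf{The residual case $a=1$, $b=0$.} In the same way, running the $(u_-\to u_+)$ chain with $\Lambda_2$ forces a monomial with $c\geqslant1$ to vanish, except in the analogous residual configuration. Combining the two chains, only $M=\check u_-u^{s-1}$ and its images under the symmetries survive, and these are coupled among the four cells into a cycle. To break the cycle I would use the equations for the monomials $N=\check u_-u^{s}$ and $N=\check u_-u^{s-1}u_+$. Together with the point-reflection image relating $\check u_-u^{s-1}$ to $\hat u_+u^{s-1}$, these should turn the cyclic identification into a relation of the form $\kappa=-\kappa$ or $\kappa=0$. The reason to expect this is that $\Lambda_1$ and $\Lambda_2$ change sign under the reflections $t\to-t$ and $x\to-x$, respectively.

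\textbf{Main obstacle.} The hardest step is this last one: closing the cycle among the four single-corner monomials $\check u_-u^{s-1}$, $\check u_+u^{s-1}$, $\hat u_-u^{s-1}$, $\hat u_+u^{s-1}$. The $x$-shifts here land on cross monomials such as $\check u u^{s-1}$ and $\check u\,u_+^{s-1}$, which are not rigid. I would first write out explicitly the orbit equations~(\ref{eq:E1}) and~(\ref{eq:E2}) for the handful of monomials involved (for small $s$, checking against the direct computations reported for degrees up to $14$). I would then look for a signed combination of these equations in which the cross-monomial coefficients cancel, leaving a nontrivial multiple of $\kappa(\check u_-u^{s-1})$.
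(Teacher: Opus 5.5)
\textbf{There is a genuine gap: the residual case $M=\check u_-u^{s-1}$ is left unproved, and the mechanism you propose for it would not work as stated.} Everything before that is sound. Your shift analysis of $N=\hat uM$ is correct: only the $+1$ shift in $x$ stays in the stencil, and it contributes only $-\kappa(N'/\check u)$. For $a=1$, $b=0$ you correctly obtain
\[
\kappa(\check u_-u_-^{c}u^{d})=\kappa(\hat u_+u^{c}u_+^{d}),
\]
which is exactly the equation the paper uses.

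You call the residual case the ``main obstacle''. You propose to break a cycle among $\check u_-u^{s-1}$, $\check u_+u^{s-1}$, $\hat u_-u^{s-1}$, $\hat u_+u^{s-1}$, expecting $\kappa=-\kappa$ because $\Lambda_1,\Lambda_2$ change sign under reflections. A reflection maps a solution $\Phi$ to another solution; the sign of $\Lambda_i$ is irrelevant, since divergences form a linear space. So it relates coefficients of two different polynomials, never two coefficients of the same $\Phi$.

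More importantly, no cycle has to be broken. Substitute $c=0$, $d=s-1$ into your own relation: it gives $\kappa(\check u_-u^{s-1})=\kappa(\hat u_+u_+^{s-1})$. The partner is $\hat u_+u_+^{s-1}$, not the reflected image $\hat u_+u^{s-1}$. Under the point reflection $(t,x)\to(-t,-x)$, the monomial $\hat u_+u_+^{s-1}$ is the image of $\check u_-u_-^{s-1}$. That monomial has $c=s-1\geqslant1$ (this is where $s\geqslant2$ enters), so by your $\Lambda_2$-version of the argument it has zero coefficient in every solution. Applying this to the reflected solution gives $\kappa(\hat u_+u_+^{s-1})=0$, hence $\kappa(\check u_-u^{s-1})=0$. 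This one line is exactly how the paper finishes.

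Your other candidate would also have worked. Equation~(\ref{eq:E2}) for $N=\check u_-u^{s-1}u_+$ has only one other shift in the stencil, $u_-\hat u^{s-1}\hat u_+$, and it gives $\kappa(\check u_-u^{s-1})=\kappa(\hat u^{s-1}\hat u_+)$. Here $\hat u^{s-1}\hat u_+$ is the reflected image of $\check u_-\check u^{s-1}$, which has $b\geqslant1$ and so vanishes. As written, though, the proposal does not carry this out and leaves the case open.

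A smaller inaccuracy concerns $a=1$, $b\geqslant1$. There the chain does not close ``exactly as in Lemma~\ref{lem:chain}''. At the last link $i=b$, the quotient $N_b'/\check u=\hat u_+^{\,b+1}u^{c}u_+^{d}$ contains neither $\check u$ nor $\check u_+$ and lies in the cell of $\hat u_+$, so Lemma~\ref{lem:corner1} does not kill it. No chain is needed, however. When $b\geqslant1$, $M_1$ contains $\check u_-$ and $\hat u$, so $\kappa(M_1)=0$ by Lemma~\ref{lem:corner1}. The single equation~(\ref{eq:E1}) for $N_0=\hat uM$ then already gives $\kappa(M)=0$ whenever $a\geqslant2$ or $b\geqslant1$, as in the paper.
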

\begin{proof}
By the symmetries $t\to-t$ and $x\to-x$ it suffices to consider
$M=\check u_-^{\alpha}\check u^{\beta}u_-^{\gamma}u^{\delta}$ with $\alpha\geqslant1$.
Take $N=\hat uM$. Its variables lie in the first two columns and occupy all three rows,
so, besides $N$, the only shift of $N$ lying in the stencil is the shift by one column,
\[
N'=\check u^{\alpha}\check u_+^{\beta}u^{\gamma}u_+^{\delta}\hat u_+ .
\]
Since $\hat u\nmid N'$, equation~(\ref{eq:E1}) reads
\[
\kappa(M)-\kappa(K_1)-\kappa(K_2)=0,\qquad
K_1=\hat u\,\check u_-^{\alpha}\check u^{\beta-1}u_-^{\gamma}u^{\delta},\quad
K_2=\check u^{\alpha-1}\check u_+^{\beta}u^{\gamma}u_+^{\delta}\hat u_+ ,
\]
where $K_1$ is absent for $\beta=0$.
The monomial $K_1$ contains $\check u_-$ and $\hat u$, which is outside the cell of $\check u_-$,
so $\kappa(K_1)=0$ by Lemma~\ref{lem:corner1}.
The monomial $K_2$ contains the corner $\hat u_+$, and also either the corner $\check u_+$ (if $\beta\geqslant1$)
or $\check u$ (if $\alpha\geqslant2$), which is outside the cell $\{\hat u_+,\hat u,u_+,u\}$ of $\hat u_+$;
in both cases $\kappa(K_2)=0$ by Lemma~\ref{lem:corner1}.
Hence $\kappa(M)=0$ if $\alpha\geqslant2$ or $\beta\geqslant1$.
Applying this to the image of $\Phi$ under $t\leftrightarrow x$
(which exchanges $\check u\leftrightarrow u_-$, $\hat u\leftrightarrow u_+$ and hence the exponents $\beta$ and $\gamma$)
we get $\kappa(M)=0$ also when $\gamma\geqslant1$.
The remaining monomials are $M=\check u_-u^{s-1}$. For them $K_1$ is absent and
$\kappa(M)=\kappa(K_2)=\kappa(u_+^{s-1}\hat u_+)$.
The point reflection $(t,x)\to(-t,-x)$ maps $u_+^{s-1}\hat u_+$ to $\check u_-u_-^{s-1}$, which has $\gamma=s-1\geqslant1$
and therefore has a zero coefficient in every solution, in particular in the image of $\Phi$.
Thus $\kappa(u_+^{s-1}\hat u_+)=0$ and $\kappa(M)=0$.
\end{proof}

\paragraph*{Step 4: monomials in the cross}
By Lemma~\ref{lem:corner2} only monomials with $V(M)\subset\{\check u,\hat u,u_-,u_+,u\}$ remain.
A shift $N'$ of a monomial $N$ contributes to (\ref{eq:E1}), (\ref{eq:E2}) only if $N'/\hat u$ or $N'/\check u$
(respectively $N'/u_\pm$) can have a nonzero coefficient.
If $N'$ contains a corner, so do these quotients, and their coefficients vanish.
Hence only shifts with $V(N')\subset$ cross matter, and a cross monomial $N$ is rigid
whenever it has no shift $N'\neq N$ in the stencil with $V(N')\subset$ cross.

\begin{lemma}\label{lem:cross}
A cross monomial $N$ is rigid if $|V(N)|\geqslant3$, or $V(N)=\{\check u,\hat u\}$, or $V(N)=\{u_-,u_+\}$.
\end{lemma}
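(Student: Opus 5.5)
The plan is to reduce the claim to a purely geometric statement about the five nodes of the cross, and then settle that statement by a short case analysis. By the discussion preceding the lemma (which uses Lemma~\ref{lem:corner2}), a shift $N'$ of $N$ whose variables include a corner contributes nothing to (\ref{eq:E1}) or (\ref{eq:E2}): all the quotients $N'/\hat u$, $N'/\check u$, $N'/u_\pm$ still contain that corner and so have zero coefficient. It therefore suffices to show that, under each hypothesis of the lemma, no nonzero shift $(p,q)\in\mathbb Z^2$ maps the node set $V(N)$ into the cross. Then (\ref{eq:E1}) and (\ref{eq:E2}) reduce to (\ref{eq:rigid}).

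I will use coordinates $(t,x)$ relative to the centre, so the cross is $\{(0,0),(\pm1,0),(0,\pm1)\}$ and the nodes are $u=(0,0)$, $\hat u=(1,0)$, $\check u=(-1,0)$, $u_+=(0,1)$, $u_-=(0,-1)$.

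\emph{Opposite pair.} Suppose first that $V(N)\supset\{\check u,\hat u\}=\{(-1,0),(1,0)\}$.
\begin{itemize}
\item If $p\neq0$, one of the two nodes is sent to $t=\pm2$, which lies outside the stencil.
\item If $p=0$ and $q\neq0$, the node $(1,0)$ goes to $(1,q)$, which is a corner or lies outside the stencil.
\end{itemize}
Hence no nonzero shift keeps $V(N)$ inside the cross. The case $V(N)\supset\{u_-,u_+\}$ is identical after the interchange $t\leftrightarrow x$. This covers the two special sets listed in the lemma, and also every set with $|V(N)|\geqslant3$ that contains an opposite pair.

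\emph{Three nodes, no opposite pair.} If $|V(N)|\geqslant3$ and $V(N)$ contains no opposite pair, then $V(N)$ cannot contain three of the four arms, since any three of them include an opposite pair. So $V(N)$ contains the centre $u$ and two perpendicular arms. Using the symmetries $t\to-t$ and $x\to-x$, I may assume these arms are $\hat u=(1,0)$ and $u_+=(0,1)$. A shift keeping $(0,0)$ in the cross must be one of the four unit vectors, and each fails:
\begin{itemize}
\item $(1,0)$ sends $(1,0)$ to $(2,0)$, outside the stencil;
\item $(-1,0)$ sends $(0,1)$ to $(-1,1)$, a corner;
\item $(0,1)$ sends $(1,0)$ to $(1,1)$, a corner;
\item $(0,-1)$ sends $(1,0)$ to $(1,-1)$, a corner.
\end{itemize}
So again the only admissible shift is the trivial one, and $N$ is rigid.

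There is no real obstacle here. The only point needing care is the justification, already given before the lemma, that shifts containing a corner may be discarded. This relies on Lemma~\ref{lem:corner2} holding for every solution, including the transformed ones.
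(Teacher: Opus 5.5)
Your proof is correct and follows the paper's route. You first discard shifts containing a corner via Lemma~\ref{lem:corner2}, and then check geometrically that no nonzero shift keeps $V(N)$ inside the cross. The paper does that check in one step, by noting that any nonzero shift maps at most two cross nodes into the cross, whereas you split according to whether $V(N)$ contains an opposite pair; both are the same finite verification.
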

\begin{proof}
For any nonzero shift, at most two nodes of the cross are mapped into the cross
(this is verified by a direct check of the possible shifts).
A monomial $N$ with $|V(N)|\geqslant3$ has at least three distinct variables, which would all have to be mapped into the cross,
so it cannot be shifted.
The pairs $\{\check u,\hat u\}$ and $\{u_-,u_+\}$ are the only pairs of cross nodes at distance~$2$
in the $t$- and $x$-directions, so they cannot be shifted into the cross either.
\end{proof}

\begin{lemma}\label{lem:X}
Let $M$ be a cross monomial.
(a) If $V(M)$ contains two of $u_-,u_+,u$, then $\kappa(M)=0$.
(b) If $V(M)$ contains two of $\check u,\hat u,u$, then $\kappa(M)=0$.
\end{lemma}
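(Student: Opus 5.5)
The plan is to derive both parts from the chain Lemma~\ref{lem:chain}, with the needed rigidity supplied by Lemma~\ref{lem:cross} rather than by fullness.

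For (a), let $M$ be a cross monomial whose variable set contains two of $u_-,u_+,u$. I would use the chain for the pair $(\check u\to\hat u)$, that is, for the multiplier $\Lambda_1$. Let $k$ be the exponent of $\check u$ in $M$ and put $M_i=M(\hat u/\check u)^i$ and $N_i=\hat uM_i$ for $0\leqslant i\leqslant k$. The replacement $\check u\to\hat u$ does not touch $u_-,u_+,u$. Hence every $N_i$ contains $\hat u$ together with the two chosen variables from $\{u_-,u_+,u\}$, so $|V(N_i)|\geqslant 3$. Moreover every $N_i$ is a cross monomial, since $\hat u$ and all variables of $M_i$ lie in the cross. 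By Lemma~\ref{lem:cross}, each $N_i$ is therefore rigid.

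The chain argument then gives $\kappa(M)=\kappa(M_1)=\dots=\kappa(M_k)=0$. The last equality holds because $N_k$ is not divisible by $\check u$, so the rigid equation for $N_k$ reduces to $\kappa(M_k)=0$.

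For (b), I would apply the symmetric chain for the pair $(u_-\to u_+)$ with the multiplier $\Lambda_2$. Now $N_i=u_+M_i$, and these monomials contain $u_+$ and two of $\check u,\hat u,u$, all unaffected by the replacement $u_-\to u_+$. So again $|V(N_i)|\geqslant3$ and each $N_i$ is rigid by Lemma~\ref{lem:cross}. Alternatively, (b) follows from (a) applied to the image of $\Phi$ under the symmetry $t\leftrightarrow x$, which maps solutions to solutions and exchanges $\{u_-,u_+\}$ with $\{\check u,\hat u\}$.

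The only point needing care, and the one I expect to be the main (if mild) obstacle, is that ``rigid'' in Step~4 has a weakened meaning: a cross monomial may still have shifts inside the stencil that contain corners. I must check that such shifts do not spoil the reduced equations~(\ref{eq:rigid}). If a shift $N'$ contains a corner, then the quotients $N'/\hat u$, $N'/\check u$, $N'/u_\pm$ also contain that corner, so their coefficients vanish by Lemma~\ref{lem:corner2}. Hence the full orbit-sum equations~(\ref{eq:E1}),~(\ref{eq:E2}) for $N_i$ reduce exactly to~(\ref{eq:rigid}). This is precisely the input the proof of Lemma~\ref{lem:chain} uses, so the chain argument goes through verbatim and both parts follow.
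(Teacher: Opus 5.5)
Your proposal is correct and matches the paper's proof: (a) uses the chain $\check u\to\hat u$ and (b) the chain $u_-\to u_+$, and every $N_i$ is rigid by Lemma~\ref{lem:cross} because $|V(N_i)|\geqslant3$. Your remark that shifts containing corners drop out by Lemma~\ref{lem:corner2} is the same justification the paper gives at the start of Step~4, and your alternative derivation of (b) from (a) via the symmetry $t\leftrightarrow x$ is also valid.
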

\begin{proof}
(a) Use the chain $\check u\to\hat u$. All $N_i=\hat uM_i$ contain $\hat u$ and the two given variables
(the replacement $\check u\to\hat u$ does not affect them), so $|V(N_i)|\geqslant3$ and $N_i$ is rigid by Lemma~\ref{lem:cross}.
(b) is the chain $u_-\to u_+$ with $N_i=u_+M_i$.
\end{proof}

After Lemma~\ref{lem:X}, the surviving cross monomials are $u^s$ and $\xi^i\eta^j$ with
$\xi\in\{\check u,\hat u\}$, $\eta\in\{u_-,u_+\}$, $i+j=s$.

\emph{(i) $i\geqslant2$ or $j\geqslant2$.}
Let $i\geqslant2$ and $\xi=\check u$. The monomial $N=\hat u\check u^{i}\eta^j$ has $V(N)=\{\check u,\hat u\}$ or $\{\check u,\hat u,\eta\}$,
so it is rigid by Lemma~\ref{lem:cross}, and (\ref{eq:E1}) gives $\kappa(\check u^i\eta^j)=\kappa(\hat u\check u^{i-1}\eta^j)$.
The right-hand side vanishes by Lemma~\ref{lem:X}(b), since $i-1\geqslant1$.
For $\xi=\hat u$ use $N=\check u\hat u^{i}\eta^j$ in the same way, and for $j\geqslant2$ use (\ref{eq:E2}) and Lemma~\ref{lem:X}(a).
In particular, all these monomials vanish for $s\geqslant3$, since then $i\geqslant2$ or $j\geqslant2$.

\emph{(ii) The monomial $u^s$.}
Take $N=\check u\,u^s$. Besides $N$, the only shift of $N$ with variables in the cross is
$N'=u\,\hat u^s$ (the shift by one step in $t$); other shifts in the stencil contain corners.
Equation~(\ref{eq:E1}) gives $-\kappa(u^s)+\kappa(u\hat u^{s-1})=0$,
and $\kappa(u\hat u^{s-1})=0$ by Lemma~\ref{lem:X}(b), because $s-1\geqslant1$.

\emph{(iii) $s=2$, $M\in\{\check u u_-,\check u u_+,\hat u u_-,\hat u u_+\}$.}
The monomials $\check u\hat uu_-$, $\check uu_-u_+$, $\hat uu_-u_+$ contain three variables and are rigid by Lemma~\ref{lem:cross}.
Equations (\ref{eq:E1}), (\ref{eq:E2}) for them read, respectively,
\[
\kappa(\check uu_-)=\kappa(\hat uu_-),\qquad
\kappa(\check uu_-)=\kappa(\check uu_+),\qquad
\kappa(\hat uu_-)=\kappa(\hat uu_+),
\]
so all four coefficients are equal. For the monomial $N=\check u^2u_-$ the shifts in the stencil with variables in the cross are $N$ and
$N'=u_+^2\hat u$ (the shift by $(1,1)$). Equation~(\ref{eq:E1}) gives $-\kappa(\check uu_-)+\kappa(u_+^2)=0$,
and $\kappa(u_+^2)=0$ by case~(i). Hence all four coefficients vanish.

\smallskip

Thus all coefficients of $\Phi_s$ vanish for every $s\geqslant2$, which proves Theorem~\ref{th:33all}.
\qed

\begin{remark}
The proof uses the small size of the stencil in an essential way. On the $3\times3$ stencil, a monomial whose variables occupy the first and the third rows and the first and the third columns cannot be shifted inside the stencil, so that it is the only monomial of its orbit lying in the stencil, and each orbit condition for it reduces to a simple relation between two coefficients. Such \emph{rigid} monomials serve as starting points of the chains of equalities (for example, in Lemma~\ref{lem:chain}), which eventually force all coefficients to vanish. On larger stencils rigid monomials still exist, but they form only a small part of all monomials: most monomials can be shifted in several ways, the orbits are long, and each orbit condition involves many coefficients at once. The chains starting from rigid monomials then no longer reach all coefficients, and nonzero solutions appear, as in Proposition~\ref{prop:55} for the $5\times5$ stencil (for the $4\times4$ stencil see, however, Remark~\ref{rem:proof4x4}). In particular, the argument above cannot be transferred to larger stencils without substantial changes.
\end{remark}

\begin{remark}
\label{rem:proof4x4}
For the $4\times4$ stencil (Proposition~\ref{prop:44}), it appears that the problem reduces to the $3\times3$ case.
With the multipliers centered at one of the four central nodes, the extra row and the extra column of the stencil
lie on one side only, and the same technique, based on shifts of monomials and chains of equalities between coefficients,
seems to show that all monomials containing variables of the extra row or of the extra column have zero coefficients.
The polynomial $\Phi$ then depends only on the variables of the $3\times3$ stencil, and Theorem~\ref{th:33all} applies,
which would extend Proposition~\ref{prop:44} to polynomials of arbitrary degree.
Preliminary results in this direction are available, but they are not included here in view of the already large volume of the paper.
\end{remark}

\bigskip

\section{Remarks on rational-function approximations}
\label{app:Rational}

As noted in Section~\ref{sec:DirectMethod}, the direct method becomes prohibitively cumbersome for rational schemes. Under certain additional assumptions, however, the problem can sometimes be simplified as follows.

\medskip

Consider the case of a single independent variable and a single dependent variable~$u$. Suppose that the divergence function is sought in the form
\[
F = \frac{P}{Q} = G^+ - G,
\]
where $G$ is an unknown potential, the denominator $Q$ is given, and $P$ is a polynomial with unknown coefficients. Let
\[
G = \frac{A}{B},
\]
where $B$ is also prescribed\footnote{Actually, if $Q$ is given, $B$ can be determined completely.} and chosen so that $Q$ divides $B B^+$, while $A$ is another polynomial with unknown coefficients. Then
\[
A^+ B - A B^+ = \frac{B B^+ P}{Q} = P R,
\]
where $R$ is a polynomial.

Thus, the problem is reduced to a polynomial identity in the variables $u_j$. Equating the coefficients of the monomials yields a linear homogeneous system for the coefficients of $A$ and $P$. Since this system is homogeneous, it always admits trivial solutions corresponding to $F=0$. Therefore, additional constraints can be imposed on the coefficients, depending on the equation being approximated. Typically, some of the coefficients are subject to linear normalization conditions of the form
\[
\sum \alpha_i = 1.
\]
Such conditions make the system inhomogeneous and exclude the trivial solution.

It is also important to take into account that the polynomials $A$ and $B$ are defined on a reduced stencil. For example, if $F$ is defined at the points $(u_m,\ldots,u_{m+k})$, then $A$ and $B$ should be sought on the stencil $(u_m,\ldots,u_{m+k-1})$.

\smallskip

Thus, if the denominator $Q$ is known in advance, the problem can be reduced to a linear one: the denominator $B$ can be determined from the irreducible factors of $Q$, while the coefficients of $A$ and $P$ are obtained from a linear system. For example, when constructing schemes for various shallow-water models in Lagrangian coordinates, assumptions about the denominator can be made based on the analysis of difference invariants or on the form of the underlying differential equation.

In addition, many physically relevant differential equations can be written in the form
\[
u_{tt} + D = 0,
\]
where $D$ is a total derivative with respect to $x$. A divergence approximation for $u_{tt}$ is constructed straightforwardly, and the problem is therefore essentially reduced to the one-dimensional case in $x$ described above. If the approximation of $D$ involves several time layers, the values on each layer can be treated as separate dependent variables; the same reasoning then applies without modification. It should be emphasized that this construction provides a sufficient, but not necessary, condition for the conservativity of the scheme, since this splitting restricts the class of schemes under consideration.

\medskip

The two-dimensional case is not considered here. Similar constructions can be obtained in this case, but the number of unknown potentials increases, and their determination is no longer unique.


\bigskip


\end{document}